\theoremstyle{thmstyleone}%
\newtheorem{theorem}{Theorem}[section]%
\newtheorem{proposition}[theorem]{Proposition}%
\newtheorem{lemma}[theorem]{Lemma}%
\newtheorem{remark}[theorem]{Remark}%
\newtheorem{assumption}[theorem]{Assumption}%
\newcommand{\<}{\langle}
\renewcommand{\>}{\rangle}
\newcommand{\grad}{\nabla}
\newcommand{\hess}{\nabla^2}
\newcommand{\sym}{\mathrm{Sym}}
\newcommand{\NN}{\mathbb{N}}
\newcommand{\ZZ}{\mathbb{Z}}
\newcommand{\RR}{\mathbb{R}}
\renewcommand{\SS}{\mathbb{S}}
\newcommand{\posint}{\ZZ_+}
\newcommand{\PP}{\mathbb{P}}
\newcommand{\EE}{\mathbb{E}}
\newcommand{\sB}{\mathcal{B}}
\newcommand{\sF}{\mathcal{F}}
\newcommand{\sL}{\mathcal{L}}
\newcommand{\sU}{\mathcal{U}}
\newcommand{\diff}{{\mathrm d}}
\newcommand{\ds}{\diff s}
\newcommand{\dx}{\diff x}
\newcommand{\dt}{\diff t}
\newcommand{\dr}{\diff r}
\newcommand{\dbs}{\diff B_{s}}
\newcommand{\ind}{\mathbf{1}}
\newcommand{\id}{\mathbf{I}}
\newcommand{\lipone}{\mathrm{Lip}}
\newcommand{\ig}{\text{L}}
\newcommand{\op}{\mathrm{op}}
\newcommand{\smoothlip}{\mathrm{Lip}^{\infty}_b}
\newcommand{\indicator}{\mathbf{1}}
\newcommand{\transpose}{\mathrm{T}}
\newcommand{\hs}{{}}
\newcommand{\trace}{\text{Tr}}
\newcommand{\wass}{d}
\newcommand{\sphere}{\SS^{N-1}}
\newcommand{\magn}{S_n}
\begin{document}

\title[Article Title]{The rate of convergence of the critical mean-field $O(N)$ magnetization via multivariate nonnormal Stein's method}

\author[1]{\fnm{Timothy M.} \sur{Garoni}}\email{tim.garoni@monash.edu}
\author[1]{\fnm{Aram} \sur{Perez}}\email{aram.perez@monash.edu}
\author[1]{\fnm{Zongzheng} \sur{Zhou}}\email{eric.zhou@monash.edu}
\affil[1]{\orgdiv{School of Mathematics}, \orgname{Monash University}, \orgaddress{\city{Clayton}, \postcode{3800}, \state{VIC},
    \country{Australia}}} 

\abstract{
  We study the distribution of the magnetization of the critical mean-field $O(N)$ model with $N\ge2$. Specifically, we 
  bound the Wasserstein distance between the finite-volume and limiting distributions, in terms of the number of spins. To achieve this, we extend a recent
  multivariate nonnormal approximation theorem. This generalizes known results for the Curie-Weiss magnetization to the
  multivariate $O(N)$ setting.
}

\keywords{mean field, spin model, Stein's method}

\maketitle

\section{Introduction}
The $O(N)$ model was introduced in~\cite{Stanley1968} as a natural generalization of the Ising, $XY$ and Heisenberg
models. For integer $N,n\ge1$, let $\sphere:=\{x\in\RR^N:|x|=1\}$ denote the set of Euclidean unit vectors in $\RR^N$, and let $P_{N,n}$
denote the $n$-fold product of uniform measure on $\sphere$. The mean-field $O(N)$ model is then defined for $\beta\ge0$ by the probability
measure $\PP_{N,n,\beta}$ whose density with respect to $P_{N,n}$ is proportional to
\begin{equation}
  \exp\left(\frac{\beta}{2n}\sum_{i,j=1}^n \sigma_i\cdot\sigma_j\right), \qquad \sigma\in (\sphere)^n.
  \label{eq:mean-field O(N) measure}
\end{equation}

It is known that, with respect to the mean-field measure~\eqref{eq:mean-field O(N) measure}, the magnetization
\begin{equation}
  \label{eq:magnetization definition}
\magn(\sigma):=\sum_{i=1}^n\sigma_i
\end{equation}
obeys a law of large numbers iff $\beta\le N$; see~\cite{EllisNewman1978,Ellis1985} for the $N=1$ case, \cite{KirkpatrickMeckes2013} for
the $N=3$ case, and~\cite{KirkpatrickNawaz2016} for the general case of $N\ge2$.
In the subcritical phase, $\beta<N$, it is also known~\cite{EllisNewman1978,Ellis1985,KirkpatrickMeckes2013,KirkpatrickNawaz2016} that
$S_n/\sqrt{n}$ obeys a Gaussian central limit theorem. By contrast, at the critical point, $\beta=N$,
it has long been known that $n^{-3/4}\,S_n$ converges weakly to a random vector in $\RR^N$ with Lebesgue density
proportional to $\exp(-a_N |x|^4)$, where $a_N:=N^2/(4N+8)$. This was established for $N=1$ in~\cite{SimonGriffiths1973},
and then generalised to $N\ge2$ in~\cite{DunlopNewman1975}.
Analogous limit theorems have been subsequently established for other mean-field spin models, including the Curie-Weiss-Potts
model~\cite{EllisWang1990}, and Blume-Emery-Griffiths model~\cite{EllisOttoTouchette2005}.

A natural question associated with a given limit theorem is to determine its rate of convergence. It was shown
in~\cite{EichelsbacherLowe2010} via Stein's method, that for the $N=1$ model with $\beta<1$, the Kolmogorov distance
between the distribution of $\sqrt{1-\beta}\, S_n/\sqrt{n}$ and a standard univariate normal is bounded above by
$c(\beta)/\sqrt{n}$. Utilising a multivariate generalisation of Stein's method established
in~\cite{ChatterjeeMeckes2008,ReinertRollin2009,Meckes2009}, analogous results~\cite{KirkpatrickMeckes2013,KirkpatrickNawaz2016} were then
shown to hold in the disordered phase, $\beta<N$, of the $O(N)$ model for any $N\ge2$.

In the critical case, it was shown in~\cite{EichelsbacherLowe2010,ChatterjeeShao2011} using nonnormal extensions of Stein's method,
that for the $N=1$ model with $\beta=1$ the Kolmogorov distance between the distribution of $n^{-3/4}\, S_n$ and the probability
measure with Lebesgue density proportional to $\exp(-x^4/12)$ is again bounded above by $c/\sqrt{n}$. We establish here a natural
multivariate generalisation of this result to all $N\ge 2$.

Our method relies on a general multivariate nonnormal Stein's method framework established
in~\cite{FangShaoXu2019,FangShaoXu2019correction}, which in turn can be viewed as a multivariate generalization
of~\cite{ChatterjeeShao2011}. 
As explicitly noted in~\cite{FangShaoXu2019}, however, their underlying assumptions
do not hold for densities of the form $\exp(-a|x|^m)$ with $m>2$, and so their results cannot be applied to the critical mean-field $O(N)$
model. Nevertheless, as shown here, a generalization of~\cite[Theorem 2.5]{FangShaoXu2019} can be established which does indeed cover the
case of the critical $O(N)$ model. We present this result in Theorem~\ref{Theorem: main result} of Section~\ref{sec: Stein's method}.

The distributional bounds we present are in terms of the Wasserstein distance. The Wasserstein distance between the laws $\sL(X)$, $\sL(Y)$
of two random vectors $X,Y\in\RR^d$ is 
\begin{equation}
  \wass(\sL(X),\sL(Y)):= \inf_{(X,Y)}\EE|X-Y| = \sup_{h \in \lipone}\left|\EE\, h(X) - \EE\, h(Y)\right|
  \label{eq: Wasserstein definition}
\end{equation}
where the infimum is taken over all couplings of $\sL(X)$ and $\sL(Y)$, and $\lipone$ denotes the set of all functions $h:\RR^d\to\RR$ such
that $|h(x)-h(y)|\le |x-y|$ for all $x,y\in\RR^d$. We note that convergence in Wasserstein distance implies weak convergence; see
e.g.~\cite{GibbsSu2002}.  

In the disordered phase, the bounds originally presented in~\cite{KirkpatrickMeckes2013,KirkpatrickNawaz2016} were not in terms of the
Wasserstein distance, but instead in terms of an integral probability metric defined by a smaller, smoother, class of test functions. These
bounds were proved by applying a general multivariate normal approximation theorem presented in~\cite{Meckes2009}. However, recent results
giving multivariate normal approximation in the Wasserstein distance allow the results from~\cite{KirkpatrickNawaz2016} to be immediately
sharpened to the Wasserstein distance. Indeed, substituting results given in~\cite[Lemmas 1 and 2]{KirkpatrickNawaz2016} into~\cite[Theorem
  1.1]{FangKoike2022} yields the following. 
\begin{theorem}[\cite{KirkpatrickMeckes2013,KirkpatrickNawaz2016,FangKoike2022}]
  \label{Theorem: high temp O(N)}
  Fix $N \geq 2$ and let $W_{n} := \sqrt{N-\beta}\,S_{n}/\sqrt{n}$ and $Z \sim N\left(0, \id\right)$, where $\id\in\RR^{d\times d}$ is the
  identity matrix. For any $\beta < N$ there exists $c(N,\beta)<\infty$ such that
    \begin{equation}
         \wass(\sL(W_n),\sL(Z)) \leq \frac{c(N,\beta)}{\sqrt{n}}.
    \end{equation}
\end{theorem}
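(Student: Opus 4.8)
The strategy is to place $W_n$ in the exchangeable-pair framework for multivariate normal approximation and then invoke the Wasserstein bound of~\cite{FangKoike2022}. Following~\cite{KirkpatrickMeckes2013,KirkpatrickNawaz2016}, build an exchangeable pair $(\sigma,\sigma')$ under $\PP_{N,n,\beta}$ by drawing an index $I$ uniformly from $\{1,\dots,n\}$ and replacing $\sigma_I$ by an independent draw $\sigma_I'$ from the conditional law of $\sigma_I$ given $(\sigma_j)_{j\neq I}$; since the $\sigma_I$-dependent part of the energy in~\eqref{eq:mean-field O(N) measure} is $\tfrac{\beta}{n}\,\sigma_I\cdot(\magn-\sigma_I)$, this conditional law has density on $\sphere$ proportional to $\exp\!\big(\tfrac{\beta}{n}\,u\cdot(\magn-\sigma_I)\big)$. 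Set $W_n':=\sqrt{N-\beta}\,\magn(\sigma')/\sqrt{n}$. The pair $(W_n,W_n')$ is exchangeable, and because a single-coordinate update changes $\magn$ by at most $2$ in Euclidean norm we have the deterministic bound $|W_n'-W_n|\le 2\sqrt{N-\beta}/\sqrt{n}$.

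The two inputs required by the method are approximate regression identities for the first two conditional moments of $W_n'-W_n$. Taylor-expanding the spherical integrals above --- legitimate because $(\beta/n)|\magn|$ is typically of order $n^{-1/2}$, and $\int_{\sphere}u\,u^{\transpose}\,\mathrm{d}u=\tfrac1N\,\id$ --- gives $\EE[\sigma_I\mid(\sigma_j)_{j\neq I}]=\tfrac{\beta}{nN}(\magn-\sigma_I)+O\big((|\magn|/n)^{3}\big)$. Averaging over $I$ and using $\sum_i(\magn-\sigma_i)=(n-1)\magn$ yields, with $\lambda:=\tfrac{N-\beta}{nN}$,
\begin{equation*}
  \EE[\,W_n'-W_n\mid\sigma\,]=-\lambda\,W_n+R_1,
\end{equation*}
and a parallel computation of the second moments gives
\begin{equation*}
  \EE[(W_n'-W_n)(W_n'-W_n)^{\transpose}\mid\sigma]=2\lambda\,\id+R_2 .
\end{equation*}
Controlling the remainders $R_1,R_2$ requires the subcritical moment bounds $\EE|\magn|^{k}\le C_k(N,\beta)\,n^{k/2}$ together with the self-averaging estimate $\tfrac1n\sum_i\sigma_i\sigma_i^{\transpose}\approx\tfrac1N\,\id$; these are exactly the estimates recorded in~\cite[Lemmas 1 and 2]{KirkpatrickNawaz2016}, and they show $\EE\|R_1\|$ and $\EE\|R_2\|$ (Hilbert--Schmidt norm) are $O(\lambda/\sqrt{n})$.

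It then remains to substitute these quantities into the general exchangeable-pair Wasserstein estimate~\cite[Theorem 1.1]{FangKoike2022}, which bounds $\wass(\sL(W_n),\sL(Z))$ by a constant (depending only on $N$) times a sum of $\lambda^{-1}\,\EE\|\EE[(W_n'-W_n)(W_n'-W_n)^{\transpose}\mid\sigma]-2\lambda\,\id\|$, a term controlled by $\lambda^{-1}\EE\|R_1\|$, and a third-moment term dominated by $\lambda^{-1}\,\EE|W_n'-W_n|^{3}$. The deterministic bound on $|W_n'-W_n|$ makes the last term $O(\lambda^{-1}n^{-3/2})=O(n^{-1/2})$, while $\lambda^{-1}\EE\|R_1\|$ and $\lambda^{-1}\EE\|R_2\|$ are $O(n^{-1/2})$ by the previous paragraph; gathering the $N$- and $\beta$-dependent constants gives the claimed $c(N,\beta)/\sqrt{n}$. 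The only genuine work --- all of it already carried out in~\cite{KirkpatrickNawaz2016} --- is the bookkeeping of the Taylor remainders and the subcritical moment bounds on $\magn$, whose constants degenerate as $\beta\uparrow N$ and thereby force $c(N,\beta)\to\infty$ at criticality; given those inputs, the upgrade from the smooth-test-function metric originally used in~\cite{KirkpatrickNawaz2016} to the Wasserstein distance is immediate from~\cite{FangKoike2022}.
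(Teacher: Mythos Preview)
Your proposal is correct and follows exactly the route the paper indicates: the paper does not give a detailed proof but simply states that substituting the exchangeable-pair estimates of \cite[Lemmas~1 and~2]{KirkpatrickNawaz2016} into \cite[Theorem~1.1]{FangKoike2022} yields the result, and your write-up fleshes out precisely this substitution.
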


Our main result in the current work is the analogous multivariate nonnormal approximation holding at criticality.
\begin{theorem}
  \label{Theorem: Critical O(N)}
  Fix $N \geq 2$ and let $W_{n} := n^{-3/4}\,S_{n}$. If $\beta=N$, then there exists $c(N)<\infty$ such that
    \begin{equation}
        \wass(\sL(W_n),\mu) \leq \frac{c(N)}{\sqrt{n}},
    \end{equation}
  where $\mu$ has Lebesgue density proportional to $\exp\left(-a_N |x|^{4}\right)$ and $a_N:=N^2/(4N+8)$.
\end{theorem}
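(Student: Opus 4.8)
The plan is to apply Theorem~\ref{Theorem: main result} with target measure $\mu\propto\exp(-V)$, where $V(x):=a_N|x|^4$ (so that $\grad V(x)=\tfrac{N^2}{N+2}|x|^2x$), using the exchangeable pair obtained by resampling a single spin. Let $\sigma=(\sigma_1,\dots,\sigma_n)\sim\PP_{N,n,N}$, let $I$ be uniform on $\{1,\dots,n\}$ and independent of $\sigma$, let $\sigma_I'$ be an independent draw from the conditional law of $\sigma_I$ given $(\sigma_j)_{j\neq I}$, and set $\sigma':=(\sigma_1,\dots,\sigma_{I-1},\sigma_I',\sigma_{I+1},\dots,\sigma_n)$, $\magn':=\magn(\sigma')$ and $W_n':=n^{-3/4}\magn'$. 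Then $(W_n,W_n')$ is exchangeable and $\magn'-\magn=\sigma_I'-\sigma_I$, so $|W_n'-W_n|\le 2n^{-3/4}$ almost surely.

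The key structural input is that, under~\eqref{eq:mean-field O(N) measure}, the conditional law of $\sigma_i$ given $(\sigma_j)_{j\neq i}$ is von Mises--Fisher on $\sphere$ with natural parameter $v_i:=\tfrac{\beta}{n}(\magn-\sigma_i)$; thus, writing $A_N(\kappa):=I_{N/2}(\kappa)/I_{N/2-1}(\kappa)$,
\begin{equation*}
  \EE[\sigma_i\mid(\sigma_j)_{j\neq i}]=A_N(|v_i|)\,\frac{v_i}{|v_i|},\qquad A_N(\kappa)=\frac{\kappa}{N}-\frac{\kappa^3}{N^2(N+2)}+O(\kappa^5),
\end{equation*}
and $\EE[\sigma_i\sigma_i^{\transpose}\mid(\sigma_j)_{j\neq i}]=\tfrac1N\id+O(|v_i|^2)$. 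Using these, the deterministic bound $|v_i|\le\tfrac{\beta}{n}(|\magn|+1)$, the identity $\sum_i(\magn-\sigma_i)=(n-1)\magn$, and the $O(N)$-invariance of~\eqref{eq:mean-field O(N) measure} (which forces $\EE[\sigma_i\sigma_i^{\transpose}]=\tfrac1N\id$), averaging over $i$ and setting $\beta=N$ yields
\begin{equation*}
  \EE[W_n'-W_n\mid\sigma]=-\lambda\,\grad V(W_n)+R_n,\qquad \EE\!\left[(W_n'-W_n)(W_n'-W_n)^{\transpose}\mid\sigma\right]=2\lambda\,\id+\widetilde R_n,
\end{equation*}
with $\lambda:=n^{-3/2}/N$. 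Here the non-degenerate cubic drift $-\lambda\grad V$ is produced precisely by the $\kappa^3$-term of $A_N$, while the linear part $\tfrac{n-1}{n^2}\magn$ of $\tfrac1n\sum_i\EE[\sigma_i\mid\cdot]$ cancels the $\tfrac1n\magn$ coming from $-\tfrac1n\sum_i\sigma_i$ up to a residual $-n^{-2}\magn$ --- and it is this cancellation that uses $\beta=N$. Accordingly, $R_n$ consists of the residual linear term $-n^{-2}W_n$, the $O(\kappa^5)$ Bessel remainder, and the error of replacing $|\magn-\sigma_i|^2(\magn-\sigma_i)$ by $|\magn|^2\magn$, each bounded by $n^{-2}$ times a fixed power of $1+|W_n|$; $\widetilde R_n$ similarly collects the $O(|v_i|)$ cross-terms $A_N(|v_i|)\hat v_i\sigma_i^{\transpose}$ (whose $i$-average telescopes, gaining a factor $n$), the $O(|v_i|^2)$ terms, and the fluctuation of $\tfrac1n\sum_i\sigma_i\sigma_i^{\transpose}$ about $\tfrac1N\id$.

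Feeding these estimates into Theorem~\ref{Theorem: main result} bounds $\wass(\sL(W_n),\mu)$ by a sum of terms of the form $\lambda^{-1}\EE[\,|R_n|\,(1+|W_n|)^{p}]$, $\lambda^{-1}\EE[\,\|\widetilde R_n\|\,(1+|W_n|)^{p}]$ and $\lambda^{-1}\EE|W_n'-W_n|^3$ for finitely many small integers $p$; it is exactly the polynomial weights $(1+|W_n|)^{p}$ --- not admissible under the hypotheses of~\cite{FangShaoXu2019}, since $\grad V$ fails to be Lipschitz --- that make the generalization in Theorem~\ref{Theorem: main result} necessary. As $\lambda$ is of order $n^{-3/2}$ and $|W_n'-W_n|\le 2n^{-3/4}$, the third term is $O(n^{-3/4})$; the first two are $O(n^{-1/2})$, the rate being governed by the residual linear drift $n^{-2}W_n$, provided one has uniform moment bounds $\sup_n\EE|W_n|^{p}<\infty$ for those finitely many $p$.

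Establishing these moment bounds is the main obstacle, since they do not follow from the known weak convergence, and I expect it to be the most delicate step. I would obtain them either from the Hubbard--Stratonovich representation of~\eqref{eq:mean-field O(N) measure} --- conditionally on an auxiliary Gaussian $Z\in\RR^N$ the spins are i.i.d.\ von Mises--Fisher with parameter $\sqrt{N/n}\,Z$, and at $\beta=N$ the induced density of $Z$ is proportional to $\exp\!\big(-|z|^4/(4(N+2)n)+O(|z|^6/n^2)\big)$, so that $W_n$ inherits uniformly bounded sub-quartic-exponential tails --- or, self-containedly, from an exchangeable-pair concentration inequality for $(W_n,W_n')$: because $\langle x,\grad V(x)\rangle=\tfrac{N^2}{N+2}|x|^4$ furnishes a strong inward drift, the scheme used in the univariate case~\cite{EichelsbacherLowe2010,ChatterjeeShao2011} adapts to give $\sup_n\EE\exp(c_N|W_n|^4)<\infty$ and hence all polynomial moments. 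Either route must handle the event where $|\magn|$ is atypically large, on which the Taylor expansions of $A_N$ are replaced by the crude bounds $0\le A_N\le1$ and $\trace(\sigma_i\sigma_i^{\transpose})=1$; this is bookkeeping rather than a genuine difficulty. With the moment bounds in hand, every remainder is $O(n^{-1/2})$, and Theorem~\ref{Theorem: main result} delivers $\wass(\sL(W_n),\mu)\le c(N)\,n^{-1/2}$.
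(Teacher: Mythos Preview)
Your overall strategy is correct and matches the paper's: apply Theorem~\ref{Theorem: main result} with $V(x)=a_N|x|^4$, use the Glauber single-spin-resample exchangeable pair, compute the conditional first and second moments via the von Mises--Fisher structure (the paper writes these in terms of functions $f_N,g_N$ built from modified Bessel ratios, equivalent to your $A_N$), and bound each remainder by $O(n^{-1/2})$ using polynomial moment bounds on $|W_n|$.

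However, you have misread the conclusion of Theorem~\ref{Theorem: main result}: the bound it gives is simply
\[
C\Bigl(\lambda^{-1}\EE\bigl[|\delta|^3(|\log|\delta||\vee 1)\bigr]+\EE|R_1|+\sqrt{d}\,\EE\|R_2\|\Bigr),
\]
with \emph{no} polynomial weights $(1+|W_n|)^p$ multiplying the remainders. The reason the generalization beyond~\cite{FangShaoXu2019} is needed is not any such weights, but that their hypotheses require a uniform lower bound $\hess V\ge c\,\id$, which fails for $V(x)=a_N|x|^4$ at $x=0$; Assumption~\ref{assumption: assumptions on V} weakens this to allow $\rho(x)=4a_N|x|^2$ vanishing at the origin. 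Powers of $|W_n|$ enter only because the remainders $R_1,R_2$ themselves contain them.

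You also misjudge the relative difficulty of the pieces. The uniform moment bounds $\sup_n\EE|W_n|^p<\infty$ are not the main obstacle: they are an immediate corollary of the classical limit theorem of Dunlop--Newman~\cite{DunlopNewman1975} (Proposition~\ref{prop: mgf convergence} in the paper), and need no separate Hubbard--Stratonovich or concentration argument. By contrast, the step you pass over as ``the fluctuation of $\tfrac1n\sum_i\sigma_i\sigma_i^\transpose$ about $\tfrac1N\id$'' is in fact the most involved part of the paper's proof. $O(N)$-invariance gives only $\EE[\sigma_i\sigma_i^\transpose]=\tfrac1N\id$, but bounding the expected Hilbert--Schmidt norm of $\tfrac{N}{2n}\sum_i\sigma_i\sigma_i^\transpose-\tfrac12\id$ by $O(n^{-1/2})$ requires showing $\EE[\langle\sigma_i,\sigma_j\rangle^2]=\tfrac1N+O(n^{-1})$ for $i\neq j$; the paper does this by conditioning first on $(\sigma_k)_{k\neq i}$ and then again on $(\sigma_k)_{k\neq j}$, carefully tracking the resulting $g_N(|m_{(\cdot)}|)$ factors and the error in swapping $m_{(i)}$ for $m_{(j)}$.
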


\begin{remark}
As a corollary of Theorem~\ref{Theorem: Critical O(N)} it follows that for any $N\ge2$, the Wasserstein distance between the distribution of
$n^{-3/4}\,|S_n|$ and the probability measure with Lebesgue density proportional to $r^{N-1}\exp(-a_N r^4)$, is again bounded above by
$c(N)/\sqrt{n}$. This is analogous to the univariate nonnormal approximation presented in~\cite[Theorem 6]{KirkpatrickNawaz2016} for
$|S_n|^2$, however the bound given there is not in terms of Wasserstein distance, but in terms of an integral probability metric defined by
a smaller class of test functions, and contains an additional logarithmic factor.
\end{remark}

The remainder of this paper is organised as follows. Section~\ref{sec: Stein's method} introduces the relevant background on Stein's method, and
provides the statement of our main result on Wasserstein approximation, Theorem~\ref{Theorem: main result}. In Section \ref{sec: O(N) model}
we apply Theorem~\ref{Theorem: main result} to prove Theorem~\ref{Theorem: Critical O(N)}. 
Section~\ref{sec: sdes and semigroups} provides preliminary results related to a class of stochastic differential equations, and their
related stochastic semigroups, which provide the framework for the proof of Theorem \ref{Theorem: main result} given in Section \ref{sec: proof of main
  result}. The key result used in this proof is a bound on the derivatives of the relevant stochastic semigroups.
These bounds are proved in Section~\ref{sec: semigroup bounds}, using Elworthy-Li formulae and
bounds on the spatial derivatives of the solutions to the corresponding stochastic differential equation
proved in Section~\ref{sec: variation process bounds}.
Finally, the appendix contains the proof of a proposition stated in Section~\ref{sec: sdes and semigroups}.

\subsection{Notation}
\label{subsec:notation}
We will denote the set of positive integers by $\posint$, and the set of natural numbers by $\NN:=\posint\cup\{0\}$.
For $d\in\posint$ and $x,y\in \RR^{d}$, we let $\<x,y\>$ denote the Euclidean inner product, and set $|x| = \sqrt{\<x,x\>}$. For two
matrices $A, B \in \RR^{d \times d}$ we let $\<A,B\>_{\hs} = \sum_{i, j = 1}^{d}A_{i, j}B_{i, j}$ denote the Hilbert-Schmidt inner product,
and denote the corresponding norm by $\|A\|_{\hs} = \sqrt{\< A, A \>_{\hs}} = \sqrt{\trace(A^{\transpose}A)}$. We will denote the operator
norm of $A\in\RR^{d\times d}$ by $\|A\|_{\op}:=\sup_{x\in\SS^{d-1}}|Ax|$.

For open sets $U_1\subseteq \RR^\ell$ and $U_2\subseteq\RR^m$ we let $C^{k}(U_1;U_2)$ denote the set of $k$-times continuously differentiable
$f:U_1\to U_2$. If $U_2=\RR$ we abbreviate $C^k(U_1):=C^k(U_1,U_2)$, and if additionally $U_1=\RR^d$ we write simply $C^k$.
We let $B_b$ denote the
space of $f:\RR^d\to\RR$ which are bounded and Borel measurable, and let $C_b$ denote the subspace of $B_b$ of uniformly continuous such functions. We will then make
use of the convenient abbreviation $\smoothlip:=\lipone\cap C^{\infty}\cap C_b$.

For $f\in C^1(\RR^d;\RR)$, we let $D_u f(x)$ denote the directional derivative of $f$ in the direction $u\in\RR^d$, and we extend the
definition to $f\in C^1(\RR^d;\RR^m)$, by then defining $D_u f(x)$ entrywise. For given $f\in C^3(\RR^d;\RR^m)$ and $u,v,w,x\in\RR^d$ we
define
\begin{equation}
  \label{eq: D notation}
  \begin{split}
    Df(x)[u]&:= D_u f(x)\\
    D^2f(x)[u,v]&:=D_v D_u f(x)\\
    D^3f(x)[u,v,w]&:=D_w D_v D_u f(x)
  \end{split}
\end{equation}
We note that for each given $x\in\RR^d$ and $i=1,2,3$, the map $D^i g(x) : (\RR^d)^i\to\RR^d$ is multilinear and symmetric.

For $f \in C^{1}(\RR^d;\RR)$, we let $\grad f$ denote the gradient of $f$, so that
\begin{equation}
  Df(x)[u]=\<\grad f(x),u\>,
\end{equation}
and set $\|\grad f\|_\infty:=\sup_{x \in \RR^{d}}|\grad f(x)|$. 
Finally, for $f \in C^{2}(\RR^{d};\RR)$, we let $\hess f$ denote the Hessian of $f$,
so that
\begin{equation}
D^2 f(x)[u,v] =\<u,\hess f(x) v\>,
\end{equation}
and let $\Delta$ denote the Laplacian.

\section{Stein's Method}
\label{sec: Stein's method}
Our goal in this section is to describe bounds in Wasserstein distance between the distribution of an $\RR^d$-valued random vector and 
the probability measure with Lebesgue density proportional to $e^{-V}$, for sufficiently well-behaved $V$. Specifically,
we focus on $V$ which satisfy the following.

\begin{assumption}\label{assumption: assumptions on V}
  The function $V$ belongs to $C^{4}(\RR^{d}, \RR)$, and there exist constants $M_{1},M_{2}\in(0,\infty)$ and a continuous
  function $\rho : \RR^{d} \to \RR$ satisfying
  \begin{equation}
    \label{eq: rho bounds}
    c_1\,|x|^{k}\,\ind(|x|\geq B) \leq \rho(x) \leq c_2\,|x|^{k}, \qquad \forall\, x\in\RR^d,
  \end{equation}
  for constants $k\in[0,\infty)$, $B\in[0,\infty)$ and $c_1, c_2\in(0,\infty)$, such that for all $x\in\RR^d$ and all nonzero $u,v,w\in\RR^d$:
    \begin{align}\label{eq: Assumption 1}
        \< u, \hess V(x)  u\> &\geq \rho(x)|u|^{2},\\
        |D_{v} D_{u} \grad V(x)| &\leq M_{1} \frac{1 + \rho(x)}{1+|x|}|u||v|, \label{eq: Assumption 2}\\
        |D_{w} D_{v} D_{u} \grad V(x)| &\leq M_{2} \frac{1 + \rho(x)}{(1+|x|)^{2}}|u||v||w|. \label{eq: Assumption 3}
    \end{align}
    If $k=0$, there additionally exists $M_0\in(0,\infty)$ such that
    \begin{equation}
      \label{eq: Assumption 0}
      |D_{u} \grad V(x)| \leq M_{0}|u|.
    \end{equation}
\end{assumption}

We will prove the following general theorem in Wasserstein approximation.
\begin{theorem}\label{Theorem: main result}
  Suppose $V$ satisfies Assumption \ref{assumption: assumptions on V}, and let $\mu$ have Lebesgue density proportional to
  $e^{-V}$.
  Let $W,W'$ be identically distributed $\RR^d$-valued random variables, defined on the same probability space, such that if $\delta:=W'-W$ then
  $W$ and $\delta\delta^\transpose$ are both integrable. 
  Let $\sF\supseteq\sigma(W)$ be a $\sigma$-algebra, let $\lambda>0$, and define $R_1$ and $R_2$ so that
    \begin{equation}
      \mathbb{E}[\delta|\mathcal{F}] = \lambda(-\grad V(W) + R_{1}),
      \label{eq: Regression Property}
    \end{equation}
    and
    \begin{equation}
      \mathbb{E}\left[\delta\,\delta^{\transpose}\big|\mathcal{F}\right] = 2\lambda(\id + R_{2}).
      \label{eq: Correlation Property}
    \end{equation}
    Then there exists a constant $C\in(0,\infty)$ such that
    \begin{equation}
      d(\sL(W),\mu)
      \leq C\left\lbrace \frac{1}{\lambda}\mathbb{E} [|\delta|^{3}(|\log|\delta|| \vee 1)] + \mathbb{E}|R_{1}|
      + \sqrt{d}\,\mathbb{E}\,\|R_{2}\| \right\rbrace. 
    \end{equation}
\end{theorem}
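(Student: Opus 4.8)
The plan is to follow the by-now-standard route for nonnormal Stein's method via exchangeable pairs, as in \cite{ChatterjeeShao2011} and its multivariate extension \cite{FangShaoXu2019}, but to carry along the weaker structural hypotheses of Assumption \ref{assumption: assumptions on V} (in particular allowing $\rho$ to grow like $|x|^k$ with $k>0$, which is the source of the critical $|x|^4$ density). First I would set up the Stein equation: given a test function $h\in\lipone$, one seeks $f=f_h$ solving
\begin{equation}
  \Delta f(x) - \<\grad V(x),\grad f(x)\> = h(x) - \mu(h),
\end{equation}
i.e. $f$ is given by the action of the resolvent/semigroup of the diffusion with invariant measure $\mu$, namely $\diff X_t = -\grad V(X_t)\,\dt + \sqrt 2\,\dbt$. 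The whole argument then rests on good bounds for the first three derivatives of $f_h$ in terms of $\|h\|_{\lipone}$; these are exactly the "bound on the derivatives of the relevant stochastic semigroups" promised in the introduction and proved in Section~\ref{sec: semigroup bounds}. So I would state up front the key estimate (to be taken from the later sections): there is $C<\infty$ with
\begin{equation}
  \|\grad f_h\|_\infty \le C,\qquad \sup_x (1+|x|)\,\|\hess f_h(x)\|_{\op}\le C,\qquad \sup_x (1+|x|)^2\,|D^3 f_h(x)|\le C .
\end{equation}
The decay factors $(1+|x|)^{-1}$ and $(1+|x|)^{-2}$ are what make the method work despite $\grad V$ being unbounded, and they are dictated by the matching decay factors built into \eqref{eq: Assumption 2}--\eqref{eq: Assumption 3}.

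Next I would run the exchangeable-pair computation. Write $\delta = W'-W$ and, using that $W,W'$ are identically distributed, expand
\begin{equation}
  0 = \EE\big[ f(W') - f(W) + (\text{antisymmetric term}) \big]
\end{equation}
more precisely, for the function $f=f_h$ one uses the identity $\EE[f(W')-f(W)]=0$ together with a Taylor expansion of $f(W')=f(W+\delta)$ to third order:
\begin{equation}
  0 = \EE\Big[\<\grad f(W),\delta\> + \tfrac12 D^2 f(W)[\delta,\delta] + \tfrac16 D^3 f(W)[\delta,\delta,\delta] + \text{Taylor remainder}\Big].
\end{equation}
Now condition on $\sF$ and insert the regression hypotheses \eqref{eq: Regression Property} and \eqref{eq: Correlation Property}: the linear term contributes $\lambda\,\EE\<\grad f(W),-\grad V(W)+R_1\>$, and the quadratic term contributes $\lambda\,\EE\big[\Delta f(W) + D^2f(W)[R_2\text{-part}]\big]$ since $\EE[\delta\delta^\transpose\mid\sF]=2\lambda(\id+R_2)$ and $D^2f(x)[u,u]=\<u,\hess f(x)u\>$ so the $\id$ term reproduces $\lambda\Delta f$. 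Dividing by $\lambda$ and rearranging, the "main" terms assemble into $\EE[\Delta f(W)-\<\grad V(W),\grad f(W)\>]=\EE[h(W)]-\mu(h)$, which is what we want to bound, and everything else is an error term:
\begin{equation}
  \big|\EE h(W)-\mu(h)\big| \le \big|\EE\<\grad f(W),R_1\>\big| + \big|\EE\,D^2f(W)[R_2]\big| + \tfrac{1}{\lambda}\big|\EE[\text{cubic} + \text{remainder}]\big|.
\end{equation}
The first term is bounded by $\|\grad f\|_\infty\,\EE|R_1|\le C\,\EE|R_1|$; the second by $\|\hess f\|\,\EE\|R_2\|$, and here the factor $\sqrt d$ enters because $D^2f(W)[R_2]=\<R_2,\hess f(W)\>_{\hs}$ is controlled by $\|R_2\|_{\hs}\,\|\hess f\|_{\hs}\le \sqrt d\,\|R_2\|_{\hs}\,\|\hess f\|_{\op}$; the third, cubic, term is bounded by $\tfrac{C}{\lambda}\,\EE[|\delta|^3]$ using $\sup_x|D^3f(x)|\le C$. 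Taking the supremum over $h\in\lipone$ and invoking \eqref{eq: Wasserstein definition} gives the claimed bound — except for the logarithmic factor, which I address last.

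The main obstacle, and the reason the $|\log|\delta||\vee 1$ appears, is the Taylor remainder together with the unboundedness of $\grad V$. Because the third derivative bound on $f_h$ only holds with the decay factor $(1+|x|)^{-2}$ (and one cannot do better for $m>2$, exactly the point on which \cite{FangShaoXu2019} stalls), the naive remainder estimate $|D^3f(W+\theta\delta)-D^3f(W)|$ does not close: controlling $D^3 f$ on the segment $[W,W+\delta]$ requires a fourth-derivative bound on $f_h$ that degrades, and smoothing $h$ (replacing $h\in\lipone$ by a mollified $h_\varepsilon$ and optimizing over $\varepsilon$) produces a $\log(1/\varepsilon)\sim\log(1/|\delta|)$ loss after optimization. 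So the technical heart is: (i) regularize $h$ at scale $\varepsilon$, pay $\|\grad h_\varepsilon\|,\dots$ blowing up polynomially in $1/\varepsilon$ while $\|h-h_\varepsilon\|_\infty\lesssim\varepsilon$; (ii) run the above expansion with $f_{h_\varepsilon}$, bounding the remainder via the degraded higher-derivative semigroup estimates from Section~\ref{sec: semigroup bounds}; (iii) choose $\varepsilon\asymp|\delta|$ pointwise (or in an integrated sense) to balance the $\varepsilon$-error against the $1/\varepsilon$-blowup, which is precisely what converts $\EE|\delta|^3$ into $\EE[|\delta|^3(|\log|\delta||\vee 1)]$. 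The $k=0$ case is easier: there $\rho$ is bounded below by a positive constant (a uniform convexity / spectral gap), \eqref{eq: Assumption 0} gives a uniform Hessian bound on $V$, and the semigroup derivative bounds hold without the $\log$ correction — but to keep a single statement we absorb that case into the same estimate. I would therefore structure the proof as: (1) record the semigroup/Stein-factor bounds (citing Sections~\ref{sec: semigroup bounds}--\ref{sec: variation process bounds}); (2) the exchangeable-pair Taylor expansion and identification of main vs.\ error terms; (3) the smoothing argument and optimization producing the logarithmic factor; (4) collect constants. I expect step (3) to be where essentially all the work and all the novelty relative to \cite{FangShaoXu2019} lies.
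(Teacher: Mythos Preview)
Your high-level outline (Stein equation via the Langevin semigroup, exchangeable-pair Taylor expansion, semigroup derivative bounds) matches the paper, but two key technical points are off and would cause the argument to fail as written.

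First, the ``Stein factor'' bounds you posit are not the right ones. You claim weighted estimates
\[
  \sup_x (1+|x|)\,\|\hess f_h(x)\|_{\op}\le C,\qquad \sup_x (1+|x|)^2\,|D^3 f_h(x)|\le C,
\]
reading the $(1+|x|)^{-1}$ and $(1+|x|)^{-2}$ factors off Assumption~\ref{assumption: assumptions on V}. But those factors are hypotheses on $D^2\nabla V$ and $D^3\nabla V$, not conclusions about $f_h$; their purpose is precisely to balance the growth of $\rho(x)$ so that the semigroup derivative bounds come out \emph{uniform} in $x$. What the paper actually proves (Proposition~\ref{lemma: derivative of semigroup}) is $|D P_t h|\le C_1e^{-\theta t}$, $|D^2 P_t h|\le S(t)e^{-\theta t/2}$ with $S(t)\sim t^{-1/2}$, and $|D^3 P_t h|\le Q(t)e^{-\theta t/4}$ with $Q(t)\sim t^{-1}$ as $t\downarrow 0$, all uniformly in $x$. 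Integrating in $t$ gives uniform bounds on $\nabla f_h$ and $\hess f_h$, but $\int_0 Q(t)\,\dt$ diverges logarithmically, so $D^3 f_h$ is \emph{not} uniformly bounded --- your third-order Taylor expansion with a pointwise $|D^3 f|\le C$ bound does not go through.

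Second, and consequently, your explanation of the logarithm is wrong. The paper does not obtain $|\log|\delta||$ by mollifying $h$ at scale $\varepsilon$ and optimizing; the smoothing $\tilde h_n$ in the proof of Theorem~\ref{Theorem: Wasserstein Bound} is merely a device to pass from $\smoothlip$ to $\lipone$ by dominated convergence and plays no quantitative role. Instead, the paper uses only a \emph{second}-order Taylor expansion with integral remainder,
\[
  \int_0^1 (1-t)\,\big\langle \hess f(W+t\delta)-\hess f(W),\,\delta\delta^\transpose\big\rangle_{\hs}\,\dt,
\]
and then establishes that $\hess f_h$ is log-Lipschitz: $|D^2 f_h(x+\epsilon y)-D^2 f_h(x)|\le C\,\epsilon(|\log\epsilon|\vee 1)$. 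This is proved by writing $D^2 f_h=-\int_0^\infty D^2 P_t h\,\dt$, splitting the time integral at $\epsilon^2\wedge\epsilon$, bounding the short-time piece directly via $S(t)$, and for the long-time piece differentiating in $x$ and using the $Q(t)$ bound --- the $q_{-1}/t$ term integrates to an exponential integral $E_1(\theta(\epsilon^2\wedge\epsilon)/4)\lesssim |\log\epsilon|$, which is the true source of the logarithmic factor. So the novelty relative to \cite{FangShaoXu2019} lies in Sections~\ref{sec: variation process bounds}--\ref{sec: semigroup bounds} (getting the uniform-in-$x$ semigroup bounds under Assumption~\ref{assumption: assumptions on V} rather than strict convexity), not in any new smoothing-and-optimize step.
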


The first step in proving Theorem~\ref{Theorem: main result} is to consider the Stein equations corresponding to $V$.
For differentiable $V:\RR^d\to\RR$, let $\mu$ be the Lebesgue density proportional to $e^{-V}$, and
consider the differential operator
\begin{equation}
  \ig := \Delta - \< \grad V, \grad \>.
\end{equation}
For $h\in\lipone$, the corresponding Stein equation is 
\begin{equation}\label{eq: Stein equation}
    \ig f = h - \mu(h),
\end{equation}
where $\mu(h)$ denotes the expectation of $h$ with respect to $\mu$.

\begin{assumption}\label{Assumption: bounds on stein solution}
  The function $V$ belongs to $C^1(\RR^d,\RR)$, and is such that $e^{-V(x)}$ and $|x|e^{-V(x)}$ are Lebesgue integrable, and the following
  hold. There exists $C\in(0,\infty)$ such that for each $h\in\smoothlip$, equation~\eqref{eq: Stein equation} has a solution $f = f_{h}
  \in C^{2}(\RR^{d}, \RR)$ satisfying for all nonzero $u,v\in\RR^d$ and $\epsilon>0$:
    \begin{equation}\label{eq: bound gradient f}
        \|\grad f\|_\infty \leq C,
    \end{equation}
    \begin{equation}\label{eq: Bound hilbert schmidt norm}
        \sup_{x \in \mathbb{R}^{d}}| D_{v} D_{u}f(x)| \leq C\, |u|\,|v|,
    \end{equation}
    \begin{equation}\label{eq: Bound hilbert schmidt norm 2}
        \sup_{x, y \in \mathbb{R}^{d}: |y|\leq 1}| D_{v} D_{u}f(x + \epsilon y) - D_{v} D_{u}f(x)| \leq C
        |\epsilon|(|\log \epsilon | \vee 1) \,|u|\,|v|.
    \end{equation}
\end{assumption}

Our main strategy in proving Theorem~\ref{Theorem: main result} follows~\cite{FangShaoXu2019}, and amounts to first establishing Wasserstein
approximation under Assumption~\ref{Assumption: bounds on stein solution}, and then showing that any $V$ satisfying
Assumption~\ref{assumption: assumptions on V} also satisfies Assumption~\ref{Assumption: bounds on stein solution}.

To that end, we state the following variant of~\cite[Theorem 2.5]{FangShaoXu2019} which establishes Wasserstein approximation under
Assumption~\ref{Assumption: bounds on stein solution}. For completeness, we provide a proof here, since for our application to the $O(N)$
model it is convenient to slightly generalise the statement from~\cite{FangShaoXu2019} to allow arbitrary $\sigma$-algebras
$\mathcal{F}\supseteq\sigma(W)$ in the definition of $R_1$ and $R_2$. 
Moreover, adding a suitable truncation and smoothing argument to the proof implies that
in Assumption~\ref{Assumption: bounds on stein solution},
equations~\eqref{eq: bound gradient f},~\eqref{eq: Bound hilbert schmidt norm} and~\eqref{eq: Bound hilbert schmidt norm 2}
need only be assumed to hold for $h\in\smoothlip$, rather than all $h\in\lipone$. 

\begin{theorem}[\cite{FangShaoXu2019}]\label{Theorem: Wasserstein Bound}
  Suppose $V$ satisfies Assumption \ref{Assumption: bounds on stein solution}, and let $\mu$ have Lebesgue density proportional to $e^{-V}$.
  Let $W,W'$ be identically distributed $\RR^d$-valued random variables, defined on the same probability space, such that if $\delta:=W'-W$ then
  $W$ and $\delta\delta^\transpose$ are both integrable.
  Let $\sF\supseteq\sigma(W)$ be a $\sigma$-algebra, let $\lambda>0$, and define $R_1$ and $R_2$ via~\eqref{eq: Regression Property} and~\eqref{eq: Correlation Property}.
  Then there exists a constant $C\in(0,\infty)$ such that
    \begin{equation}
      d(\sL(W),\mu)
      \leq C\left\lbrace \frac{1}{\lambda}\mathbb{E} [|\delta|^{3}(|\log|\delta|| \vee 1)] + \mathbb{E}|R_{1}|
      + \sqrt{d}\,\mathbb{E}\,\|R_{2}\| \right\rbrace. 
    \end{equation}
\end{theorem}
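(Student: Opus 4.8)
The plan is to follow \cite{FangShaoXu2019} and run the generator-based coupling scheme of Stein's method for the operator $\ig=\Delta-\<\grad V,\grad\>$. Fix $h\in\smoothlip$ and let $f=f_h\in C^2(\RR^d,\RR)$ be the Stein solution of~\eqref{eq: Stein equation} supplied by Assumption~\ref{Assumption: bounds on stein solution}, so that $\EE h(W)-\mu(h)=\EE[\ig f(W)]=\EE[\Delta f(W)-\<\grad V(W),\grad f(W)\>]$, which is legitimate since $h$ is bounded and $\|\grad f\|_\infty\le C$ forces $|f(x)|\le|f(0)|+C|x|$, with $W$ integrable. As $W'\stackrel{d}{=}W$, the same growth bound gives $\EE f(W')=\EE f(W)$, hence $\EE[f(W')-f(W)]=0$. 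Writing $W'=W+\delta$ and Taylor-expanding to second order with integral remainder,
\[
  f(W')-f(W)=\<\grad f(W),\delta\>+\tfrac12 D^2f(W)[\delta,\delta]+\mathcal{E},\qquad
  \mathcal{E}:=\int_0^1(1-t)\bigl(D^2f(W+t\delta)[\delta,\delta]-D^2f(W)[\delta,\delta]\bigr)\dt .
\]

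Next I would take conditional expectations given $\sF$. Since $W$ is $\sF$-measurable and $|\<\grad f(W),\delta\>|\le C|\delta|$, $|D^2f(W)[\delta,\delta]|\le C|\delta|^2$ are integrable (using that $W$ and $\delta\delta^\transpose$ are integrable, so $\EE|\delta|,\EE|\delta|^2<\infty$), the tower property, the identity $D^2f(W)[\delta,\delta]=\trace(\hess f(W)\,\delta\delta^\transpose)$, and the regression~\eqref{eq: Regression Property} and correlation~\eqref{eq: Correlation Property} properties give
\[
  \EE\<\grad f(W),\delta\>=\lambda\,\EE\<\grad f(W),-\grad V(W)+R_1\>,\qquad
  \tfrac12\EE\, D^2f(W)[\delta,\delta]=\lambda\,\EE\bigl[\Delta f(W)+\trace(\hess f(W)R_2)\bigr].
\]
Substituting into $0=\EE[f(W')-f(W)]$, dividing by $\lambda$, and rearranging yields the key identity
\[
  \EE h(W)-\mu(h)=-\,\EE\<\grad f(W),R_1\>-\EE\,\trace(\hess f(W)R_2)-\tfrac1\lambda\,\EE\,\mathcal{E}.
\]
If $\EE[|\delta|^3(|\log|\delta||\vee1)]=\infty$ the asserted bound is vacuous, so we may assume it is finite, which (as seen below) makes $\EE\mathcal{E}$ finite and justifies the Fubini interchange hidden in it.

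The three terms are then estimated via Assumption~\ref{Assumption: bounds on stein solution}. First, $|\EE\<\grad f(W),R_1\>|\le\|\grad f\|_\infty\,\EE|R_1|\le C\,\EE|R_1|$ by~\eqref{eq: bound gradient f}. Second, \eqref{eq: Bound hilbert schmidt norm} gives $\|\hess f(x)\|_\op\le C$, hence $\|\hess f(x)\|_\hs\le C\sqrt d$, so Cauchy--Schwarz for the Hilbert--Schmidt inner product yields $|\EE\,\trace(\hess f(W)R_2)|\le C\sqrt d\,\EE\|R_2\|$. Third, on $\{\delta\ne0\}$ apply~\eqref{eq: Bound hilbert schmidt norm 2} with $\epsilon=t|\delta|$ and $y=\delta/|\delta|$ to obtain $|D^2f(W+t\delta)[\delta,\delta]-D^2f(W)[\delta,\delta]|\le C\,t|\delta|^3(|\log(t|\delta|)|\vee1)$; using $|\log(t|\delta|)|\le|\log t|+|\log|\delta||$, the elementary bound $a\vee1\le a+1$, and $\int_0^1(1-t)t|\log t|\,\dt<\infty$, integration over $t\in[0,1]$ gives $|\mathcal{E}|\le C\,|\delta|^3(|\log|\delta||\vee1)$ (the integrand vanishing on $\{\delta=0\}$), whence $\tfrac1\lambda|\EE\mathcal{E}|\le\tfrac C\lambda\,\EE[|\delta|^3(|\log|\delta||\vee1)]$. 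Combining the three estimates gives $|\EE h(W)-\mu(h)|\le C\{\lambda^{-1}\EE[|\delta|^3(|\log|\delta||\vee1)]+\EE|R_1|+\sqrt d\,\EE\|R_2\|\}$ for every $h\in\smoothlip$, with $C$ depending only on $d$ and the constant in Assumption~\ref{Assumption: bounds on stein solution}.

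It remains to upgrade the test-function class from $\smoothlip$ to all of $\lipone$, as required by~\eqref{eq: Wasserstein definition}. Given $h\in\lipone$, let $h_m$ be obtained by truncating $h$ to $[-m,m]$ and then mollifying at scale $1/m$; then $h_m\in\smoothlip$ with Lipschitz constant at most $1$, and $|\EE h(W)-\EE h_m(W)|\le 1/m+\EE[(|h(W)|-m)_+]$, with the analogous bound under $\mu$. Letting $m\to\infty$ and using that $W$ and $\mu$ both have finite first moments (the latter because $|x|e^{-V}$ is integrable by Assumption~\ref{Assumption: bounds on stein solution}), the bound for $h_m$ passes to $h$; taking the supremum over $h\in\lipone$ completes the proof. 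The delicate step is the estimate of $\mathcal{E}$: one must separate $t|\delta|$ into its two factors inside the logarithm, verify that the resulting $t$-integral converges, and check the attendant Fubini interchange — routine once the reduction to the finite case is made, but this is precisely where the characteristic $|\log|\delta||$ factor, and hence the need for~\eqref{eq: Bound hilbert schmidt norm 2} rather than a plain continuity bound on $\hess f$, enters.
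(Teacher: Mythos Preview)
Your proof is correct and follows essentially the same approach as the paper: Taylor expand, condition on $\sF$ to introduce $R_1,R_2$, bound the three pieces via~\eqref{eq: bound gradient f}--\eqref{eq: Bound hilbert schmidt norm 2}, and finish by truncation and mollification. One simplification worth noting: in bounding the remainder $\mathcal{E}$, the paper applies~\eqref{eq: Bound hilbert schmidt norm 2} with $\epsilon=|\delta|$ and $y=t\delta/|\delta|$ (so $|y|=t\le1$), which yields $|D^2f(W+t\delta)[\delta,\delta]-D^2f(W)[\delta,\delta]|\le C|\delta|^3(|\log|\delta||\vee1)$ directly, and the $t$-integral reduces to $\int_0^1(1-t)\,\dt=1/2$ --- so the step you flagged as delicate (splitting $\log(t|\delta|)$ and checking convergence of the $t$-integral) becomes immediate with this choice.
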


\begin{proof}
    Fix $h \in \lipone$, and for each $n\in\posint$ define $h_{n}(x) = \min\{n, \max\{h(x), -n\}\}$ and $\tilde{h}_{n}(x) = \EE[h_n(x + Z/n)]$, where $Z \sim
    \mathcal{N}(0, \id)$. It is straightforward to verify that $\tilde{h}_{n}\in\smoothlip$.
    
    Let $f = f_{\tilde{h}_n}$ be the corresponding solution to equation \eqref{eq: Stein equation} guaranteed by Assumption~\ref{Assumption:
      bounds on stein solution}. By Taylor expanding $f(W')$ around $W$,
    \begin{equation}\label{eq: exchangeability}
        \begin{split}
     0&= \EE[f(W')]-\EE[f(W)]\\
      &= \EE[\< \grad f(W), \delta\>] + \frac{1}{2}\EE[\< \hess f(W), \delta \delta^{\transpose} \>_{\hs}]\\
      &\quad + \int_{0}^{1}(1-t)\,\EE[\< \hess f(W + t \delta) - \hess f(W), \delta \delta^{\transpose} \>_{\hs}]\, \dt.
        \end{split}
    \end{equation}
    Using the definition of $R_1$ and the fact that $\sigma(W) \subseteq \mathcal{F}$, we have
    \begin{equation}
      \label{eq: R1 identity}      
        \begin{split}
            \EE[\< \grad f(W), \delta\>] &= \EE[\< \grad f(W), \EE[\delta|\mathcal{F}]\>] = -\lambda\,\EE[\<
              \grad f(W), \grad V(W)\>] + \lambda\,\EE[\< \grad f(W),R_{1}\>]. 
        \end{split}
    \end{equation}
    Similarly, using the definition of $R_2$ we have
    \begin{equation}
      \label{eq: R2 identity}
        \begin{split}
            \frac{1}{2}\EE[\< \hess f(W), \delta \delta^{\transpose} \>_{\hs}] = \lambda \EE[\Delta f(W)] + \lambda
            \EE[\< \hess f(W), R_{2}  \>_{\hs}]. 
        \end{split}
    \end{equation}
    Substituting~\eqref{eq: R1 identity} and~\eqref{eq: R2 identity} into \eqref{eq: exchangeability} and rearranging yields
    \begin{equation}\label{eq: first expression ig W}
    \begin{split}
        \EE[\ig f(W)] &= - \EE[\< \grad f(W),R_{1}\>] - \EE[\< \hess f(W), R_{2}  \>_{\hs}]\\
        & \quad - \frac{1}{\lambda}\int_{0}^{1}(1-t)\, \EE[\< \hess f(W + t \delta) - \hess f(W), \delta
          \delta^{\transpose} \>_{\hs}]\, \dt. 
    \end{split}
    \end{equation}
    By \eqref{eq: bound gradient f} we have
    \begin{equation}
      \label{eq: Stein first term}
        \left|\EE[\< \grad f(W),R_{1}\>]\right| \leq \|\grad f\|_\infty\,\EE|R_{1}|\leq C\,\EE|R_{1}|.
    \end{equation}
    And using $\frac{1}{\sqrt{d}}\|A\|_{\hs} \leq \|A\|_{\text{op}} = \sup_{|v|, |w| = 1}|\< A,
    vw^{\transpose}\>_{\hs}|$ and \eqref{eq: Bound hilbert schmidt norm} yields
    \begin{equation}
      \label{eq: Stein second term}
      \left|\EE[\< \hess f(W), R_{2}  \>_{\hs}]\right| \leq \sup_{x \in \RR^{d}}\|\hess f(x)\|_{\hs} \,\EE\|R_{2}\|
      \leq C \,\sqrt{d}\,\EE\|R_{2}\|. 
    \end{equation}
    Finally, using~\eqref{eq: Bound hilbert schmidt norm 2} we obtain
    \begin{equation}
      \begin{split}
        \label{eq: Stein third term}
      &\left|\int_{0}^{1}(1-t)\,\EE\< \hess f(W + t \delta)-\hess f(W), \delta \delta^{\transpose} \>_{\hs}\, \dt\right|\\ 
      &\quad = \left|\int_{0}^{1}(1-t)\,
      \EE\left[D_\delta D_\delta f\left(W+ |\delta|\, \frac{t \delta}{|\delta|}\right) - D_\delta D_\delta f(W)\right]
      \, \dt\right|,
      \\ 
      & \quad \leq \frac{C}{2}\,\mathbb{E} [|\delta|^{3}(|\log|\delta|| \vee 1)].
    \end{split}
    \end{equation}
    Applying the bounds~\eqref{eq: Stein first term}, \eqref{eq: Stein second term} and~\eqref{eq: Stein third term} to~\eqref{eq: first
      expression ig W} then implies that for all $n\in\posint$
    \begin{equation}\label{eq: prelim bound wass}
        \begin{split}
            \left|\EE\left[\tilde{h}_{n}(W) - \mu\left(\tilde{h}_{n}\right)\right] \right| &= \left|\EE[\ig f_{\tilde{h}_n}(W)]\right|\\
            & \leq C\,\left\lbrace \frac{1}{\lambda}\mathbb{E} [|\delta|^{3}(|\log|\delta|| \vee 1)] + \mathbb{E}|R_{1}| +
            \sqrt{d}\,\mathbb{E} \|R_{2}\| \right\rbrace. 
        \end{split}
    \end{equation}
    
Now, it is straightforward to show that for all $x\in\RR^d$ we have
\begin{equation}
  \lim_{n\to\infty} \tilde{h}_n(x) = h(x).
\end{equation}
Moreover, since $h\in\lipone$ and $|h_n(x)|\le |h(x)|$ for all $x\in\RR^d$, it follows that
\begin{equation}
  |\tilde{h}_n(x)| \le |h(0)| + \EE|Z| + |x|.
\end{equation}
Therefore, since by assumption $W$ and $\mu$ have finite first absolute moments, it follows by dominated convergence that
\begin{equation}
  \lim_{n\to\infty} \EE\, \tilde{h}_n(W) = \EE \,h(W),
  \label{eq: E h(W) limit}
\end{equation}
and
\begin{equation}
  \lim_{n\to\infty} \mu(\tilde{h}_n) = \mu(h).
  \label{eq: mu(h) limit}
\end{equation}
Combining equations~\eqref{eq: E h(W) limit} and~\eqref{eq: mu(h) limit} then shows that
\begin{equation}
  \lim_{n\to\infty} \left|\EE\left[\tilde{h}_{n}(W) - \mu\left(\tilde{h}_{n}\right)\right] \right|
  =
 \left|\EE\left[h(W) - \mu\left(h\right)\right] \right|.
\label{eq: limit of discrepancy}
\end{equation}
The stated result then follows from~\eqref{eq: limit of discrepancy} by noting that the right-hand side of~\eqref{eq: prelim bound
  wass} is independent of $n$.
\end{proof}

 \section{\texorpdfstring{Rates of Convergence for the O(N) Model}{Rates of Convergence for the O(N) Model}}
 \label{sec: O(N) model}
In this section, we apply Theorem~\ref{Theorem: main result} to prove Theorem~\ref{Theorem: Critical O(N)}.
We therefore set $\beta=N$ throughout. We begin by stating a limit theorem for $S_n/n^{3/4}$, which is an immediate corollary
of~\cite[Theorem 1]{DunlopNewman1975}.
\begin{proposition}[\cite{DunlopNewman1975}]\label{prop: mgf convergence}
  Fix $N \geq 2$. Let $\sigma$ be distributed via~\eqref{eq:mean-field O(N) measure} with $\beta=N$, and let
  $W_{n} = n^{-3/4}\,S_{n}(\sigma)$. Let $Y$ be a random vector in $\RR^N$ whose distribution has Lebesgue density
  proportional to $\exp\left(-a_N |x|^{4}\right)$, with $a_N:=N^2/(4N+8)$. Then:
  \begin{enumerate}[label=(\roman*)]
    \item\label{prop-part: critical weak convergence} $W_n$ converges weakly to $Y$ as $n\to\infty$
    \item\label{prop-part: finite moments} $\EE|W_{n}|^{k} \leq C_{k} < \infty$, for all integer $k\ge0$ and $n\ge 2$
  \end{enumerate}
\end{proposition}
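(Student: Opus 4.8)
The plan is to extract both statements from the moment generating function (equivalently, Laplace transform) convergence that is the content of~\cite[Theorem 1]{DunlopNewman1975}. That reference establishes that for the critical mean-field $O(N)$ model, the rescaled magnetization $W_n = n^{-3/4} S_n$ has moment generating function converging pointwise on $\RR^N$ to that of $Y$; moreover the convergence is of the Laplace transforms $\EE\exp(\langle t, W_n\rangle)$ for $t$ in a neighbourhood of the origin (indeed, by the product structure of~\eqref{eq:mean-field O(N) measure} together with the Hubbard--Stratonovich transformation, one reduces the $n$-spin integral to a one-dimensional Laplace-type integral which is then analysed by the Laplace method at its critical point). From convergence of moment generating functions in a neighbourhood of $0$, two classical facts follow: first, weak convergence $W_n \Rightarrow Y$, giving~\ref{prop-part: critical weak convergence}; second, convergence of all moments, $\EE W_n^{\otimes k} \to \EE Y^{\otimes k}$, and in particular $\EE|W_n|^k \to \EE|Y|^k < \infty$ for every integer $k \ge 0$.

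First I would make precise the reduction to a scalar integral. Writing $S_n = \sum_{i=1}^n \sigma_i$, the Gaussian identity $\exp\!\big(\tfrac{\beta}{2n}|S_n|^2\big) = (2\pi)^{-N/2}\int_{\RR^N} \exp\!\big(-\tfrac12|y|^2 + \sqrt{\tfrac{\beta}{n}}\,\langle y, S_n\rangle\big)\,\dx y$ decouples the spins, so that the partition function and the Laplace transform of $S_n$ both reduce to integrals over $y\in\RR^N$ of $\exp(n\,g(y/\sqrt n))$ for an explicit analytic $g$ with a degenerate maximum at $0$ when $\beta = N$. Rescaling $y = n^{3/4} z$ and applying Watson's lemma / the Laplace method at the quartic critical point yields, for each fixed $t$ in a neighbourhood of $0\in\RR^N$,
\begin{equation}
  \EE\big[e^{\langle t, W_n\rangle}\big] \;\longrightarrow\; \frac{\int_{\RR^N} e^{\langle t, z\rangle - a_N|z|^4}\,\dx z}{\int_{\RR^N} e^{-a_N|z|^4}\,\dx z} \;=\; \EE\big[e^{\langle t, Y\rangle}\big],
\end{equation}
which is exactly the limiting moment generating function of $Y$. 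This is the substance of~\cite[Theorem 1]{DunlopNewman1975} and I would cite it directly rather than re-derive it.

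Given this, part~\ref{prop-part: critical weak convergence} is immediate from the standard theorem that pointwise convergence of moment generating functions on a neighbourhood of the origin implies weak convergence (see e.g.~\cite{Billingsley} or~\cite{GibbsSu2002}). For part~\ref{prop-part: finite moments}, the same neighbourhood-convergence of moment generating functions implies uniform integrability of $|W_n|^k$ for every $k$: indeed there is $t_0 > 0$ with $\sup_n \EE\exp(t_0 |W_n|) \le \sup_n \sum_{\pm}\prod_{j}\EE\exp(\pm t_0 W_n^{(j)}) < \infty$, since each one-dimensional moment generating function $\EE\exp(\pm t_0 W_n^{(j)})$ converges and hence is bounded in $n$; an exponential moment bound uniform in $n$ gives $\EE|W_n|^k \le C_k < \infty$ uniformly in $n$ by Markov's inequality ($|w|^k e^{-t_0|w|}$ is bounded). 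The bound for the specific small values $n \ge 2$ is absorbed into the constant. The main (and only real) obstacle is to confirm that~\cite[Theorem 1]{DunlopNewman1975} delivers moment generating function convergence on an open neighbourhood of the origin rather than merely, say, weak convergence or convergence of the density; once that is in hand both claims are routine, and in the remainder of the paper only~\ref{prop-part: finite moments} (the uniform moment bounds) will actually be used, via the coupling construction feeding into Theorem~\ref{Theorem: main result}.
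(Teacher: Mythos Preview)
The paper does not prove this proposition; it simply states it as an immediate corollary of~\cite[Theorem 1]{DunlopNewman1975}. Your sketch correctly identifies the mechanism behind that citation (Hubbard--Stratonovich linearization followed by Laplace asymptotics at a degenerate quartic critical point, yielding convergence of the moment generating function on a neighbourhood of the origin), and the deduction of both parts from that is the standard route.

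There is one small gap. The displayed inequality
\[
\EE\exp(t_0|W_n|) \;\le\; \sum_{\pm}\prod_{j}\EE\exp(\pm t_0 W_n^{(j)})
\]
implicitly treats the coordinates $W_n^{(1)},\dots,W_n^{(N)}$ as independent, which they are not under~\eqref{eq:mean-field O(N) measure}. The simplest repair is to bypass the exponential moment of $|W_n|$ entirely: from the convergence (hence uniform boundedness in $n$) of $\EE\exp(\pm t_0 W_n^{(j)})$ for each fixed $j$, you obtain $\sup_n \EE|W_n^{(j)}|^k < \infty$ for every $k$, and then the elementary inequality $|W_n|^k \le N^{k/2}\sum_{j=1}^N |W_n^{(j)}|^k$ gives part~\ref{prop-part: finite moments}. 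Alternatively, apply H\"older's inequality to the product $\prod_j e^{t_0|W_n^{(j)}|}$ to get $\EE\exp(t_0|W_n|) \le \prod_j \big(\EE e^{Nt_0|W_n^{(j)}|}\big)^{1/N}$, provided $Nt_0$ still lies in the region of MGF convergence.
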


Before proceeding further, we introduce the following definitions which will prove useful. For $\nu>-1$ we define $U_\nu:\RR\to(0,\infty)$ via
\begin{equation}
  \label{eq: U definition}
  U_\nu(x) := \sum_{k=0}^\infty \frac{x^{2k}}{2^{2k}\,k!\,(\nu+1)^{(k)}}
\end{equation}
where $(\theta)^{(n)}:=\theta(\theta+1)\ldots(\theta+n-1)$ denotes the rising factorial. The function $U_\nu$ is readily expressed in terms of the
modified Bessel function of the first kind with order $\nu$, however we find working directly with $U_\nu$ more convenient in the current
context. We then further define the functions $f_{N}, g_{N}: \RR \to (0,\infty)$ via
\begin{equation}\label{eq: fN and gN}
  f_{N}(x) := \frac{U_{N/2}(N x)}{U_{N/2-1}(N x)},
  \qquad\qquad g_{N}(x) := \frac{N}{N+2} \frac{U_{N/2+1}(N x)}{U_{N/2-1}(N x)}.
\end{equation}
It can be shown that
\begin{equation}
  \label{eq: fN asymptotics}
f_N(x) = 1 - \frac{N}{N+2} x^2 + \frac{2N^2}{(N+2)(N+4)} x^4 + O(x^6), \qquad x\to 0
\end{equation}
and
\begin{equation}
  \label{eq: fN + gN identity}
f_N(x) + x^2 g_N(x) =1.
\end{equation}

Now consider a coupling $(\sigma,\sigma')$ of~\eqref{eq:mean-field O(N) measure} with itself, defined by randomly choosing $\sigma$
via~\eqref{eq:mean-field O(N) measure} and then generating $\sigma'$ from $\sigma$ by taking one step of the corresponding single-spin
update Glauber (heat-bath) chain. In more detail, $\sigma'$ is obtained from $\sigma$ by selecting a uniformly random vertex, independently
of $\sigma$, and then choosing $\sigma'$ by conditioning on the values of $\sigma_j$ for all $j$ other than the chosen vertex. We then set
$W=n^{-3/4} S_n(\sigma)$ and $W'=n^{-3/4} S_n(\sigma')$. We will also set $m:=S(\sigma)/n$, and for $1\le i \le n$ introduce 
\begin{equation}
m_{(i)}:=\frac{S(\sigma)-\sigma_i}{n}.
\end{equation}
We have the following lemma concerning $(W,W')$.

\begin{lemma}\label{lemma: Critical remainders}
    Let $(W, W')$ be as described above. Then
    \begin{equation}\label{eq: W' - W conditioned on sigma}
        \EE[W'-W|\sigma] = \frac{1}{Nn^{3/2}}\left(-\frac{N^{2}}{N+2}|W|^{2}W + R_{1}\right),
    \end{equation}
    where
    \begin{equation}\label{eq: remainder 1}
    \begin{split}
        R_{1} &= \frac{N}{n^{1/4}}\sum_{i=1}^{n}\left[m_{(i)}f_{N}(|m_{(i)}|) - mf_{N}(|m|)\right] \\
        & \quad + Nn^{3/4}\left(f_{N}(|m|) - 1 + \frac{N|m|^{2}}{(N+2)}\right)m.
    \end{split}
    \end{equation}
    and 
    \begin{equation}\label{eq: W' - W W' - W T conditioned on sigma}
    \begin{split}
        \EE[(W'-W)(W'-W)^{\transpose}|\sigma] &= 2\frac{1}{Nn^{3/2}}\left(\id + R_{2}\right),
    \end{split}
    \end{equation}
    where 
    \begin{equation}\label{eq: remainder 2}
        \begin{split}
            R_{2} &= \left[\frac{N}{2n}\sum_{i=1}^{n}\sigma_{i}\sigma_{i}^{\transpose} - \frac{1}{2}\id\right] -
            \frac{N}{\sqrt{n}}WW^{\transpose} + \frac{N}{n^{2}}\sum_{i=1}^{n}\sigma_{i}\sigma_{i}^{\transpose}\\ 
            &\quad + \frac{N}{2n}\sum_{i=1}^{n}\,g_{N}(|m_{(i)}|)\,|m_{(i)}|^2\,
            \left[\sigma_{i}m_{(i)}^{\transpose}+ m_{(i)} \sigma_{i}^{\transpose} - \frac{\id}{N}\right]
            + \frac{N}{2n}\sum_{i=1}^{n}g_{N}(|m_{(i)}|)m_{(i)}m_{(i)}^\transpose. 
        \end{split}
    \end{equation}
\end{lemma}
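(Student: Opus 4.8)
The strategy is to compute the two conditional expectations $\EE[W'-W|\sigma]$ and $\EE[(W'-W)(W'-W)^\transpose|\sigma]$ directly using the explicit structure of the single-spin heat-bath update, and then to reorganize the resulting exact expressions into the claimed leading term plus remainder. The key is that, conditionally on $\sigma$ and on having selected vertex $I$ (uniform on $\{1,\dots,n\}$, independent of $\sigma$), the updated spin $\sigma'_I$ is a unit vector on $\sphere$ with density with respect to uniform measure proportional to $\exp(\beta\, m_{(I)}\cdot \sigma'_I)$ (with $\beta=N$), since all the $\sigma'_j=\sigma_j$ for $j\ne I$ contribute only a constant to~\eqref{eq:mean-field O(N) measure}. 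Thus everything reduces to moments of a single von Mises--Fisher-type distribution on $\sphere$ with concentration parameter $\beta|m_{(I)}|$ in the direction $m_{(I)}/|m_{(I)}|$.

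First I would record the relevant single-spin moment identities: if $X$ has density on $\sphere$ proportional to $\exp(\langle h, X\rangle)$ with $h=\beta m_{(I)}$, then $\EE[X] = f_N(|m_{(I)}|)\,\beta\,m_{(I)}/(\beta|m_{(I)}|) \cdot(\text{normalization})$ — more precisely, writing things in terms of the functions $U_\nu$ from~\eqref{eq: U definition}, one gets $\EE[X] = \frac{m_{(I)}}{|m_{(I)}|^{?}} f_N(|m_{(I)}|)\cdot(\dots)$, and the second moment $\EE[XX^\transpose]$ splits into a piece proportional to $\id/N$ and a piece proportional to $\hat m_{(I)}\hat m_{(I)}^\transpose$, with coefficients expressible via $f_N$ and $g_N$; here the identity~\eqref{eq: fN + gN identity} is exactly the statement that the trace of $\EE[XX^\transpose]$ equals $1$. (These are standard computations with the series defining $U_\nu$; the normalizing constant is $U_{N/2-1}(\beta|m_{(I)}|)$ up to a factor, which is why the ratios $f_N,g_N$ appear.) Once these single-spin formulas are in hand, $\EE[\sigma'_I|\sigma,I] = m_{(I)}\,f_N(|m_{(I)}|)/|m_{(I)}|^{?}$ and $\EE[\sigma'_I{\sigma'_I}^\transpose|\sigma,I]$ follow, and averaging over $I$ gives $\EE[W'-W|\sigma] = n^{-3/4}\,n^{-1}\sum_{i=1}^n(\EE[\sigma'_i|\sigma]-\sigma_i)$ and similarly for the product.

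Next I would do the bookkeeping to extract the leading term. For the first moment: averaging over $i$, $\EE[W'-W|\sigma] = n^{-3/4}\big(n^{-1}\sum_i m_{(i)} f_N(|m_{(i)}|) - m\big)$ up to the normalization factor. Adding and subtracting $m f_N(|m|)$ inside the sum produces the first line of $R_1$ in~\eqref{eq: remainder 1} (the fluctuation of $m_{(i)}f_N(|m_{(i)}|)$ around $m f_N(|m|)$), and then replacing $f_N(|m|)$ by its Taylor expansion~\eqref{eq: fN asymptotics} around $|m|=0$ isolates the cubic term $-\frac{N}{N+2}|m|^2 m$, which after inserting $m = W/n^{1/4}$ and the overall factor becomes exactly $\frac{1}{Nn^{3/2}}\big(-\frac{N^2}{N+2}|W|^2 W\big)$; the Taylor-remainder term $f_N(|m|)-1+\frac{N|m|^2}{N+2}$ times $m$ is the second line of $R_1$. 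One must be careful with the factors of $n^{1/4}$ and $n^{3/4}$ so that the pieces are written in terms of $W$ and $m$ consistently — that is the source of the $n^{1/4}$ and $n^{3/4}$ prefactors in~\eqref{eq: remainder 1}. For the second moment: $\EE[(W'-W)(W'-W)^\transpose|\sigma] = n^{-3/2}\,n^{-1}\sum_i\big(\EE[\sigma'_i{\sigma'_i}^\transpose|\sigma] - \sigma_i m^\transpose - m\sigma_i^\transpose + \sigma_i\sigma_i^\transpose\big)$ (using $W'-W = n^{-3/4}(\sigma'_I - \sigma_I)$ for the selected $I$, so the outer product only involves vertex $I$), and the $\id/N$ part of $\EE[\sigma'_i{\sigma'_i}^\transpose|\sigma]$ supplies the main term $2\cdot\frac{1}{Nn^{3/2}}\id$ after accounting for the $g_N$ correction; all the remaining terms — the $\sigma_i\sigma_i^\transpose$ pieces, the $WW^\transpose$ cross term, and the $g_N$-weighted contributions — get collected into $R_2$ as written in~\eqref{eq: remainder 2}.

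The main obstacle is keeping the normalization and the powers of $n$ straight while matching the single-spin moment formulas (which naturally produce $m_{(i)}$, not $m$, and are expressed through $U_\nu$ ratios) to the precise algebraic form of $R_1$ and $R_2$ in the statement; in particular one must verify that the $g_N$-terms in $R_2$ arise correctly from the anisotropic part $\hat m_{(i)}\hat m_{(i)}^\transpose$ of $\EE[\sigma'_i{\sigma'_i}^\transpose|\sigma]$ after using~\eqref{eq: fN + gN identity} to rewrite the $f_N$-coefficient, and that the $\sigma_i m_{(i)}^\transpose + m_{(i)}\sigma_i^\transpose - \id/N$ combination is what survives. No hard analysis is involved — it is an exact identity — but the combinatorial/algebraic reorganization is delicate, and I would organize it by first writing both conditional moments as exact sums over $i$ with $U_\nu$-ratio coefficients, then systematically adding and subtracting the $m$-evaluated (rather than $m_{(i)}$-evaluated) and Taylor-truncated versions to peel off the stated leading terms, leaving precisely~\eqref{eq: remainder 1} and~\eqref{eq: remainder 2}.
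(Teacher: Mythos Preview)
Your plan is correct and matches the paper's proof essentially step for step: compute the single-spin conditional moments $\EE[\sigma_i'\mid\{\sigma_j\}_{j\neq i}]=f_N(|m_{(i)}|)\,m_{(i)}$ and $\EE[\sigma_i'{\sigma_i'}^\transpose\mid\{\sigma_j\}_{j\neq i}]=\tfrac{1}{N}f_N(|m_{(i)}|)\,\id+g_N(|m_{(i)}|)\,m_{(i)}m_{(i)}^\transpose$, average over the uniformly chosen vertex, then add and subtract $m f_N(|m|)$ and use~\eqref{eq: fN asymptotics} and~\eqref{eq: fN + gN identity} to peel off the leading terms. One small slip in your sketch: in the second-moment expansion the cross terms are $\sigma_i\,\EE[\sigma_i'|\sigma]^\transpose=f_N(|m_{(i)}|)\,\sigma_i m_{(i)}^\transpose$, not $\sigma_i m^\transpose$, which is exactly what produces the $g_N(|m_{(i)}|)|m_{(i)}|^2\big[\sigma_i m_{(i)}^\transpose+m_{(i)}\sigma_i^\transpose-\id/N\big]$ term after applying~\eqref{eq: fN + gN identity}.
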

\begin{proof}
  We follow a similar strategy to the proof~\cite[Lemma 1]{KirkpatrickNawaz2016} (see also \cite[Lemma 11]{KirkpatrickMeckes2013}).
  The key ingredients are the small $x$ asymptotics for $f_N(x)$ given in~\eqref{eq: fN asymptotics}, combined with the fact
  that, as shown in~\cite[Theorem 3]{KirkpatrickNawaz2016}, the quantity $m$ concentrates at 0 as $n\to \infty$ when $\beta = N$.

   From the definition of $(\sigma,\sigma')$ we have
    \begin{equation}
      \begin{split}
      \EE[W'-W|\sigma] &= \frac{1}{n^{3/4}}\frac{1}{n}\sum_{i=1}^n \Big(\EE\left(\sigma_i|\{\sigma_j\}_{j\neq i}\right) -\sigma_i\Big)\\
      &= \frac{1}{n^{7/4}}\sum_{i=1}^{n} \EE(\sigma_{i}|\{\sigma_j\}_{j\neq i}) - \frac{1}{n} W
      \end{split}
    \end{equation}
    Using spherical coordinates~\cite{Blumenson1960} and an integral representation~\cite[3.71.(9)]{Watson1922} for $U_\nu$, it is easily
    shown~\cite{KirkpatrickNawaz2016} that
      \begin{equation}
        \EE(\sigma_i|\{\sigma_{j}\}_{j\neq i}) = f_N(|m_{(i)}|) m_{(i)}.
        \label{eq: conditional expectation of one spin}
      \end{equation}
      This then implies
    \begin{align}
        \EE[W'-W|\sigma] &= \frac{1}{n^{3/4}}mf_{N}(|m|) - \frac{1}{n}W + \frac{1}{n^{7/4}}\sum_{i=1}^{n}\left[m_{(i)}f_{N}(|m_{(i)}|) -
          mf_{N}(|m|)\right]\\
            &= -\frac{N|m|^{2}m}{n^{3/4}(N + 2)}  + \frac{1}{n^{3/4}}\left[f_{N}(|m|) - 1 + \frac{N|m|^{2}}{N+2}\right]m\\
            & \quad + \frac{1}{n^{7/4}}\sum_{i=1}^{n}\left[m_{(i)}f_{N}(|m_{(i)}|) - mf_{N}(|m|)\right],\\
            &=\frac{1}{Nn^{3/2}}\left(-\frac{N^{2}}{N+2}|W|^{2}W + R_{1}\right),
    \end{align}
    which is precisely Equation \eqref{eq: W' - W conditioned on sigma}.

    We now argue similarly to obtain \eqref{eq: W' - W W' - W T conditioned on sigma}. From the definition of $(\sigma,\sigma')$ we have
    \begin{multline}
            n^{3/2} \EE[(W'-W)(W'-W)^{\transpose}|\sigma] \\
            =\frac{1}{n}\,\sum_{i=1}^{n}\left(
            \EE[\sigma_{i}\sigma_{i}^{\transpose}|\{\sigma_j\}_{j\neq i}]
            - \EE[\sigma_{i}|\{\sigma_j\}_{j\neq i}]\sigma_{i}^{\transpose}
            - \sigma_{i}\EE[\sigma_{i}|\{\sigma_j\}_{j\neq i}]^{\transpose} + \sigma_{i}\sigma_{i}^{\transpose}
            \right).
            \label{eq: conditional expectation of W'-W projection}
    \end{multline}
    Using the same strategy as applied to prove~\eqref{eq: conditional expectation of one spin}, it can be shown~\cite{KirkpatrickNawaz2016}
    that 
    \begin{equation}
      \EE[\sigma_{i}\sigma_{i}^{\transpose}|\{\sigma_j\}_{j\neq i}] =
      \frac{1}{N}f_{N}(|m_{(i)}|) \id + g_{N}(|m_{i}|)m_{(i)}m_{(i)}^\transpose.
      \label{eq: conditional expectation of sigma-i projection}
      \end{equation}
    Substituting~\eqref{eq: conditional expectation of one spin} and~\eqref{eq: conditional expectation of sigma-i projection}
    into~\eqref{eq: conditional expectation of W'-W projection} then yields 
    \begin{multline}
      \label{eq: W' - W W' - W T calculation}
      n^{3/2}\, \EE[(W'-W)(W'-W)^{\transpose}|\sigma]\\
      = \frac{1}{n}\sum_{i=1}^{n}\left(
      f_{N}(|m_{(i)}|) \left[ \frac{\id}{N} - \left(\sigma_{i}m_{(i)}^{\transpose} + m_{(i)}\sigma_{i}^{\transpose}\right)\right]
        + g_{N}(|m_{(i)}|)m_{(i)}m_{(i)}^\transpose + \sigma_{i}\sigma_{i}^{\transpose}
        \right).
    \end{multline}
    Now observe that 
    \begin{equation}
        \begin{split}
          \frac{1}{n}\sum_{i=1}^{n}\left(\sigma_{i}m_{(i)}^{\transpose} + m_{(i)}\sigma_{i}^{\transpose}\right)
          &= \frac{1}{n^2}\sum_{i=1}^{n}\sum_{j\neq i}\left(\sigma_{i}\sigma_{j}^{\transpose} + \sigma_{j}\sigma_{i}^{\transpose}\right),\\
          &= \frac{2}{\sqrt{n}}WW^{\transpose} - \frac{2}{n^{2}}\sum_{i=1}^{n}\sigma_{i}\sigma_{i}^{\transpose}.
        \end{split}
    \end{equation}
    Using~\eqref{eq: fN + gN identity}, it then follows that
    \begin{equation}
      \begin{split}
        n^{3/2}\, \EE[(W'-W)(W'-W)^{\transpose}|\sigma]
        &= \frac{\id}{N} - \frac{2}{\sqrt{n}}WW^{\transpose} + \frac{2}{n^{2}}\sum_{i=1}^{n}\sigma_{i}\sigma_{i}^{\transpose} \\
        &\quad +\frac{1}{n}\sum_{i=1}^{n}\left(1- f_{N}(|m_{(i)}|) \right)
        \left[\sigma_{i}m_{(i)}^{\transpose} +m_{(i)}\sigma_{i}^{\transpose} - \frac{\id}{N}\right]
        \\
        &\quad + \frac{1}{n}\sum_{i=1}^n g_{N}(|m_{(i)}|)m_{(i)}m_{(i)}^\transpose + \frac{1}{n}\sum_{i=1}^n
        \sigma_{i}\sigma_{i}^{\transpose} 
      \\
      &=\frac{2}{N}(\id + R_{2})
      \end{split}
    \end{equation}
    which establishes \eqref{eq: W' - W W' - W T conditioned on sigma}.
\end{proof}

We are now in a position to prove Theorem \ref{Theorem: Critical O(N)}.

\begin{proof}[Proof of Theorem \ref{Theorem: Critical O(N)}.]
It is easily verified that $V(x)=a_N |x|^4$ satisfies Assumption~\ref{assumption: assumptions on V} with $\rho(x)=4 a_N |x|^{2}$ and
$M_{1}=M_{2}=48 a_N$. It therefore suffices to upper bound the three terms on the right-hand side of~\eqref{Theorem: Critical O(N)}, with
$W,W',R_1,R_2$ as given in Lemma~\ref{lemma: Critical remainders} and with $\lambda=N^{-1}n^{-3/2}$.

For the first term, we note that $|\delta|\leq~2n^{-3/4}$, and suppose initially that $n>2$. It then follows that $|\delta|\le 1$ and so
$|\log|\delta||=\log(1/|\delta|)$. But an elementary argument shows that $x^3[\log(1/x)\vee 1]\le 2 x^{8/3}$ for all $0\le x\le 1$. Therefore
    \begin{equation}
      \frac{1}{\lambda}\mathbb{E} [|\delta|^{3}(|\log|\delta|| \vee 1)]
      \leq N\, n^{3/2}\,\EE \,2|\delta|^{8/3}
      \leq \frac{16N}{\sqrt{n}}.
      \label{eq: third moment bound penultimate}
    \end{equation}
    But since $x^3[|\log(x)|\vee 1]\le 8$ for all  $0\le x \le 2$, it then follows from~\eqref{eq: third moment bound penultimate} that for
    all $n\in\posint$ 
    \begin{equation}
      \frac{1}{\lambda}\mathbb{E} [|\delta|^{3}(|\log|\delta|| \vee 1)]
      \leq \frac{32N}{\sqrt{n}}.
      \label{eq: third moment bound}
    \end{equation}
    
    We next consider $\EE|R_{1}|$. Since $|m|,|m_{(i)}|\in[0,1]$, and since $f_N$ is Lipschitz on $[0,1]$, there exists $c_1\in(0,\infty)$
    such that  
    \begin{equation}\label{eq: bound f N}
        |f_{N}(|m_{(i)}|) - f_{N}(|m|)| \leq c_1\left||m_{(i)}| - |m|\right| \leq c_1|m_{(i)} - m| =\frac{c_1}{n}
    \end{equation}
    where the last inequality follows from the reverse triangle inequality. It follows that there exists $c_2\in(0,\infty)$ such that
    \begin{equation}\label{eq: bound first term remainder 1}
    \begin{split}
      \frac{N}{n^{1/4}}&\left|\sum_{i=1}^{n}\left(m_{(i)}f_{N}(|m_{(i)}|) - mf_{N}(|m|)\right)\right|\\
      &= \frac{N}{n^{1/4}}\left|\sum_{i=1}^n m_{(i)}(f_N(|m_{(i)}|)-f_N(|m|)) + \sum_{i=1}^n(m_{(i)}-m)f_N(m)\right|\\
      &\leq \frac{N}{n^{1/4}}\left(|m|f_{N}(|m|)+\sum_{i=1}^{n}|m_{(i)}|\frac{c_1}{n} \right),\\ 
        &\leq \frac{N}{n^{1/4}}\left(\left[f_N(|m|)+c_1\right]\frac{|W|}{n^{1/4}} + \frac{c_1}{n} \right),\\
        &\leq c_2\frac{|W|+1}{\sqrt{n}}
    \end{split}
    \end{equation}
    where we have used the fact that $f_{N}$ is bounded on $[0,1]$.

    Taylor expanding $f_N(x)$ to second order around $x=0$ using~\eqref{eq: fN asymptotics} implies 
    there exists a $\xi\in[0,|m|]$ and $c_3\in(0,\infty)$ such that 
    \begin{equation}\label{eq: bound second term remainder 1}
        \begin{split}
            \left|Nn^{3/4}\left(f_{N}(|m|) - 1 + \frac{N|m|^{2}}{(N+2)}\right)m\right| &= Nn^{3/4}\frac{|f_{N}^{(4)}(\xi)|}{24}|m|^{5},\\
            & \leq c_3\frac{|W|^5}{\sqrt{n}},
        \end{split}
    \end{equation}
    since $f_N^{(4)}$ is bounded on $[0,1]$.

    Taking the expectation of the bounds in \eqref{eq: bound first term remainder 1} and \eqref{eq: bound second term remainder 1}
    and utilising Part~\ref{prop-part: finite moments} of Proposition~\ref{prop: mgf convergence} then implies there exists
    $r_1\in(0,\infty)$ such that
    \begin{equation}
      \label{eq: R1 bound}
        \EE\left|R_{1}\right| \leq \frac{r_1}{\sqrt{n}}.
    \end{equation}

    We now consider $\EE\|R_2\|_{\hs}$. To this end, we decompose $R_2$ into five terms as follows
    \begin{equation}
        \begin{split}
            R_{2} &= \left[ -\frac{\id}{2} + \frac{N}{2n}\sum_{i=1}^{n}\sigma_{i}\sigma_{i}^{\transpose}\right] -
            \frac{N}{\sqrt{n}}WW^{\transpose} + \frac{N}{n^{2}}\sum_{i=1}^{n}\sigma_{i}\sigma_{i}^{\transpose} ,\\ 
            & \quad + \frac{N}{2n}\sum_{i=1}^{n} \,|m_{(i)}|^2\,g_{N}(|m_{(i)}|) \,
            \left[\sigma_{i}m_{(i)}^{\transpose}+ m_{(i)} \sigma_{i}^{\transpose} - \frac{\id}{N}\right]
            + \frac{N}{2n}\sum_{i=1}^{n}g_{N}(|m_{(i)}|)m_{(i)}m_{(i)}^\transpose,\\ 
            &=: A_{1} + A_{2} + A_{3} + A_{4} + A_{5},
        \end{split}
    \end{equation}
    and bound each $A_{i}$ in turn. We defer the bound of $A_{1}$ since it is the most involved. To bound $\EE\|A_{2}\|_\hs$ and
    $\EE\|A_{3}\|_\hs$, we simply note that $\|xy^{\transpose}\|_{\hs} = |x|\,|y|$ for all $x,y \in \RR^{N}$, and so Part~\ref{prop-part:
      finite moments} of Proposition~\ref{prop: mgf convergence} implies there exist $C_2\in(0,\infty)$ such that  
    \begin{equation}
      \EE \|A_2\|_\hs = N\frac{\EE |W|^2}{\sqrt{n}} \le \frac{C_2}{\sqrt{n}},
      \label{eq: A2 bound}
    \end{equation}
    and
    \begin{equation}
      \EE \|A_3\|_\hs \le \frac{N}{n}.
      \label{eq: A3 bound}
    \end{equation}
    Similarly, 
    \begin{equation}\label{eq: A4 set up}
    \begin{split}
      \|A_{4}\|
      &\le
      \frac{N}{2n}\sum_{i=1}^n g_N(|m_{(i)}|) |m_{(i)}|^2
      \left(\|\sigma_i m_{(i)}^\transpose\| + \|m_{(i)}\sigma_i^\transpose\|+\frac{\|\id\|}{N}\right),\\
      &\le
      \frac{N}{n}\sum_{i=1}^n g_N(|m_{(i)}|) \left(\frac{|W|^2}{\sqrt{n}}+\frac{1}{n^2}\right)      
      \left(2|m_{(i)}|+\frac{1}{\sqrt{N}}\right),\\
      &\le
      N\left(\frac{|W|^2}{\sqrt{n}}+\frac{1}{n^2}\right)\left(2+\frac{1}{\sqrt{N}}\right)
      \frac{1}{n}\sum_{i=1}^n g_N(|m_{(i)}|),
    \end{split}
    \end{equation}
    and 
    \begin{equation}
      \begin{split}
        \|A_5\|_\hs &\le \frac{N}{2n}\sum_{i=1}^n g_N(|m_{(i)}|) |m_{(i)}|^2, \\
        &\le N\left(\frac{|W|^2}{\sqrt{n}}+\frac{1}{n^2}\right)\frac{1}{n}\sum_{i=1}^n g_N(|m_{(i)}|).
      \end{split}
    \end{equation}
    Since $g_N$ is bounded on $[0,1]$, Part~\ref{prop-part: finite moments} of Proposition~\ref{prop: mgf convergence} implies there
    exists $C_4, C_5\in(0,\infty)$ such that
    \begin{equation}\label{eq: A4 bound}
      \EE\|A_4\|_{\hs} \le \frac{C_4}{\sqrt{n}},
    \end{equation}
    and
    \begin{equation}\label{eq: A5 bound}
      \EE\|A_5\|_{\hs} \le \frac{C_5}{\sqrt{n}}.
    \end{equation}

    Finally, we turn our attention to $\EE\|A_{1}\|_{\hs}$. Since $\|A_{1}\|_{\hs} = \sqrt{\trace(A_{1}A_{1}^{\transpose})} =
    \sqrt{\trace(A_{1}^{2})}$, Jensen's inequality for concave functions implies
    \begin{equation}
      \begin{split}
        \label{eq initial A1 bound}
      \EE\|A_{1}\|_{\hs} &\leq
      \frac{1}{2n}\sqrt{\sum_{i,j=1}^{n}\EE\,\trace\,\left[(N\sigma_{i}\sigma_{i}^{\transpose}-\id)(N\sigma_{j}\sigma_{j}^{\transpose}-\id)\right]},\\ 
        &= \frac{1}{2n}\sqrt{\sum_{i, j = 1}^{n}\EE\,\trace\left[N^{2}\sigma_{i}\sigma_{i}^{\transpose}\sigma_{j}\sigma_{j}^{\transpose} -
            N\sigma_{i}\sigma_{i}^{\transpose} - N\sigma_{j}\sigma_{j}^{\transpose} + \id\right]},\\ 
        &= \frac{1}{2n}\sqrt{\sum_{i, j = 1}^{n}\left|N^{2}\,\EE\left[\< \sigma_{i},\sigma_{j} \>^{2}\right]-N\right|}.
    \end{split}
    \end{equation}
    If $i=j$, then we simply have $N^{2}\< \sigma_{i},\sigma_{i} \>^{2}-N=N^{2}-N$.
    Suppose then that $i \neq j$. From~\eqref{eq: conditional expectation of sigma-i projection} we have
    \begin{equation}\label{eq: Expectation of inner product squared i diff from j}
        \begin{split}
            \EE\left[\< \sigma_{i},\sigma_{j} \>^{2}\right] &= \EE[\sigma_{j}^{\transpose}\sigma_{i}\sigma_{i}^{\transpose}\sigma_{j}],\\
            &= \EE[\sigma_{j}^{\transpose}\EE[\sigma_{i}\sigma_{i}^{\transpose}|\{\sigma_{k}\}_{k \neq i}]\sigma_{j}],\\
            &= \EE\left[\sigma_{j}^{\transpose}\left(\frac{1}{N}f_{N}(|m_{(i)}|)\id +
              g_{N}(|m_{(i)}|)m_{(i)}m_{(i)}^\transpose\right)\sigma_{j}\right],\\
            &= \EE\left[\frac{1}{N}f_{N}(|m_{(i)}|) + g_{N}(|m_{(i)}|)\sigma_{j}^{\transpose}m_{(i)}m_{(i)}^\transpose\sigma_{j}\right], 
        \end{split}
    \end{equation}
    Applying~\eqref{eq: fN + gN identity} to~\eqref{eq: Expectation of inner product squared i diff from j} it follows that
    \begin{equation}\label{eq: i diff from j expression 1}
        \begin{split}
          N^{2}\EE\left[\< \sigma_{i},\sigma_{j} \>^{2}\right] - N
          &= N\,\EE\left[g_{N}(|m_{(i)}|)\left(N\<\sigma_{j}, m_{(i)} \>^{2} - |m_{(i)}|^{2}\right)\right],\\ 
         & = N\,\EE\left[g_{N}(|m_{(j)}|)\left(N\<\sigma_{j}, m_{(j)} \>^{2} - |m_{(j)}|^{2}\right)\right] + R_{i,j},
        \end{split}
    \end{equation}
    where
    \begin{equation}\label{eq: R i j}
    \begin{split}
      R_{i,j} &:= N\,\EE\left[
        g_{N}(|m_{(i)}|)\left(N\<\sigma_{j}, m_{(i)} \>^{2} - |m_{(i)}|^{2}\right)
       -g_{N}(|m_{(j)}|)\left(N\<\sigma_{j}, m_{(j)} \>^{2} - |m_{(j)}|^{2}\right)\right].
    \end{split}
    \end{equation}
    And again making use of~\eqref{eq: fN + gN identity} and~\eqref{eq: conditional expectation of sigma-i projection} shows that 
    \begin{equation}
    \begin{split}
        \EE&\left[g_{N}(|m_{(j)}|)\left(N \<\sigma_{j}, m_{(j)} \>^{2} - |m_{(j)}|^{2}\right)\right]\\
&=\EE\left[g_{N}(|m_{(j)}|)\,m_{(j)}^{\transpose} \left(N\,\EE[\sigma_{j}\sigma_{j}^{\transpose}|\{\sigma_{k}\}_{k \neq j}] - \id\right)m_{(j)}\right],\\ 
        &= \EE\left[g_{N}^2(|m_{(j)}|) m_{(j)}^{\transpose}\left(N m_{(j)} m_{(j)}^\transpose -|m_{(j)}|^2\id\right) m_{(j)}\right],\\ 
        &= (N-1)\,\EE\left[g_{N}^{2}(|m_{(j)}|)\,|m_{(j)}|^{4}\right].
    \end{split}
    \end{equation}
    Therefore, since $g_N$ is bounded on $[0,1]$ and $|m_{(j)}|^4\le 8 n^{-1} |W|^4 + 8 n^{-4}$,
    Part~\ref{prop-part: finite moments} of Proposition~\ref{prop: mgf convergence} implies there
    exists $c_4\in(0,\infty)$ such that
    \begin{equation}\label{eq: bound main term}
        N\,\EE\left[g_{N}(|m_{(j)}|)(N\<\sigma_{j}, m_{(j)} \>^{2} - |m_{(j)}|^{2})\right]
        \leq \frac{c_4}{n}.
    \end{equation}

    All that remains is to bound $R_{i, j}$ defined in \eqref{eq: R i j}. Defining $J:\RR^N\to\RR$ by
    \begin{equation}
      J(y)
      :=N\<\sigma_{j}, y \>^{2} - |y|^{2} = \sigma_{j}^{\transpose}(Nyy^{\transpose} - |y|^{2})\sigma_{j},
    \end{equation}
    it then follows that
    \begin{equation}\label{eq: rewriting R i j}
    \begin{split}
        \frac{1}{N}R_{i, j} &= \EE[g_{N}(|m_{(i)}|)J(m_{(i)}) - g_{N}(|m_{(j)}|)J(m_{(j)})],\\
        &= \EE\left[g_{N}(|m_{(i)}|)\left(J(m_{(i)}) - J(m_{(j)})\right)\right]
        + \EE\left[\left(g_{N}(|m_{(i)}|) - g_{N}(|m_{(j)}|)\right)J(m_{(j)})\right].
    \end{split}
    \end{equation}
    But the triangle inequality implies
    \begin{equation}
    \begin{split}
        |J(m_{(i)}) - J(m_{(j)})| &\le N\,\left|\frac{\< \sigma_{j}, \sigma_{i}\>^2}{n^{2}}-\frac{1}{n^{2}} + \frac{2\< \sigma_{j},
          m \>}{n} - \frac{2\< \sigma_{j}, \sigma_{i} \> \< \sigma_{j}, m \>}{n}\right|
        +\left||m_{(j)}|^{2}-|m_{(i)}|^{2}\right|,\\
        &\le N\,\left(\frac{6}{n} + \left||m_{(j)}|^{2}-|m_{(i)}|^{2}\right|\right)\\
        &\leq \frac{10N}{n},
    \end{split}
    \end{equation}
    and so since $g_N$ is bounded on $[0,1]$, there exists $c_5\in(0,\infty)$ such that
    \begin{equation}\label{eq: first R i j}
        \left|\EE\left(g_{N}(|m_{(i)}|)\left[J(m_{(i)}) - J(m_{(j)})\right]\right)\right| \leq \frac{c_5}{n}.
    \end{equation}

    Similarly, since $J(x)$ is bounded on $[-1, 1]^{N}$ and $g_N$ is Lipschitz on $[0, 1]$, there exists $c_6\in(0,\infty)$ such that
    \begin{equation}\label{eq: second R i j}
        \left|\EE\left[\left(g_{N}(|m_{(i)}|) - g_{N}(|m_{(j)}|)\right)J(m_{(j)})\right]\right| \leq \frac{c_6}{n}.
    \end{equation}
   Combining \eqref{eq: first R i j} and \eqref{eq: second R i j} with \eqref{eq: bound main term}, we conclude from~\eqref{eq initial A1
     bound} and~\eqref{eq: i diff from j expression 1} that there exists
    $C_1\in(0,\infty)$ such that
    \begin{equation}\label{eq: A1 bound}
    \EE\|A_{1}\|_{\hs} \leq \frac{C_1}{\sqrt{n}}.
    \end{equation}
    Finally, combining Equations~\eqref{eq: A1 bound}, \eqref{eq: A2 bound}, \eqref{eq: A3 bound}, \eqref{eq: A4 bound} and \eqref{eq: A5
      bound} implies there exists $r_2\in(0,\infty)$ such that 
    \begin{equation}
      \label{eq: R2 bound}
      \EE\|R_2\|_\hs \le \frac{r_2}{\sqrt{n}}.
    \end{equation}

    The stated result now follows by substituting the bounds~\eqref{eq: third moment bound}, \eqref{eq: R1 bound} and~\eqref{eq: R2 bound} into
    Theorem~\ref{Theorem: main result}.
\end{proof}

\section{SDEs and semigroups}\label{sec: sdes and semigroups}
Throughout this section and the remaining sections, we assume $V$ satisfies Assumption \ref{assumption: assumptions on V}. 
In order to study solutions of the Stein equation~\eqref{eq: Stein equation} corresponding to $V$, we follow the strategy
of~\cite{FangShaoXu2019} and consider the overdamped Langevin equation with drift $-\grad V$. We then introduce the corresponding Markov
semigroup and use the Bismut-Elworthy-Li formulae to bound derivatives of the solutions of the Stein equation~\eqref{eq: Stein equation} in terms of spatial
derivatives of the corresponding stochastic flow. 

To begin, we observe that if $V$ satisfies Assumption~\ref{assumption: assumptions on V}, then $\mu$ has finite absolute moments of all
orders. In particular, this immediately implies that $\mu(h)$ is well defined for all $h\in\lipone$. 

\begin{lemma}\label{lemma: finite moments of mu}
  Let $V$ satisfy Assumption~\ref{assumption: assumptions on V}. Then for all $p\ge 0$ we have
  $$
  \int_{\RR^{d}}|x|^{p} e^{-V(x)} \diff x < \infty.
  $$
\end{lemma}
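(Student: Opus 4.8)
The plan is to show that Assumption~\ref{assumption: assumptions on V} forces $V$ to grow at least quadratically at infinity, which yields a Gaussian-type tail on $e^{-V}$ and hence integrability of $|x|^p e^{-V(x)}$ for every $p\ge0$. The key structural input is~\eqref{eq: Assumption 1} together with the lower bound $\rho(x)\ge c_1|x|^k\ind(|x|\ge B)$ in~\eqref{eq: rho bounds}: along any ray, the second derivative of $V$ is bounded below, so integrating twice produces a quantitative growth estimate.

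Concretely, I would fix $x\in\RR^d$ with $|x|$ large, write $u = x/|x|$, and consider the scalar function $\phi(t) := V(tu)$ for $t\ge0$. Then $\phi''(t) = \langle u,\hess V(tu)\,u\rangle \ge \rho(tu) \ge c_1 t^k$ once $t\ge B$. Integrating from $B$ to $s$ gives $\phi'(s) \ge \phi'(B) + \frac{c_1}{k+1}(s^{k+1}-B^{k+1})$ when $k>0$ (and $\phi'(s)\ge\phi'(B)+c_1(s-B)$ when $k=0$); note in the $k=0$ case~\eqref{eq: Assumption 0} bounds $|\phi'(B)|$ uniformly in $u$ by $M_0 B$, while for $k>0$ one bounds $|\phi'(B)|$ using~\eqref{eq: Assumption 2} integrated from $0$ (or simply absorbs it, since $\phi'(B)$ enters only as a fixed additive constant depending on $N,M_1,\rho,B$ but not on the direction $u$, by the uniform bound on $\hess V$ on the ball $\{|x|\le B\}$ coming from continuity of $V\in C^4$). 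Integrating once more, $V(x) = \phi(|x|) \ge \phi(B) + \frac{c_1}{(k+1)(k+2)}|x|^{k+2} - (\text{lower order})$, and $\phi(B) = V(Bu)$ is bounded below uniformly in $u$ by $\min_{|y|\le B} V(y) > -\infty$. Hence there are constants $a>0$, $b\in\RR$, depending only on the data in Assumption~\ref{assumption: assumptions on V}, with
\begin{equation}
  V(x) \ge a\,|x|^{2} + b \qquad \text{for all } x\in\RR^d,
\end{equation}
using $k+2\ge2$ and that a polynomial of degree $\ge2$ with positive leading coefficient dominates $a|x|^2+b$ for suitable $a,b$.

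Given this lower bound, $\int_{\RR^d} |x|^p e^{-V(x)}\,\dx \le e^{-b}\int_{\RR^d}|x|^p e^{-a|x|^2}\,\dx < \infty$ for every $p\ge0$, which is the claim. The main obstacle is the bookkeeping for the lower-order and boundary terms — specifically, making sure the constant $\phi'(B)$ (and the behaviour of $V$ on the ball of radius $B$, including the $k=0$ subtlety where $\rho$ need not be bounded below away from the origin) is controlled \emph{uniformly over directions} $u\in\sphere$. This is handled by invoking continuity of $V$ and its derivatives on the compact ball $\{|x|\le B\}$: $\min_{|y|\le B}V(y)$ and $\max_{|y|\le B}|\grad V(y)|$ are finite, which suffices. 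Everything else is a one-dimensional integration argument.
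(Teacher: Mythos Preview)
Your proposal is correct and takes essentially the same approach as the paper: both integrate the Hessian lower bound~\eqref{eq: Assumption 1} twice along rays from the origin to obtain $V(x)\ge a|x|^{k+2}-b$, then conclude by comparison with a Gaussian-type integral. The only cosmetic difference is that the paper integrates from $0$ (so the boundary terms are the single constants $V(0)$ and $\grad V(0)$, with no direction dependence), whereas you integrate from radius $B$ and invoke compactness to control $\phi(B)$ and $\phi'(B)$ uniformly in $u$; this makes the paper's bookkeeping slightly cleaner but is not a substantive difference.
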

\begin{proof}
  Combining the fundamental theorem of calculus with equations~\eqref{eq: rho bounds} and \eqref{eq: Assumption 1} we obtain
    \begin{equation}
        \begin{split}
            V(x) - V(0) & = \int_{0}^{1}\< \grad V(tx) - \grad V(0), x \> \, \dt + \< \grad V(0), x \>,\\
            & = \int_{0}^{1}\int_{0}^{1}t\<  \hess  V(stx) x, x \>\, \ds \, \dt + \< \grad V(0), x \>,\\
            & \geq c_1\,|x|^{k+2} \int_{0}^{1}\int_{0}^{1}t^{k+1}s^k\,\ind(st|x| \geq B) \, \ds \, \dt + \< \grad V(0), x \>.
        \end{split}
        \label{eq: V(x)-V(0) bound}
    \end{equation}
    Now, observe that
    \begin{equation}
      \label{eq: indicator bound in my proof}
      \ind(st|x| \geq B) \ge \ind(st|x| \geq B)\, \ind(|x|\ge 4B) \ge \ind(s\ge1/2)\ind(t\ge1/2)\, \ind(|x|\ge 4B).
    \end{equation}
    Combining equations~\eqref{eq: V(x)-V(0) bound} and~\eqref{eq: indicator bound in my proof}, it then follows that there exists $a>0$ and
    $b\in\RR$ such that
    such that
    \begin{equation}
        \begin{split}
            V(x) & \geq a\,|x|^{k+2}\ind(|x|\geq 4B) + \< \grad V(0), x \> + V(0),\\
            & \geq a\,|x|^{k+2}\ - 4^{k+2}B^{k+2}a + \< \grad V(0), x \> + V(0),\\
            & \geq a|x|^{k+2} - b.
        \end{split}
    \end{equation}
    Since $k\ge0$, we then conclude that
    \begin{equation}
    \begin{split}
        \int_{\RR^{d}}|x|^{p}\,e^{-V(x)}\diff x \leq  \int_{\RR^{d}}|x|^{p}e^{-a|x|^{k+2} + b}\, \dx < \infty.
    \end{split}
    \end{equation}
\end{proof}

We remark that the $p=0$ case of Lemma~\ref{lemma: finite moments of mu} shows that $e^{-V}$ can indeed be normalised to yield a probability measure, so that
Assumption~\ref{assumption: assumptions on V} guarantees $\mu$ is well-defined.

\subsection{Overdamped Langevin equation}
Let $B_{t}$ be a $d$-dimensional Brownian motion defined on a filtered probability space $(\Omega, \mathcal{F}, (\mathcal{F}_{t})_{t \geq
  0}, \PP)$, let $V$ satisfy Assumption~\ref{assumption: assumptions on V}, and set $g:=-\grad V$. 
The corresponding overdamped Langevin equation is the stochastic differential equation 
\begin{equation}\label{eq: overdamped langevin}
      \diff X_{t} = g(X_{t})\dt + \sqrt{2}\,  \diff B_{t}.
\end{equation}
When $X_0=x$, for fixed $x\in\RR^d$, we denote the corresponding solution to~\eqref{eq: overdamped langevin} by $X_t^x$.

Recalling the notation of~\eqref{eq: D notation}, for $u,v,w,x\in\RR^d$ we further introduce 
\begin{align}
  \frac{\diff\, \sU_u^x(t)}{\diff \,t} &= Dg(X_t^x)[\sU_u^x(t)], \qquad \sU_u^x(0)=u
  \label{eq: 1st variation}\\
  \frac{\diff\, \sU_{u,v}^x(t)}{\diff \,t} &= Dg(X_t^x)[\sU_{u,v}^x(t)] + D^2g(X_t^x)[\sU_{u}^x(t),\sU_v^x(t)], \qquad \sU_{u,v}^x(0)=0
  \label{eq: 2nd variation}\\
  \frac{\diff\, \sU_{u,v,w}^x(t)}{\diff \,t} &= Dg(X_t^x)[\sU_{u,v,w}^x(t)]
  + \frac{1}{4}\sum_{\pi\in \sym\{u,v,w\}} D^2 g(X_t^x)[\sU_{\pi(u)}^x(t),\sU_{\pi(v),\pi(w)}^x(t)]\nonumber\\
  &\quad+ D^3g(X_t^x)[\sU_{u}^x(t),\sU_v^x(t),\sU_w^x(t)], \qquad \sU_{u,v,w}^x(0)=0
  \label{eq: 3rd variation}
\end{align}
where $\sym(S)$ denotes the symmetric group of the finite set $S$.
Equations~\eqref{eq: 1st variation}, \eqref{eq: 2nd variation}, and~\eqref{eq: 3rd variation} are respectively referred to as the first,
second and third variation equations~\cite{Cerrai2001} corresponding to~\eqref{eq: overdamped langevin}.

The semigroup corresponding to~\eqref{eq: overdamped langevin} is the family of operators $P_t:B_b\to B_b$ defined for
$t\ge0$ so that for each $\varphi\in B_b$ and $x\in\RR^d$
\begin{equation}
  \label{eq: semigroup definition}
  P_t \varphi(x):=\EE\varphi(X_t^x).
\end{equation} 

The following results are consequences of general results presented in~\cite{Cerrai2001,IkedaWatanabe1981,KaratzasShreve2014}. We present a proof
in Appendix~\ref{sec: process and semigroup properties}. 
\begin{proposition}
  \label{prop: sde properties}
  Let $V$ satisfy Assumption~\ref{assumption: assumptions on V} and let $\mu$ be the probability measure with Lebesgue density proportional
  to $e^{-V}$. Then:
  \begin{enumerate}[label=(\roman*)]
  \item \label{prop_part: well posed} The equation~\eqref{eq: overdamped langevin} is well posed
  \item\label{prop_part: strong solution} For each fixed $x,u,v,w\in\RR^d$, equations~\eqref{eq: overdamped langevin}, \eqref{eq: 1st
    variation}, \eqref{eq: 2nd variation} and~\eqref{eq: 3rd variation} have unique solutions
  \item\label{prop_part: flow derivatives} The process $X_t^x$ is 3-times mean-square differentiable with respect to $x$, and for all
    $u,v,w\in\RR^d$ we have $D_u X_t^x=\sU_{u}^x(t)$, $D_v D_u X_t^x =\sU_{u,v}^x(t)$ and $D_w D_v D_u X_t^x =
    \sU_{u,v,w}^x(t)$  
  \item\label{prop_part: solution to parabolic problem} For each $h\in\smoothlip$, the unique classical solution to the corresponding parabolic problem
    \begin{equation}
      \begin{split}
        \frac{\partial u(t, x)}{\partial t} &= \mathrm{L}\, u(t, x),  \qquad t>0, \,x\in \RR^{d},\\
        u(0, x) &= h(x),  \quad \qquad x\in \RR^{d},
        \end{split}
    \end{equation}
    is given by $u(t,x) = P_t h(x)$.
  \item\label{prop_part: semigroup spatial differentiability} For each $h\in\smoothlip$, and each fixed $t\ge0$, if $u(t,x)=P_t h(x)$ then
    $u(t,\cdot)\in C^2(\RR^d)$
  \item\label{prop_part: semigroup temporal differentiability} For each $h\in\smoothlip$, and each fixed $x\in\RR^d$, if $u(t,x)=P_t h(x)$ then
    $u(\cdot,x)\in C^1(0,\infty)$
  \end{enumerate}
\end{proposition}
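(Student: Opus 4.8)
The plan is to obtain all six parts from standard theory for stochastic differential equations and stochastic flows, the only genuine subtlety being that Assumption~\ref{assumption: assumptions on V} permits $\hess V$, and hence $Dg,D^2g,D^3g$ with $g=-\grad V$, to grow polynomially rather than being bounded, so a localization/moment-bound argument is needed before the textbook statements apply.

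\textbf{Parts \ref{prop_part: well posed}--\ref{prop_part: strong solution}.} Since $V\in C^4$, the drift $g$ is locally Lipschitz, so pathwise uniqueness for~\eqref{eq: overdamped langevin} is immediate on any interval before explosion. Differentiating $g(x)-g(0)=-\int_0^1 \hess V(tx)x\,\dt$ and using that~\eqref{eq: Assumption 1} forces $\hess V\ge 0$ (i.e.\ $V$ convex, since $\rho\ge 0$ by~\eqref{eq: rho bounds}) yields the one-sided bound $\<g(x),x\> \le |g(0)|\,|x|$; local Lipschitzness together with this monotone/coercive estimate is exactly what Khasminskii's non-explosion test (or the monotone-coefficients theorem) requires, giving part~\ref{prop_part: well posed}. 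For part~\ref{prop_part: strong solution}: once a continuous path $t\mapsto\Xtx$ is fixed, each of~\eqref{eq: 1st variation}--\eqref{eq: 3rd variation} is a linear ODE whose coefficients $Dg(\Xtx),D^2g(\Xtx),D^3g(\Xtx)$ are continuous, hence locally bounded, in $t$, so existence and uniqueness on every finite interval follow from Gronwall. In fact $Dg=-\hess V\le 0$ gives $\tfrac{\diff}{\dt}|\sU_u^x(t)|^2 = -2\<\sU_u^x(t),\hess V(\Xtx)\sU_u^x(t)\> \le -2\rho(\Xtx)|\sU_u^x(t)|^2\le 0$, so $|\sU_u^x(t)|\le|u|$, and the higher variations are controlled on finite intervals by successive applications of Gronwall using the polynomial growth in~\eqref{eq: Assumption 2}--\eqref{eq: Assumption 3}.

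\textbf{Part \ref{prop_part: flow derivatives}.} First, applying It\^o's formula to $|\Xtx|^{2p}$ and using the coercivity bound above shows that $\Xtx$ has finite moments of every order, locally uniformly in $x$; via~\eqref{eq: Assumption 2}--\eqref{eq: Assumption 3} this controls $Dg(\Xtx),D^2g(\Xtx),D^3g(\Xtx)$ in every $L^p$. With these moment bounds in hand, differentiability of the flow in the initial datum is the classical theory of stochastic flows: I would invoke the results of~\cite{Cerrai2001} (after verifying their hypotheses, localizing on large balls where the coefficients are bounded and passing to the limit), which give precisely that $\Xtx$ is three-times mean-square differentiable in $x$ and that $D_uX_t^x=\sU_u^x(t)$, $D_vD_uX_t^x=\sU_{u,v}^x(t)$, $D_wD_vD_uX_t^x=\sU_{u,v,w}^x(t)$. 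Equivalently, one argues directly: the difference quotient $\varepsilon^{-1}(X_t^{x+\varepsilon u}-\Xtx)$ converges in $L^2$ to $\sU_u^x(t)$ by Gronwall and the moment bounds, and iterating this for second and third differences yields~\eqref{eq: 2nd variation} and~\eqref{eq: 3rd variation}.

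\textbf{Parts \ref{prop_part: solution to parabolic problem}--\ref{prop_part: semigroup temporal differentiability}.} The generator of~\eqref{eq: overdamped langevin} is $\Delta+\<g,\grad\>=\ig$, so the Kolmogorov/Feynman--Kac correspondence identifies $u(t,x):=P_th(x)$ as a classical solution of $\partial_t u=\ig u$, $u(0,\cdot)=h$, the regularity coming from part~\ref{prop_part: flow derivatives} and differentiation under the expectation, justified by the mean-square differentiability of the flow and the fact that $h$ and $\grad h$ are bounded. For uniqueness, apply It\^o's formula to $s\mapsto v(t-s,X_s^x)$ for any classical solution $v$ in the admissible class; the PDE annihilates the drift term and one is left with $v(t,x)=\EE\,h(\Xtx)=P_th(x)$, a standard localization handling possible unboundedness. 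Part~\ref{prop_part: semigroup spatial differentiability} is then the $x$-differentiation just described (with continuity of the derivatives in $x$ from continuity of the flow and dominated convergence), and part~\ref{prop_part: semigroup temporal differentiability} follows from $\partial_t u=\ig u$ together with continuity in $t$ of the right-hand side; nondegeneracy of the constant diffusion in fact gives parabolic smoothing, so $u(t,\cdot)\in C^\infty$ for $t>0$, which removes any remaining regularity concern.

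\textbf{Main obstacle.} The crux is precisely that Assumption~\ref{assumption: assumptions on V} only bounds $Dg,D^2g,D^3g$ by polynomially growing quantities, so the cleanest off-the-shelf statements on flow differentiability (which typically assume bounded derivatives) and on well-posedness of the parabolic Cauchy problem do not apply verbatim. Bridging this gap requires first extracting $x$-locally-uniform moment bounds on $\Xtx$ from the convexity and coercivity of $V$, and then a localization argument — stopping the flow at exit times from large balls, applying the bounded-coefficient theory on each ball, and letting the radius tend to infinity while controlling the error with the moment bounds.
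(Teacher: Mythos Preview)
Your outline is correct, and you have correctly identified the crux: the derivatives of $g=-\grad V$ grow polynomially, so the bounded-coefficient textbook theory does not apply verbatim. However, the paper's route differs from yours in one important respect. Rather than localizing to large balls and reducing to the bounded-coefficient case, the paper proves a lemma showing that Assumption~\ref{assumption: assumptions on V} directly implies Hypotheses~1.1,~1.2 and~1.3 of~\cite{Cerrai2001}; these hypotheses are themselves tailored to locally Lipschitz drifts with polynomial growth and a one-sided dissipativity estimate, so once they are verified, parts~\ref{prop_part: strong solution}--\ref{prop_part: semigroup temporal differentiability} follow by direct citation of Cerrai's results (Theorems~1.2.5 and~1.6.2 and Propositions~1.3.2--1.3.5), and part~\ref{prop_part: well posed} then follows from local Lipschitzness via~\cite{IkedaWatanabe1981}. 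The bulk of the work is in checking Cerrai's Hypothesis~1.2, namely an inequality of the form $\<g(x+u)-g(x),u\>\le -c|u|^{k+2}+C(1+|x|^{k+2})$, which the paper derives from~\eqref{eq: Assumption 1} by a case split on whether $|u|\ge 4(|x|+B)$.

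What this buys: the localization/moment-bound programme you sketch would work, but it is redundant once one observes that Cerrai's framework already handles polynomial growth; your proposal effectively re-derives part of what~\cite{Cerrai2001} already packages. Conversely, your direct argument for non-explosion via the one-sided bound $\<g(x),x\>\le|g(0)|\,|x|$ and Khasminskii's test is a perfectly good alternative to the paper's appeal to Cerrai for existence, and your treatment of the variation equations as linear ODEs along a fixed sample path is exactly right. The only substantive simplification you are missing is that no localization is needed at all: verify Cerrai's hypotheses and cite.
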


The strategy now is to show that for any $h\in\smoothlip$, a solution to~\eqref{eq: Stein equation} can be constructed from $P_th$, and that
this solution satisfies the properties required by Assumption~\ref{Assumption: bounds on stein solution}. To that end, we begin with the
following geometric ergodicity bound.

\begin{lemma}\label{lemma: geom erg}
  Let $V$ satisfy Assumption~\ref{assumption: assumptions on V} and let $\mu$ be the probability measure with Lebesgue density proportional
  to $e^{-V}$. The semigroup $P_{t}$ corresponding to~\eqref{eq: overdamped langevin} satisfies
    \begin{equation}\label{eq: geom erg}
        \sup_{h \in \lipone}\left|P_{t}h(x) - \mu(h)\right| \leq 2\,\left(|x| + m_{1}(\mu)\right)\,e^{-\eta t}
    \end{equation}
    where $m_{1}(\mu) < \infty$ denotes the first absolute moment of $\mu$, and
    \begin{equation}\label{eq: eta definition}
        \eta := \left(\frac{1}{4B^2}\right)\wedge\left(\frac{c_1}{4(k+1)}\right)^{2/(k+2)}.
    \end{equation}
\end{lemma}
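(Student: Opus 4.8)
The plan is to reduce the geometric ergodicity estimate to a gradient bound for the semigroup $P_t$, and then to obtain that gradient bound from the first variation process together with a Feynman--Kac argument. (A direct synchronous coupling is not enough here: when $\rho$ degenerates at the origin — which is exactly the situation for the $O(N)$ application, where $V(x)=a_N|x|^4$, so $B=0$ and $k=2$ — such a coupling only produces a polynomial rate.)

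First, I would reduce to a gradient bound. Since $V\in C^4$, the measure $\mu$ is invariant for $P_t$ (a standard consequence of the stationarity of the Langevin dynamics), so $\mu(h)=\int_{\RR^d}P_th\,\diff\mu$ for every $h$ with $\mu(|h|)<\infty$; by Lemma~\ref{lemma: finite moments of mu} this includes all $h\in\lipone$. For $h\in\smoothlip$ we have $P_th\in C^2$ by Proposition~\ref{prop: sde properties}\ref{prop_part: semigroup spatial differentiability}, so by the mean value inequality
\begin{equation*}
  |P_th(x)-\mu(h)|=\left|\int_{\RR^d}\bigl(P_th(x)-P_th(y)\bigr)\,\mu(\diff y)\right|\le\|\grad P_th\|_\infty\int_{\RR^d}|x-y|\,\mu(\diff y)\le\|\grad P_th\|_\infty\,(|x|+m_1(\mu)).
\end{equation*}
The passage to general $h\in\lipone$ is then handled exactly as in the proof of Theorem~\ref{Theorem: Wasserstein Bound}: truncating and smoothing $h$ produces $\tilde h_n\in\smoothlip$ with $\|\grad\tilde h_n\|_\infty\le1$, and $P_t\tilde h_n(x)\to P_th(x)$, $\mu(\tilde h_n)\to\mu(h)$ by dominated convergence, using that $X_t^x$ and $\mu$ have finite first moments. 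It therefore suffices to show $\sup_{h\in\smoothlip}\|\grad P_th\|_\infty\le2e^{-\eta t}$.

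Next, I would bound the gradient via the first variation process. For $h\in\smoothlip$ and $u\in\RR^d$, Proposition~\ref{prop: sde properties}\ref{prop_part: flow derivatives} gives $D_uX_t^x=\sU_u^x(t)$, so differentiating under the expectation (legitimate thanks to the uniform bound $|\sU_u^x(t)|\le|u|$ obtained just below) yields $D_uP_th(x)=\EE[\<\grad h(X_t^x),\sU_u^x(t)\>]$, and hence $|D_uP_th(x)|\le\EE|\sU_u^x(t)|$ whenever $|u|=1$. With $g=-\grad V$, the first variation equation~\eqref{eq: 1st variation} reads $\frac{\diff}{\diff t}\sU_u^x(t)=-\hess V(X_t^x)\sU_u^x(t)$, so~\eqref{eq: Assumption 1} gives $\frac{\diff}{\diff t}|\sU_u^x(t)|^2=-2\<\sU_u^x(t),\hess V(X_t^x)\sU_u^x(t)\>\le-2\rho(X_t^x)|\sU_u^x(t)|^2$, whence
\begin{equation*}
  |\sU_u^x(t)|\le|u|\exp\!\left(-\int_0^t\rho(X_s^x)\,\ds\right)\qquad\text{and so}\qquad\|\grad P_th\|_\infty\le\sup_{x\in\RR^d}\EE\exp\!\left(-\int_0^t\rho(X_s^x)\,\ds\right).
\end{equation*}
Thus everything reduces to the Feynman--Kac type estimate $\sup_{x}\EE\exp\bigl(-\int_0^t\rho(X_s^x)\,\ds\bigr)\le2e^{-\eta t}$.

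This last estimate is where I expect the main difficulty to lie, and I would prove it by a supermartingale argument. Set $\phi(x):=c_1|x|^k\ind(|x|\ge B)$, so that $\phi\le\rho$ by~\eqref{eq: rho bounds}. It suffices to exhibit $\psi\in C^2(\RR^d)$ with bounded gradient, $1\le\psi\le2$, and $\ig\psi\le(\phi-\eta)\psi$ on $\RR^d$: then Itô's formula shows $M_t:=\psi(X_t^x)\exp\bigl(-\int_0^t(\rho(X_s^x)-\eta)\,\ds\bigr)$ is a bounded supermartingale, so $\EE\bigl[\psi(X_t^x)\exp(-\int_0^t\rho(X_s^x)\,\ds)\bigr]\le\psi(x)e^{-\eta t}\le2e^{-\eta t}$, and $\psi\ge1$ finishes. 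The real content is the construction of $\psi$ and the identification of the largest admissible $\eta$. A natural ansatz is a radial profile $\psi(x)=1+w(|x|)$ with $w$ decreasing, $w\equiv1$ near the origin and $w\to0$ at infinity. Inside the ball $\{|x|<B\}$, where $\phi$ vanishes, $\psi$ must be a strict supersolution $\ig\psi\le-\eta\psi<0$, and a maximum principle estimate on that ball forces $\eta\lesssim1/B^2$ — the origin of the term $1/(4B^2)$, which of course drops out when $B=0$. On the region $\{|x|\gtrsim(\eta/c_1)^{1/k}\}$ one instead uses the confinement inequality $\<x,\grad V(x)\>\ge\int_0^1\rho(sx)|x|^2\,\ds+\<\grad V(0),x\>$ (obtained by integrating~\eqref{eq: Assumption 1} along the ray through $x$) to control the drift term $-\<\grad V,\grad\psi\>$; optimising the profile there produces the term $(c_1/(4(k+1)))^{2/(k+2)}$. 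The inequality on the remaining annulus is then checked using the a priori bounds on $\grad V$ and $\hess V$ over compact sets that follow by integrating~\eqref{eq: Assumption 2}--\eqref{eq: Assumption 3}; in the degenerate case $k=0$ one replaces $\phi$ by $c_1\ind(|x|\ge B)$ and invokes~\eqref{eq: Assumption 0}, the argument being otherwise the same.
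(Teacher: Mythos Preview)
Your reduction to the Feynman--Kac quantity $\sup_x\EE\exp\bigl(-\int_0^t\rho(X_s^x)\,\ds\bigr)$ via the gradient bound and the first variation equation is sound, and the claim that this quantity decays exponentially is in fact true --- it is essentially Lemma~\ref{lemma: E(t) bound} of the paper, though with different constants. The gap is in the supermartingale step: the Lyapunov function $\psi$ is never constructed, and the ansatz as written cannot work. If $w\equiv1$ on a neighbourhood of the origin then $\ig\psi=0$ there, yet your inequality $\ig\psi\le(\phi-\eta)\psi$ forces $\ig\psi<0$ wherever $\phi<\eta$; for $k>0$ this includes a neighbourhood of~$0$, and for $B>0$ it includes the whole ball $\{|x|<B\}$. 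Even with a non-flat radial profile, you give only scaling heuristics for why $1/(4B^2)$ and $(c_1/(4(k+1)))^{2/(k+2)}$ should emerge, no actual construction, and no argument that the prefactor can be taken equal to~$2$. Note also that the paper's own proof of the exponential Feynman--Kac decay (Lemma~\ref{lemma: E(t) bound}) \emph{uses} Lemma~\ref{lemma: geom erg} as input, so that route is circular for you.

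The paper takes a different and much shorter route: it applies Eberle's reflection-coupling contraction, \cite[Corollary~2]{Eberle2016}. Since~\eqref{eq: Assumption 1} gives $\hess V\ge0$ everywhere, the auxiliary quantities $R_0$ and $\varphi$ in Eberle's statement trivialise, and one obtains directly $\sup_{h\in\lipone}|P_th(x)-\mu(h)|\le 2e^{-4t/R_1^2}(|x|+m_1(\mu))$, with $R_1$ determined by the function $\kappa(r)=\inf_{|x-y|=r}|x-y|^{-2}\int_0^1\langle\hess V(y+t(x-y))(x-y),x-y\rangle\,\dt$. The specific constants in $\eta$ then come from an explicit lower bound on $\kappa(r)$, obtained by feeding~\eqref{eq: Assumption 1} and~\eqref{eq: rho bounds} into the definition of $\kappa$ and reducing to a one-dimensional calculus computation. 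Reflection coupling is exactly the mechanism that yields an exponential rate despite the degeneracy of $\rho$ near the origin, and it is what your gradient/synchronous-coupling framework is missing.
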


\begin{proof}
  The proof is essentially an application of~\cite[Corollary 2]{Eberle2016}. Indeed, Example 1 of~\cite{Eberle2016} corresponds precisely to
  the process~\eqref{eq: overdamped langevin} with $V$ assumed to be $C^2$ and strictly convex outside a ball; both of these conditions are guaranteed by
  Assumption~\ref{assumption: assumptions on V}.

  The key quantity to study is $\kappa:(0,\infty)\to\RR$, defined by
    \begin{equation}
        \kappa(r):=\inf\left\{\frac{1}{|x-y|^{2}}\int_{0}^{1}\left\< x-y, \hess V(y+t(x-y))(x-y)\right\>\, \dt: x, y \in
        \RR^{d},\, |x-y| = r\right\}.
        \label{eq: Eberle kappa definition}
    \end{equation}
    Equation~\eqref{eq: Assumption 1} immediately implies $\kappa(r) \geq 0$ for all $r >0$. It then follows that
    \begin{equation}
      R_{0} := \inf\{R> 0: \kappa(r) \geq 0, \quad \forall  \ r\geq R\} = 0,
      \label{eq: R_0 value}
    \end{equation}
    and
    \begin{equation}
      \varphi(r) := \exp\left(-\frac{1}{4}\int_{0}^{r}s \kappa(s)^{-}\, \ds\right) = 1, \quad \forall \ r > 0.
      \label{eq: Eberle phi value}
    \end{equation}
    Defining
    \begin{equation}
      \label{eq: Eberle R_1 definition}
      R_{1} := \inf\{R> 0: \kappa(r)R^2 \geq 8 \quad \forall  \ r\geq R\},
    \end{equation}
    and using~\eqref{eq: R_0 value} and~\eqref{eq: Eberle phi value}, it then follows from~\cite[Corollary 2]{Eberle2016} and the
    stationarity of $\mu$ that
    \begin{equation}\label{eq: first bound on geom erg}
    \begin{split}
        \sup_{h \in \lipone}\left|P_{t}h(x) - \mu(h)\right| &\leq 2e^{-4 t/R_{1}^{2}} d(\delta_x,\mu)\\
        &\leq 2e^{-4 t/R_{1}^{2}}\left(|x| + m_{1}(\mu)\right).
    \end{split}
    \end{equation}
    In obtaining the second inequality we applied~\eqref{eq: Wasserstein definition}.

    It now remains to establish an upper bound on $R_1$. To achieve this, we begin by deriving a lower bound on $\kappa(r)$. 
    Let $e_1\in\RR^d$ denote the standard unit vector along the first coordinate axis, i.e. $e_1:=(1,0,\ldots,0)$, and let $W_1$ be
    the span of $e_1$. By \eqref{eq: Assumption 1} we have
    \begin{align}
      \frac{\kappa(r)}{c_1} &\geq
      \inf\left\{\int_{0}^{1}\ind(|y+t(x-y)| \geq B)|y+t(x-y)|^{k}\, \dt: x, y \in \RR^{d}, |x-y| = r\right\},\nonumber\\
      &=\inf\left\{\int_{0}^{1}\ind(|y+tr A e_1| \geq B)|y+trA e_1|^{k}\, \dt: y \in \RR^{d}, A\in SO(d)\right\},\nonumber\\
      &=\inf\left\{\int_{0}^{1}\ind(|y+t r e_1| \geq B)|y+t r e_1|^{k}\, \dt: y \in \RR^{d} \right\},\nonumber\\
      &=\inf\left\{\int_{0}^{1}\ind(|(a+r t)e_1+b| \geq B)|(a+rt)e_1+b|^{k}\, \dt: a\in\RR, b\in W_1^\perp \right\}\nonumber\\
      &=: \zeta(r).
    \end{align}
    where in the third step we used $|y+ rtA e_1|=|A(A^{-1}y+rte_1)|=|A^{-1}y+rte_1|$ for all $A\in SO(d)$.
    We next observe that $|(a+rt)e_1+b|^2=(a+rt)^2+|b|^2\ge(a+rt)^2$ for all $b\in W_1^\perp$. Therefore, since
    $s\mapsto s^k\indicator(s\ge B)$ is increasing on $[0,\infty)$, it follows that
    \begin{equation}
      \label{eq: zeta simplification prelim}
      \begin{split}
        \zeta(r)&=\inf_{a\in\RR} \int_{0}^{1}|a+rt|^{k}\,\ind(|a+r t| \geq B)\, \dt, \\
        &=\frac{1}{r}\inf_{a\in\RR} \int_{a-r/2}^{a+r/2}\,|s|^{k}\,\ind(|s| \geq B) \dt.
      \end{split}
    \end{equation}
    But if $r/2\le B$, then 
    \begin{equation}
      \frac{1}{r}\inf_{a\in\RR} \int_{a-r/2}^{a+r/2}\,|s|^{k}\,\ind(|s| \geq B) \dt
      \le 
       \frac{1}{r}\int_{-r/2}^{r/2}\,|s|^{k}\,\ind(|s| \geq B) \dt = 0,
    \end{equation}
    and so we conclude that
    \begin{equation}
      \label{eq: zeta simplification}
      \zeta(r)
        = \begin{cases}
          0, & r\le 2B,\\
         \displaystyle \dfrac{1}{r} \inf_{a\in\RR} \int_{U_a} |s|^{k}\, \dt , &  r>2B,
        \end{cases}
    \end{equation}
    where
    \begin{equation}
      U_a:=[a-r/2,a+r/2]\cap\{s\in\RR: |s|\ge B\}.
    \end{equation}
    Now observe that for all $a\in\RR$ we have $|U_a|\ge |U_0|$, and $|t|^k\ge|s|^k$ for all $t\in U_a\setminus U_0$ and $s\in U_0$. It
    follows that the infimum in~\eqref{eq: zeta simplification} is attained when $a=0$. Hence
    \begin{equation}
      \zeta(r) = \ind(r>2B)\frac{2r^k}{(k+1)}\left(\frac{1}{2^{k+1}}-\frac{B^{k+1}}{r^{k+1}}\right),
    \end{equation}
    and therefore
    \begin{equation}
      \kappa(r) \ge \frac{c_{1}}{2^{k+1}(k+1)}\,\ind(r\ge 4B)\,r^{k}.
      \label{eq: kappa bound}
    \end{equation}
    
    Finally, we apply the bound~\eqref{eq: kappa bound} on $\kappa(r)$ to obtain a bound on $R_1$.
    Combining~\eqref{eq: Eberle R_1 definition} and~\eqref{eq: kappa bound} yields
    \begin{equation}\label{eq: Eberle R1 bound}
        \begin{split}
          R_{1} &\le \inf\left\{R \geq 0: \ind(r\ge 4B) r^k \ge \frac{2^{k+4}(k+1)}{c_1}, \qquad \forall r\ge R\right\}\\
          &= \inf\left\{R\ge4B : R^2 r^k \ge \frac{2^{k+4}(k+1)}{c_1}, \qquad \forall r\ge R\right\} \\
          &= (4B) \vee \left(\frac{2^{k+4}(k+1)}{c_1}\right)^{1/(k+2)}. 
        \end{split}
    \end{equation}
    The stated result then follows by combining~\eqref{eq: first bound on geom erg} and~\eqref{eq: Eberle R1 bound}.
\end{proof}

\section{Proof of Theorem \ref{Theorem: main result}}\label{sec: proof of main result}
In this section, we show that for any $h\in\smoothlip$ a solution of the corresponding Stein equation~\eqref{eq: Stein equation} can be
constructed from $P_t h$. We then demonstrate that this solution satisfies the conditions of Assumption~\ref{assumption: assumptions
  on V}. In light of Theorem~\ref{Theorem: Wasserstein Bound}, this will then establish Theorem~\ref{Theorem: main result}. The
key technical challenge is to obtain the following bounds on the derivatives of $P_t h(\cdot)$.
\begin{proposition}
  \label{lemma: derivative of semigroup}
  Let $V$ satisfy Assumption~\ref{assumption: assumptions on V}, let $P_t$ denote the semigroup corresponding to~\eqref{eq: overdamped langevin},
  and let $h \in \smoothlip$. For any $x, u, v, w \in \RR^{d}$ the following hold:
    \begin{align}\label{eq: Ptphi derivative 1}
        \left|D P_{t}h(x)[u]\right| &\leq C_{1}\, e^{-\theta t}\, |u|,
        \\
        \label{eq: Ptphi derivative 2}        
         |D^{2} P_{t}h(x)[u, v]| &\leq S(t)\, e^{-\theta t/2}\, |u||v|,\\
           \label{eq: Ptphi derivative 3}
        |D^{3} P_{t}h(x)[u, v, w]| &\leq Q(t)\, e^{-\theta t/4}\, |u||v||w|, 
    \end{align}
    where $S:(0,\infty)\to(0,\infty)$ and $Q:(0,\infty)\to(0,\infty)$ are given by
    \begin{equation}
      S(t):= \frac{s_{-1/2}}{\sqrt{t}} + s_0
    \end{equation}
    and
    \begin{equation}
      Q(t):= \frac{q_{-1}}{t} + \frac{q_{-1/2}}{\sqrt{t}} + q_0 + q_1 t + q_2 t^2
    \end{equation}
    where $\theta,C_1,s_{-1/2},s_0,q_{-1},q_{-1/2},q_0,q_1,q_2\in(0,\infty)$. 
\end{proposition}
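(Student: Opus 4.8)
The three bounds are proved in the order stated, each feeding into the next, by combining the Bismut--Elworthy--Li (BEL) integration-by-parts formulae for $P_t$ — which convert spatial derivatives falling on the test function into It\^o integrals of the variation processes — with the uniform-in-$x$, polynomial-in-$t$ bounds on $\sU_u^x$, $\sU_{u,v}^x$, $\sU_{u,v,w}^x$ established in Section~\ref{sec: variation process bounds}, and with the exponential Lipschitz contraction underlying Lemma~\ref{lemma: geom erg}. The feature that keeps every bound uniform in $x$ is that, after one BEL step, $D^k P_t h$ is expressed through $\grad h$ alone — never through $h$ itself nor through any moment of $X_t^x$ — so that only $\|\grad h\|_\infty\le 1$ and the variation bounds enter. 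For~\eqref{eq: Ptphi derivative 1}: by part~\ref{prop_part: semigroup spatial differentiability} of Proposition~\ref{prop: sde properties}, $P_t h\in C^2$, so $|D P_t h(x)[u]|\le\|\grad P_t h\|_\infty|u|$, and for such $C^1$ functions $\|\grad P_t h\|_\infty$ is the Lipschitz constant of $P_t h$. The proof of Lemma~\ref{lemma: geom erg} applies \cite[Corollary 2]{Eberle2016}, which — since there $\varphi\equiv1$ — in fact delivers the Wasserstein contraction $\wass(\sL(X_t^x),\sL(X_t^y))\le 2\,e^{-\eta t}|x-y|$ for~\eqref{eq: overdamped langevin}; dualising via~\eqref{eq: Wasserstein definition} gives $\|\grad P_t h\|_\infty\le 2\,e^{-\eta t}$, which is~\eqref{eq: Ptphi derivative 1} with $C_1=2$, $\theta=\eta$.

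For~\eqref{eq: Ptphi derivative 2}, write $P_t h=P_{t/2}(P_{t/2}h)$ and set $\psi:=P_{t/2}h$, so that $\|\grad\psi\|_\infty\le C_1\,e^{-\theta t/2}$ by~\eqref{eq: Ptphi derivative 1}. Differentiating the chain-rule identity $DP_{t/2}\psi(x)[u]=\EE[\<\grad\psi(X_{t/2}^x),\sU_u^x(t/2)\>]$ once more in the direction $v$ — legitimate by dominated convergence, using part~\ref{prop_part: flow derivatives} of Proposition~\ref{prop: sde properties} and the variation bounds — gives
\[
D^2 P_t h(x)[u,v]=\EE\bigl[\<\hess\psi(X_{t/2}^x)\,\sU_v^x(t/2),\,\sU_u^x(t/2)\>\bigr]+\EE\bigl[\<\grad\psi(X_{t/2}^x),\,\sU_{u,v}^x(t/2)\>\bigr].
\]
The second term is at most $\|\grad\psi\|_\infty\,\EE|\sU_{u,v}^x(t/2)|\le C_1\,e^{-\theta t/2}\,C(1+t)|u||v|$, which is of the form $s_0\,e^{-\theta t/2}|u||v|$ after absorbing the factor $1+t$ into a slightly smaller $\theta$. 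For the first term one applies the BEL formula to the outer semigroup $P_{t/2}$ acting on $\psi$, moving one derivative off $\psi$: this rewrites it as expectations in which $\psi$ enters only through $\grad\psi(X_{t/2}^x)$ (again bounded by $C_1 e^{-\theta t/2}$), multiplied by the variation processes and an It\^o integral of $\sU_\cdot^x$ scaled by $(t/2)^{-1}$; It\^o's isometry then produces a net factor $(t/2)^{-1/2}$, giving the $s_{-1/2}\,t^{-1/2}\,e^{-\theta t/2}$ contribution. Summing yields~\eqref{eq: Ptphi derivative 2} with $S(t)=s_{-1/2}/\sqrt t+s_0$, uniformly in $x$ since only $\|\grad\psi\|_\infty$ and the $x$-free variation bounds were used.

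For~\eqref{eq: Ptphi derivative 3}, proceed analogously with $\psi:=P_{t/2}h$, now applying the third-order BEL formula to $P_{t/2}$ acting on $\psi$ while moving one derivative off $\psi$. The resulting terms involve $\grad\psi$ (bounded by $C_1 e^{-\theta t/2}$), $\hess\psi=\hess P_{t/2}h$ — controlled by the \emph{already established} bound~\eqref{eq: Ptphi derivative 2} at time $t/2$, which supplies the factor $e^{-\theta(t/2)/2}=e^{-\theta t/4}$ and the singularity $S(t/2)=\sqrt2\,s_{-1/2}/\sqrt t+s_0$ — the third variation process $\sU_{u,v,w}^x$ (bounded by $C(1+t^2)|u||v||w|$), and one or two scaled It\^o integrals from the BEL steps. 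The worst singularity, $q_{-1}/t$, comes from $S(t/2)\sim t^{-1/2}$ multiplied by a BEL scaling factor $t^{-1/2}$; $q_{-1/2}/\sqrt t$ from a single such factor; $q_0$ from the non-singular pieces; and the growing terms $q_1 t+q_2 t^2$ from the polynomial-in-$t$ growth of the bounds on $\sU_{u,v}^x$ and $\sU_{u,v,w}^x$, retained explicitly since it is dominated by $e^{-\theta t/4}$ for large $t$. This gives~\eqref{eq: Ptphi derivative 3} with $Q(t)=q_{-1}/t+q_{-1/2}/\sqrt t+q_0+q_1 t+q_2 t^2$, again uniformly in $x$; finally one replaces $\theta$ throughout by the smallest of the three decay rates obtained.

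\textbf{Main obstacle.} The crux is the variation-process bounds of Section~\ref{sec: variation process bounds}, which must be \emph{uniform in $x$}: the forcing terms in~\eqref{eq: 2nd variation}--\eqref{eq: 3rd variation} involve $D^2\grad V$ and $D^3\grad V$, which by~\eqref{eq: Assumption 2}--\eqref{eq: Assumption 3} may grow like $(1+\rho(x))/(1+|x|)$ and $(1+\rho(x))/(1+|x|)^2$ — genuinely unbounded when $k>1$, and in particular in the $O(N)$ application where $k=2$ — so a naive Duhamel estimate produces $x$-dependent bounds. The resolution is that the contraction rate of the homogeneous part of each variation equation is exactly $\rho(X_t^x)$: from~\eqref{eq: Assumption 1}, $\tfrac{\diff}{\diff t}|\sU_u^x(t)|^2\le-2\rho(X_t^x)|\sU_u^x(t)|^2$, whence $|\sU_u^x(t)|\le|u|\exp(-A_t^x)$ with $A_t^x:=\int_0^t\rho(X_s^x)\,\diff s$, and the homogeneous flow over $[s,t]$ contracts by $e^{-(A_t^x-A_s^x)}$. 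Substituting these into Duhamel's formula, the dangerous factor $1+\rho(X_s^x)$ from the forcing is absorbed using the exact differential $\rho(X_s^x)\,\diff s=\diff A_s^x$ and the identity $\int_0^t(1+\rho(X_s^x))\,e^{-cA_s^x}\,\diff s\le t+1/c$, trading the $x$-dependence for harmless polynomial growth in $t$. This is ultimately what makes all three bounds in Proposition~\ref{lemma: derivative of semigroup} uniform in $x$. Secondary technical points — the validity of the BEL formulae and of differentiation under the expectation in this non-compact setting with only a $C^3$ drift — follow from Proposition~\ref{prop: sde properties} together with standard results on stochastic flows.
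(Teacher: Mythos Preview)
Your overall architecture---BEL formulae + the uniform-in-$x$ variation bounds of Section~\ref{sec: variation process bounds}, bootstrapping from the first derivative to the second and third---is exactly the paper's. Your ``Main obstacle'' paragraph correctly identifies the crux and its resolution, matching the mechanism behind Proposition~\ref{prop: mean-square bounds}.

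Two points of divergence are worth noting. First, for~\eqref{eq: Ptphi derivative 1} you go via Eberle's $W_1$ contraction (dualising $\wass(\sL(X_t^x),\sL(X_t^y))\le 2e^{-\eta t}|x-y|$), whereas the paper simply combines the chain rule $D_uP_th(x)=\EE\langle\grad h(X_t^x),\sU_u^x(t)\rangle$ with $\|\grad h\|_\infty\le1$ and the exponential bound $\EE|\sU_u^x(t)|^2\le C_1^2e^{-2\theta t}|u|^2$ from Proposition~\ref{prop: mean-square bounds}. Both are valid; the paper's route keeps $\theta$ and $C_1$ consistent with the constants already appearing in the variation estimates, while yours avoids needing Lemma~\ref{lemma: E(t) bound} for this particular step.

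Second---and this is the genuine gap---your displayed formula for $D^2P_th$ is the \emph{naive chain rule}, with a term $\EE[\langle\hess\psi(X_{t/2}^x)\sU_v,\sU_u\rangle]$ containing $\hess\psi=D^2P_{t/2}h$, i.e.\ the very quantity you are trying to bound. Your subsequent verbal description (``only $\grad\psi$ appears, multiplied by an It\^o integral scaled by $(t/2)^{-1}$'') is \emph{not} a consequence of that formula; it is the content of the second-order BEL identity~\eqref{eq: BEL formula 2} in Proposition~\ref{prop: BEL}, which replaces the $\hess\psi$ term from the outset by $\EE\big(DP_{t/2}h(X_{t/2}^x)[\sU_u^x(t/2)]\,I_v^x(t/2)\big)$. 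The paper applies~\eqref{eq: BEL formula 2} (and~\eqref{eq: BEL formula 3} for the third derivative) directly---there is no intermediate chain-rule step to ``fix''. If you keep your displayed formula, you are stuck with a circular estimate; you should start from~\eqref{eq: BEL formula 2} instead. A minor related point: you bound $\EE|\sU_{u,v}^x(t/2)|$ by $C(1+t)$ and then absorb $1+t$ into $\theta$, but Proposition~\ref{prop: mean-square bounds} already gives the uniform bound $\EE|\sU_{u,v}^x(t)|^2\le C_2^2|u|^2|v|^2$, so no such absorption is needed and the stated $\theta$ survives unchanged.
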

The proof of Proposition~\ref{lemma: derivative of semigroup} is presented in Section~\ref{sec: semigroup bounds}. It 
uses Bismut-Elworthy-Li formulae to express the derivatives of $P_th$ in terms of the processes defined by the variation
equations~\ref{Assumption: bounds on stein solution}, together with estimates of the latter established in Section~\ref{sec: variation
  process bounds}. 

\begin{lemma}\label{lemma: expressions for the derivatives of f}
  Let $V$ satisfy Assumption~\ref{assumption: assumptions on V} and let $P_t$ denote the semigroup corresponding to~\eqref{eq: overdamped langevin}.  
  Fix $h \in\smoothlip$, and define
  \begin{equation}\label{eq: solution to stein eq}
    f(x):=-\int_{0}^{\infty}[P_{t}h(x) - \mu(h)]\, \dt.
  \end{equation}
  Then $f \in C^2(\RR^{d})$, and for all $x,u,v\in\RR^d$ 
    \begin{equation}\label{eq: first derivative of f}
      D_{u}f(x) = -\int_{0}^{\infty}D P_{t}h(x)[u]\, \dt,
    \end{equation}
    and 
    \begin{equation}\label{eq: second derivative of f}
        D_{v} D_{u}f(x) = -\int_{0}^{\infty}D^{2} P_{t}h(x)[u, v] \, \dt.
    \end{equation}
    Moreover, $f$ is a solution to the corresponding Stein equation~\eqref{eq: Stein equation}.
\end{lemma}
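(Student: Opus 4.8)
The plan is to establish, in order: (i) that the integral in~\eqref{eq: solution to stein eq} converges absolutely, so that $f$ is well defined and continuous; (ii) that one may differentiate under the integral sign twice, using the semigroup derivative estimates of Proposition~\ref{lemma: derivative of semigroup}, thereby obtaining $f\in C^2(\RR^d)$ together with~\eqref{eq: first derivative of f} and~\eqref{eq: second derivative of f}; and (iii) that the resulting $f$ satisfies $\ig f=h-\mu(h)$, by exploiting the parabolic characterisation of $P_t h$ from Proposition~\ref{prop: sde properties}\ref{prop_part: solution to parabolic problem} together with the ergodicity bound of Lemma~\ref{lemma: geom erg}.

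For step (i) I would simply invoke Lemma~\ref{lemma: geom erg}: for $|x|\le R$ the integrand in~\eqref{eq: solution to stein eq} is bounded in absolute value by the $t$-integrable function $2(R+m_1(\mu))e^{-\eta t}$, which gives absolute (and locally uniform) convergence; continuity of $f$ then follows from dominated convergence, using continuity of $x\mapsto P_t h(x)$, which is guaranteed by Proposition~\ref{prop: sde properties}\ref{prop_part: semigroup spatial differentiability}, and the same $R$-dependent bound.

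For step (ii) I would appeal to the standard theorem on differentiating a parameter-dependent integral. Proposition~\ref{prop: sde properties}\ref{prop_part: semigroup spatial differentiability} tells us each $P_t h(\cdot)$ is $C^2$, and by~\eqref{eq: Ptphi derivative 1} the map $x\mapsto DP_t h(x)[u]$ is dominated, uniformly in $x$, by the $t$-integrable function $C_1 e^{-\theta t}|u|$; hence $f\in C^1(\RR^d)$ with $D_u f(x)=-\int_0^\infty DP_t h(x)[u]\,\dt$, the continuity of $D_u f$ again following by dominated convergence. Differentiating this identity once more, now using~\eqref{eq: Ptphi derivative 2} and the fact that $\int_0^\infty S(t)e^{-\theta t/2}\,\dt<\infty$ (the integrand behaves like $t^{-1/2}$ near $t=0$ and decays exponentially at infinity), yields $f\in C^2(\RR^d)$ with $D_v D_u f(x)=-\int_0^\infty D^2 P_t h(x)[u,v]\,\dt$. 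I would note in passing that the third-derivative bound~\eqref{eq: Ptphi derivative 3} carries a non-integrable $1/t$ singularity at $t=0$, which is precisely why only $C^2$ regularity can be claimed.

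For step (iii) I would take the trace in~\eqref{eq: second derivative of f} and pair~\eqref{eq: first derivative of f} with the fixed vector $\grad V(x)$ to obtain $\ig f(x)=\Delta f(x)-\<\grad V(x),\grad f(x)\>=-\int_0^\infty \ig P_t h(x)\,\dt$, the interchange being valid since, for fixed $x$, $\ig P_t h(x)$ is dominated by an ($x$-dependent) $t$-integrable function, namely a constant multiple of $S(t)e^{-\theta t/2}+e^{-\theta t}$. Then, using $\ig P_t h(x)=\partial_t P_t h(x)$ from Proposition~\ref{prop: sde properties}\ref{prop_part: solution to parabolic problem}, the temporal regularity $t\mapsto P_t h(x)\in C^1(0,\infty)$ from Proposition~\ref{prop: sde properties}\ref{prop_part: semigroup temporal differentiability}, and the fundamental theorem of calculus on $[\epsilon,T]$, I would send $\epsilon\downarrow0$ (with $P_\epsilon h(x)\to h(x)$, e.g.\ by pathwise continuity of $X_t^x$ and bounded continuity of $h$) and $T\uparrow\infty$ (with $P_T h(x)\to\mu(h)$ by Lemma~\ref{lemma: geom erg}) to conclude $\ig f(x)=-(\mu(h)-h(x))=h(x)-\mu(h)$, i.e.\ $f$ solves~\eqref{eq: Stein equation}. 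The main obstacle is really only the careful bookkeeping for these two interchanges of limit and integral; the genuine analytic difficulty has been isolated in Proposition~\ref{lemma: derivative of semigroup}, whose proof is given in a later section.
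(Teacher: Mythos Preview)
Your proposal is correct and follows essentially the same approach as the paper: differentiate under the integral using the semigroup derivative bounds of Proposition~\ref{lemma: derivative of semigroup}, then identify $\ig f$ with $-\int_0^\infty \partial_t P_t h(x)\,\dt$ via Proposition~\ref{prop: sde properties}\ref{prop_part: solution to parabolic problem} and evaluate using the fundamental theorem of calculus together with Lemma~\ref{lemma: geom erg}. If anything, your treatment is slightly more careful than the paper's, which applies the fundamental theorem of calculus directly on $[0,T]$ without explicitly addressing the $t\downarrow0$ endpoint, whereas you handle it via an $\epsilon\downarrow0$ limit.
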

\begin{proof}
  Part~\ref{prop_part: semigroup spatial differentiability} of Proposition~\ref{prop: sde properties} implies $P_t h(\cdot)-\mu(h)\in
  C^2(\RR^d)$, and~\eqref{eq: Ptphi derivative 1} implies
    \begin{equation}
      |D_{u}[P_{t}h(x) - \mu(h)]| = |D P_{t}h(x)[u]| \leq C_{1}e^{-\theta t }|u|. 
    \end{equation}
    Since the upper bound is integrable, equation~\eqref{eq: first derivative of f} then follows by dominated convergence.
    Similarly, since $-D_{u}P_t h(\cdot)\in C^1(\RR^d)$, and the right-hand side of equation~\eqref{eq: Ptphi derivative 2} is
    integrable, dominated convergence establishes equation~\eqref{eq: second derivative of f}.

    Furthermore, the continuity of $-D_{v}D_{u} P_t h(\cdot)$ and the integrability of the right-hand side of equation~\eqref{eq: Ptphi
      derivative 2} imply, again by dominated convergence, that the right-hand side of~\eqref{eq: second derivative of f} is continuous. In
    particular, it then follows from~\eqref{eq: second derivative of f} that all second-order partial derivatives of $f$ are continuous, and
    so $f\in C^2(\RR^d)$. 
   
    Finally, a consequence of equations~\eqref{eq: first derivative of f} and~\eqref{eq: second derivative of f} is that
    \begin{equation}
     \ig f(x) = -\int_0^\infty \ig P_t h(x) \dt,
    \end{equation}
    and it then follows from Part~\ref{prop_part: solution to parabolic problem} of Lemma~\ref{prop: sde properties} that in fact
    \begin{equation}
      \ig f(x) = -\int_0^\infty \frac{\partial}{\partial t} P_t h(x) \dt.
      \label{eq: stein solution after FTC}
    \end{equation}
    But using Part~\ref{prop_part: semigroup temporal differentiability} of Proposition~\ref{prop: sde properties}, it follows from the
    fundamental theorem of calculus that for any $T>0$ we have
    \begin{equation}
      \label{eq: finite T FTC}
      -\int_0^T \frac{\partial}{\partial t} P_t h(x) \dt = h(x) - P_T h(x).
    \end{equation}
    Taking the limit $T\to\infty$ using Lemma~\ref{lemma: geom erg} we then obtain
    \begin{equation}
      \label{eq: limit of FTC}
      -\int_0^\infty \frac{\partial}{\partial t} P_t h(x) \dt = h(x) - \mu(h).
    \end{equation}
    Combining equations~\eqref{eq: stein solution after FTC} and~\eqref{eq: limit of FTC} then shows that $f$ indeed satisfies the Stein
    equation~\eqref{eq: Stein equation}. 
\end{proof}

Armed with Proposition~\ref{lemma: derivative of semigroup} and Lemma~\ref{lemma: expressions for the derivatives of f}, we are now in a
position to prove Theorem \ref{Theorem: main result}. 

\begin{proof}[Proof of Theorem \ref{Theorem: main result}]
  Thanks to Theorem \ref{Theorem: Wasserstein Bound} and Lemma~\ref{lemma: finite moments of mu}, it suffices to show that there exists
  $C\in(0,\infty)$ such that for every $h\in\smoothlip$, the corresponding Stein solution~\eqref{eq: solution to stein eq} satisfies the
  bounds~\eqref{eq: bound gradient f}, \eqref{eq: Bound hilbert schmidt norm} and~\eqref{eq: Bound hilbert schmidt norm 2} for all
  $u,v\in\RR^d$ and $\epsilon>0$.

  Let $h\in\smoothlip$, and $x, u, v \in \RR^{d}$.
  Combining~\eqref{eq: first derivative of f} and~\eqref{eq: Ptphi derivative 1} immediately yields
    \begin{equation}
      |D_{u}f(x)| \leq \int_{0}^{\infty}|D P_t h(x)[u]|\, \dt \leq \frac{C_{1}}{\theta}|u|,
      \label{eq: 1st grad f bound}
    \end{equation}
    so that~\eqref{eq: bound gradient f} holds with $C \geq K_1:=C_{1}/\theta$. Similarly, combining~\eqref{eq: second derivative of f} and
    \eqref{eq: Ptphi derivative 2} yields
    \begin{equation}\label{eq: 2nd grad f bound}
        \begin{split}
          |D_{v}D_{u}f(x)| &\leq \int_{0}^{\infty}\left| D^{2}P_{t}h(x)[u, v]\right|\, \dt,\\
          &\leq\int_{0}^{\infty} S(t)\, e^{-\theta t/2}\,\dt \, |u||v|,\\
          &= K_2\, |u||v|,
        \end{split}
    \end{equation}
    with
    \begin{equation}
      K_2:=\left(\frac{2 s_0}{\theta }+\sqrt{\frac{2\pi}{\theta }} s_{-1/2}\right),
    \end{equation}
    which shows that~\eqref{eq: Bound hilbert schmidt norm} holds with $C\ge K_2$. 
    
    Finally, to establish~\eqref{eq: Bound hilbert schmidt norm 2}, we fix $\epsilon>0$ and $y\in\RR^d$ with $|y| \leq 1$ and apply a
    similar decomposition to that used in~\cite{FangShaoXu2019}. Specifically, defining
    \begin{equation}\label{eq: Phi definition}
            \Phi(t) :=\left[D^{2}P_{t}h(x + \epsilon y)[u, v] - D^{2}P_{t}h(x)[u, v]\right],
    \end{equation}
    it follows by~\eqref{eq: second derivative of f} that
    \begin{equation}\label{eq: rewriting f(x + epsilon u) - f(x)}
        D_{v}D_{u}f(x + \epsilon y) - D_{v}D_{u}f(x) = -\int_{0}^{\epsilon^{2}\wedge \epsilon}\Phi(t)\,\dt -
        \int_{\epsilon^{2}\wedge \epsilon}^{\infty}\Phi(t)\,\dt.
    \end{equation}
    Equation~\eqref{eq: Ptphi derivative 2} implies
    \begin{equation}\label{eq: bound int 0 to epsilon squared}
    \begin{split}
      \left|\int_{0}^{\epsilon^{2}\wedge \epsilon}\Phi(t) \, \dt\right| &\leq 2|u||v|\int_{0}^{\epsilon^{2}\wedge \epsilon} S(t)\, \dt,\\
      &\leq 2\,|u| |v|\,(\epsilon\wedge \sqrt{\epsilon})\,  \left(s_0( \epsilon\wedge \sqrt{\epsilon}) +2s_{-1/2}\right).
    \end{split}
    \end{equation}
    If $\epsilon\le 1$ then $\epsilon\wedge \sqrt{\epsilon}=\epsilon$, in which case
    \begin{equation}
      (\epsilon\wedge \sqrt{\epsilon})\,  \left(s_0( \epsilon\wedge \sqrt{\epsilon}) +2s_{-1/2}\right) = 
      \epsilon\,  (s_0\epsilon +2s_{-1/2})\,\leq \epsilon (s_0 + 2s_{-1/2}).
    \end{equation}
    While if $\epsilon>1$ then $\epsilon\wedge\sqrt{\epsilon}=\sqrt{\epsilon}$, and so 
    \begin{equation}
      (\epsilon\wedge \sqrt{\epsilon})\,  \left(s_0( \epsilon\wedge \sqrt{\epsilon}) +2s_{-1/2}\right) =
      \sqrt{\epsilon}\,  \left(s_0\sqrt{\epsilon}+2s_{-1/2}\right)\leq
      \epsilon\left(s_{0} +2s_{-1/2}\right).
    \end{equation}
    We therefore conclude from~\eqref{eq: bound int 0 to epsilon squared} that for all $\epsilon>0$
    \begin{equation}\label{eq: int bound 0 to min}
        \left|\int_{0}^{\epsilon^{2}\wedge \epsilon}\Phi(t) \, \dt\right| \leq 2\epsilon (s_0 + 2s_{-1/2})|u| |v|.
    \end{equation}
    Now consider the remaining term in~\eqref{eq: rewriting f(x + epsilon u) - f(x)}. It follows from~\eqref{eq: Ptphi derivative 3} that
    $D^2P_t h(x+\cdot)[u,v]$ is differentiable with bounded derivative, and so the fundamental theorem of calculus implies 
    \begin{equation}
      \int_{\epsilon^{2}\wedge \epsilon}^{\infty}\Phi(t)\,\dt
      = \epsilon\int_{\epsilon^{2}\wedge \epsilon}^{\infty}\int_{0}^{1} D^{3}P_{t}h(x + \epsilon r y)[u, v, y]\, \dr\, \dt. 
    \end{equation}
    Therefore, using $|y|\le1$, and again applying~\eqref{eq: Ptphi derivative 3}, yields
    \begin{equation}
      \label{eq: phi tail bound}
    \begin{split}
      \left|\int_{\epsilon^{2}\wedge \epsilon}^{\infty}\Phi\,\dt\right| 
      &\leq \epsilon |u||v| \int_{\epsilon\wedge\epsilon^{2}}^{\infty}\, Q(t) \,e^{-\theta t/4} \dt,\\
      &\leq \epsilon |u||v|
      \left[\frac{128q_2}{\theta^3}+\frac{16q_1}{\theta^2}+\frac{4q_0}{\theta}+\frac{2\sqrt{\pi}q_{-1/2}}{\sqrt{\theta}}
      + q_{-1} E_1\left(\frac{\theta(\epsilon^{2}\wedge \epsilon)}{4}\right)\right]
    \end{split}
    \end{equation}
    where $E_{1}(\cdot)$ denotes the exponential integral~\cite[Equation 5.1.1]{AbramowitzStegun1972}. Applying~\cite[Equation 5.1.20]{AbramowitzStegun1972} 
    yields
    \begin{equation}
      \label{eq: bound on exponential integral}
      \begin{split}
        E_1\left(\frac{\theta(\epsilon^{2}\wedge \epsilon)}{4}\right) &\le 2\left(1\vee \left|\log \frac{\theta(\epsilon^{2}\wedge \epsilon)}{4} \right|\right)\\
        &\le 4\left(1+\left|\log\frac{\theta}{4}\right|\right)\left(1\vee |\log\epsilon|\right).
      \end{split}
    \end{equation}
    Applying~\eqref{eq: bound on exponential integral} to~\eqref{eq: phi tail bound}, and combining the result with~\eqref{eq: int bound 0
      to min} and~\eqref{eq: rewriting f(x + epsilon u) - f(x)} then yields 
    \begin{equation}
      \sup_{x,y\in\RR^d : |y|\le 1} |D_{v}D_{u}f(x + \epsilon y) - D_{v}D_{u}f(x)|
      \le
      K_3\,|u|\, |v|\, \epsilon\, (1\vee|\log\epsilon|)
      \label{eq: third main bound}
    \end{equation}
    with
    \begin{equation}
      K_3 :=
      2(s_0 +2s_{-1/2}) + 
      \frac{128q_2}{\theta^3}+\frac{16q_1}{\theta^2}+\frac{4q_0}{\theta}+\frac{2\sqrt{\pi}q_{-1/2}}{\sqrt{\theta}}
      + 4\left(1+\left|\log\frac{\theta}{4}\right|\right) q_{-1}.
    \end{equation}

    Combining Equations~\eqref{eq: 1st grad f bound}, \eqref{eq: 2nd grad f bound} and~\eqref{eq: third main bound} then implies the stated result holds with
    $C:=\max\{K_1,K_2,K_3\}$.
\end{proof}

\section{Bounding the solutions of the variation equations}\label{sec: variation process bounds}
The key ingredient needed to prove Proposition~\ref{lemma: derivative of semigroup}
is the following result, which gives bounds on the solutions to the variation equations~\eqref{eq: 1st variation}, \eqref{eq: 2nd variation}
and~\eqref{eq: 3rd variation}. 
\begin{proposition}\label{prop: mean-square bounds}
If $V$ satisfies Assumption~\ref{assumption: assumptions on V}, then for $x, u, v, w\in \RR^{d}$ and $t>0$,
\begin{equation}\label{eq: grad u X_t deterministic}
     |\,\sU_{u}^x(t)| \leq |u|,
 \end{equation}
\begin{equation}\label{eq: grad u X_t}
     \EE|\,\sU_{u}^x(t)|^{2} \leq C_{1}^{2}e^{-2\theta t}|u|^{2},
 \end{equation}
\begin{equation}\label{eq: grad u1 u2 X_t}
     \EE|\,\sU_{u,v}^x(t)|^{2} \leq C_{2}^{2}|u|^{2}|v|^{2},
 \end{equation}
 \begin{equation}\label{eq: grad u1 u2 u3 X_t deterministic}
     |\,\sU_{u,v,w}^x(t)| \leq |u||v||w|P(t),
 \end{equation}
 where, 
 \begin{align*}
     C_{1} &:= \frac{1}{\sqrt{J(2t_{0})}},                               &\theta :=& \frac{\log(1/J(2t_{0}))}{2t_{1}},\\ 
     C_{2} &:= \sqrt{2}M_1\left(1+\frac{C_1^2}{\theta^2}\right)^{1/2},   &P(t)   :=& (3M_1^2(t+1)/2+M_2)(t+1).
 \end{align*}
with $J:[1,\infty)\to\RR$ defined via
   \begin{equation*}
     J(t):= 1-\frac{1-e^{-\chi}}{2t}, \qquad  \chi:=c_1B^k,
   \end{equation*}
   and 
   \begin{equation*}
     \begin{split}
        t_{0}&:= \frac{\eta + 2B + 2m_{1}(\mu)}{\eta\mu(\{x\in\RR^{d}: |x| > B+ 1\})},\\
        t_{1}&:= \max\left\{2t_{0}, \frac{1}{\chi}\log\left(\frac{4t_{0}}{1-e^{-\chi}}\right)\right\}.
        \end{split}
    \end{equation*}
   The constants $k,B,c_1,M_1,M_2$ are as defined in Assumption~\ref{assumption: assumptions on V}, while $\mu$ and $\eta$ are as defined in
   Lemma~\ref{lemma: geom erg}. 
\end{proposition}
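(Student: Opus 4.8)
The plan is to prove the four estimates in the stated order, using throughout that since $g=-\grad V$ the Jacobian along the flow is $Dg(X_t^x)=-\hess V(X_t^x)$, which by~\eqref{eq: Assumption 1} satisfies $\<\xi,Dg(X_t^x)\xi\>\le-\rho(X_t^x)\,|\xi|^2$ for all $\xi\in\RR^d$. For~\eqref{eq: grad u X_t deterministic}, differentiating $|\sU_u^x(t)|^2$ along~\eqref{eq: 1st variation} and using this coercivity gives the scalar differential inequality $\frac{\diff}{\diff t}|\sU_u^x(t)|^2\le-2\rho(X_t^x)\,|\sU_u^x(t)|^2$, which integrates to $|\sU_u^x(t)|^2\le|u|^2\exp(-2\int_0^t\rho(X_s^x)\,\ds)\le|u|^2$ since $\rho\ge0$. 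The identical computation shows that the solution operator (propagator) $\Psi_{s,t}^x$ of the homogeneous linearised equation $\frac{\diff}{\diff t}Y=Dg(X_t^x)[Y]$, which also governs the linear part of~\eqref{eq: 2nd variation} and~\eqref{eq: 3rd variation}, obeys the pathwise contraction $\|\Psi_{s,t}^x\|_{\op}\le\exp(-\int_s^t\rho(X_r^x)\,\dr)\le1$ for $0\le s\le t$; this is the main tool for the remaining bounds.

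For~\eqref{eq: grad u X_t}, taking expectations above reduces matters to the uniform exponential-moment estimate $\sup_{x\in\RR^d}\EE\exp(-2\int_0^t\rho(X_s^x)\,\ds)\le C_1^2\,e^{-2\theta t}$, $t\ge0$. Since~\eqref{eq: rho bounds} gives $\rho(y)\ge c_1B^k\,\ind(|y|\ge B)=\chi\,\ind(|y|\ge B)$, the integral $\int_0^t\rho(X_s^x)\,\ds$ dominates $\chi$ times the occupation time of $\{|\cdot|\ge B\}$, and convexity of $r\mapsto e^{-2\chi r}$ on a bounded interval reduces this to a one-step bound of the form $\sup_x\EE\exp(-2\int_0^{t_1}\rho(X_s^x)\,\ds)\le J(2t_0)<1$, which holds provided the expected occupation time of $\{|\cdot|\ge B\}$ over $[0,t_1]$ is bounded below uniformly in the starting point. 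For $x$ in a bounded set this follows from Lemma~\ref{lemma: geom erg} applied to a $1$-Lipschitz function squeezed between $\ind(|\cdot|\ge B+1)$ and $\ind(|\cdot|\ge B)$, using $\mu(\{|x|>B+1\})>0$ (valid since $\mu$ has a strictly positive density) and the fact that the linear-in-$t_1$ main term beats the transient $x$-dependent correction once $t_1$ exceeds the stated threshold. For $|x|$ large one additionally invokes the super-quadratic growth of $V$ forced by~\eqref{eq: Assumption 1} and~\eqref{eq: rho bounds}: the flow is driven into a fixed bounded region within a uniformly controlled time, after which the previous bound applies via the strong Markov property. Iterating the one-step bound over successive blocks of length $t_1$ via the Markov property then yields the exponential-moment estimate with $C_1=J(2t_0)^{-1/2}$ and $\theta=\log(1/J(2t_0))/(2t_1)$; note $t_0\ge1/\mu(\{|x|>B+1\})\ge1$, which forces $J(2t_0)\ge\tfrac12$ and hence $C_1^2\le2$.

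The last two bounds come from the variation-of-constants formula together with the contraction of $\Psi$. For~\eqref{eq: grad u1 u2 X_t}, $\sU_{u,v}^x(t)=\int_0^t\Psi_{s,t}^x\,D^2g(X_s^x)[\sU_u^x(s),\sU_v^x(s)]\,\ds$; bounding $|D^2g(X_s^x)[\cdot,\cdot]|\le M_1(1+\rho(X_s^x))\,|\cdot|\,|\cdot|$ from~\eqref{eq: Assumption 2} (using $1+|x|\ge1$), $|\sU_u^x(s)|\le|u|$, $|\sU_v^x(s)|\le|v|$, $\|\Psi_{s,t}^x\|_{\op}\le e^{-\int_s^t\rho}$, and the telescoping identity $\int_0^t\rho(X_s^x)\,e^{-\int_s^t\rho(X_r^x)\,\dr}\,\ds=1-e^{-\int_0^t\rho}\le1$, one gets $|\sU_{u,v}^x(t)|\le M_1|u||v|\,(1+\int_0^t e^{-\int_s^t\rho(X_r^x)\,\dr}\,\ds)$; squaring, using $(a+b)^2\le2a^2+2b^2$, and controlling $\EE(\int_0^t e^{-\int_s^t\rho}\,\ds)^2$ by conditioning and re-using the exponential-moment estimate (which collapses the double time-integral into $\int_0^\infty v\,e^{-2\theta v}\,\diff v=1/(4\theta^2)$) yields~\eqref{eq: grad u1 u2 X_t} with $C_2^2=2M_1^2(1+C_1^2/\theta^2)$, after using $C_1^2\le2$. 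For~\eqref{eq: grad u1 u2 u3 X_t deterministic}, crudely bounding $\int_0^t e^{-\int_s^t\rho}\,\ds\le t$ gives the \emph{pathwise} bound $|\sU_{u,v}^x(t)|\le M_1(t+1)|u||v|$; feeding this, the deterministic bound $|\sU_u^x|\le|u|$, the estimates~\eqref{eq: Assumption 2}--\eqref{eq: Assumption 3} (each yielding a factor $M_i(1+\rho(X_s^x))$), the symmetry of the second-order term (which collapses $\tfrac14\sum_{\pi\in\sym\{u,v,w\}}$ to $\tfrac32 M_1^2(s+1)$ times $(1+\rho(X_s^x))|u||v||w|$), and $\|\Psi_{s,t}^x\|_{\op}\le e^{-\int_s^t\rho}$ into the variation-of-constants formula for~\eqref{eq: 3rd variation} gives $|\sU_{u,v,w}^x(t)|\le|u||v||w|\int_0^t e^{-\int_s^t\rho(X_r^x)\,\dr}(1+\rho(X_s^x))\,\phi(s)\,\ds$ with $\phi(s)=\tfrac32 M_1^2(s+1)+M_2$ nonnegative and nondecreasing; splitting off the $\rho(X_s^x)$-term and integrating by parts (using $\phi'\ge0$, $\phi(0)\ge0$) bounds the right-hand side by $|u||v||w|\,(t+1)\phi(t)=|u||v||w|\,P(t)$.

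The main obstacle is the uniform-in-$x$ exponential-moment estimate needed for~\eqref{eq: grad u X_t}. Lemma~\ref{lemma: geom erg} controls bounded starting points only, and for $|x|$ large one must turn the super-quadratic lower bound on $V$ from~\eqref{eq: Assumption 1} and~\eqref{eq: rho bounds} into a quantitative return-time statement --- this is exactly where the constants $t_0$, $t_1$, $\chi$ and the function $J$ come from. Once that estimate is in hand, the remaining bounds reduce to routine use of the variation-of-constants representation, the pathwise contraction of $\Psi$, and the coercivity and telescoping identities for $\rho$ used above.
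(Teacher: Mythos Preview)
Your proposal is correct and follows essentially the same route as the paper: the paper also obtains the pathwise contraction~\eqref{eq: grad u X_t deterministic} from the differential inequality for $|\sU_u^x|^2$, reduces~\eqref{eq: grad u X_t} to the uniform exponential-moment bound (packaged there as Lemma~\ref{lemma: E(t) bound}), and then handles the higher variations via the same telescoping identity $\int_0^t\rho(X_s^x)e^{-\int_s^t\rho}\,\ds\le1$ together with the pathwise bound $Q_t^x:=\int_0^t e^{-\int_s^t\rho}\,\ds\le t$. The only cosmetic differences are that the paper phrases the integral representations as a Gr\"onwall-type lemma rather than variation-of-constants, bounds $\EE(Q_t^x)^2$ by Cauchy--Schwarz on the product $e^{-\int_{s_1}^t\rho}e^{-\int_{s_2}^t\rho}$ (giving exactly $C_1^2/\theta^2$) rather than by conditioning, and---for the large-$|x|$ part of the exponential-moment estimate---uses the cleaner dichotomy on the hitting time $\tau=\inf\{s:|X_s|\le B\}$ (either $\tau>t/2$, in which case $\rho\ge\chi$ on $[0,t/2]$ already, or $\tau\le t/2$, in which case strong Markov feeds into the small-$|x|$ bound) rather than a separate return-time argument.
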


The analogue of Proposition~\ref{prop: mean-square bounds} in~\cite{FangShaoXu2019} is Lemma 5.2. While the proof of the latter is
essentially immediate under their assumption of strict convexity of $V$,
the proof of Proposition~\ref{prop: mean-square bounds} under Assumption~\ref{assumption: assumptions on V} is somewhat more involved. A key
ingredient in its proof is the following lemma, whose proof is deferred to the end of this section. 
\begin{lemma}\label{lemma: E(t) bound}
  If $V$ satisfies Assumption~\ref{assumption: assumptions on V}, then for all $x\in\RR^d$ and $0\le s\le t<\infty$ 
  $$
  \EE \exp\left(-2\int_s^t \rho(X_r^x) \dr \right) \le C_2^2 e^{-2\theta (t-s)}.
  $$
\end{lemma}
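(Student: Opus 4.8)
The plan is to reduce the claim to a sub-exponential decay estimate for the amount of time the process spends in the exterior region $\{|y|\ge B\}$, and to prove that estimate via a sub-multiplicativity (renewal) argument whose only non-trivial input is the geometric ergodicity bound of Lemma~\ref{lemma: geom erg}. Write $\chi:=c_1B^k$, as in Proposition~\ref{prop: mean-square bounds}; since $k\ge0$, equation~\eqref{eq: rho bounds} gives $\rho(y)\ge c_1|y|^k\,\ind(|y|\ge B)\ge\chi\,\ind(|y|\ge B)$ for all $y$, so it suffices to bound
\begin{equation*}
  F(\tau):=\sup_{z\in\RR^d}\EE\exp\!\left(-2\chi\int_0^{\tau}\ind(|X_r^z|\ge B)\,\dr\right),\qquad\tau\ge0 .
\end{equation*}
Indeed, conditioning on $\sF_s$ and using the (time-homogeneous) Markov property of the solution of~\eqref{eq: overdamped langevin} gives $\EE\exp(-2\int_s^t\rho(X_r^x)\,\dr)\le\EE\exp(-2\chi\int_s^t\ind(|X_r^x|\ge B)\,\dr)\le F(t-s)$; conditioning instead at time $\tau_1$ shows $F$ is sub-multiplicative, $F(\tau_1+\tau_2)\le F(\tau_1)F(\tau_2)$, and plainly $F\le1$. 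Hence, once we prove $F(2t_0)\le J(2t_0)$, iterating over blocks of length $2t_0$ yields, for every $\tau\ge0$,
\begin{equation*}
  F(\tau)\le F(2t_0)^{\lfloor\tau/(2t_0)\rfloor}\le J(2t_0)^{\tau/(2t_0)-1}=J(2t_0)^{-1}\,e^{(\tau/(2t_0))\log J(2t_0)}\le C_1^2\,e^{-2\theta\tau},
\end{equation*}
the last step using $t_1\ge2t_0$, $\log J(2t_0)<0$, and $\theta=\log(1/J(2t_0))/(2t_1)$; since $C_1^2=1/J(2t_0)\le C_2^2$, this together with $\EE\exp(-2\int_s^t\rho(X_r^x)\,\dr)\le F(t-s)$ gives the lemma.

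It remains to show $F(2t_0)\le J(2t_0)$. Fix $z\in\RR^d$ and set $L:=\int_0^{2t_0}\ind(|X_r^z|\ge B)\,\dr\in[0,2t_0]$. From $e^{-2\chi L}\le\ind(L\le 1/2)+e^{-\chi}\ind(L>1/2)$ we get $\EE e^{-2\chi L}\le1-(1-e^{-\chi})\PP(L>1/2)$, while Markov's inequality for the non-negative variable $2t_0-L$ gives $\PP(L>1/2)\ge(\EE L-1/2)/(2t_0-1/2)$. Since $J(2t_0)=1-(1-e^{-\chi})/(4t_0)$, the bound $F(2t_0)\le J(2t_0)$ will follow once we establish
\begin{equation*}
  \EE L\ge2 \qquad\text{for every }z\in\RR^d ,
\end{equation*}
because then $\PP(L>1/2)\ge(3/2)/(2t_0-1/2)\ge1/(4t_0)$.

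To estimate $\EE L$, fix a $1$-Lipschitz function $\psi$ with $\ind(|y|>B+1)\le\psi(y)\le\ind(|y|\ge B)$ (e.g.\ $\psi(y):=\min\{(|y|-B)^{+},1\}$), and put $p:=\mu(\{|y|>B+1\})$, which is strictly positive since $\mu$ has an everywhere-positive Lebesgue density; note $\mu(\psi)\ge p$. By Lemma~\ref{lemma: geom erg}, for every $w$ with $|w|\le B$ and every $r\ge0$,
\begin{equation*}
  \PP(|X_r^w|\ge B)\ge P_r\psi(w)\ge\mu(\psi)-2(|w|+m_1(\mu))e^{-\eta r}\ge p-2(B+m_1(\mu))e^{-\eta r},
\end{equation*}
so that $\int_0^{T}\PP(|X_r^w|\ge B)\,\dr\ge Tp-2(B+m_1(\mu))/\eta$ for all $T\ge0$. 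Now fix an arbitrary $z$ and let $\sigma:=\inf\{r\ge0:|X_r^z|\le B\}$ (so $\sigma=0$ when $|z|\le B$). On $\{\sigma\ge2t_0\}$ we have $L=2t_0$, and $2t_0p=2+4(B+m_1(\mu))/\eta\ge2$ by the choice $t_0=(\eta+2B+2m_1(\mu))/(\eta p)$, so $L=2t_0\ge2/p\ge2$. On $\{\sigma<2t_0\}$, path-continuity gives $|X_\sigma^z|\le B$; since $\ind(|X_r^z|\ge B)=1$ for $r<\sigma$, the strong Markov property together with the last display (with $w=X_\sigma^z$ and $T=2t_0-\sigma$) yields
\begin{equation*}
  \EE[L\mid\sF_\sigma]\ge\sigma+(2t_0-\sigma)p-\frac{2(B+m_1(\mu))}{\eta},
\end{equation*}
and the right-hand side, being non-decreasing in $\sigma$ (as $p<1$), is at least its value at $\sigma=0$, namely $2t_0p-2(B+m_1(\mu))/\eta=2+2(B+m_1(\mu))/\eta\ge2$. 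Thus $\EE[L\mid\sF_\sigma]\ge2$ in all cases, and taking expectations gives $\EE L\ge2$.

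The one genuinely delicate point is the uniform-in-$z$ bound $\EE L\ge2$: Lemma~\ref{lemma: geom erg} carries a prefactor proportional to $|z|+m_1(\mu)$, so it is effective only for initial conditions in a bounded set, and the contribution of large $|z|$ is recovered by restarting the process at its first visit $\sigma$ to $\{|y|\le B\}$ through the strong Markov property, using that the occupation-time integrand equals $1$ for $r<\sigma$. The remaining ingredients — the reduction to the occupation time, the sub-multiplicativity and iteration of $F$, and the Markov-inequality step — are routine.
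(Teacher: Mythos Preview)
Your argument is correct and uses the same core ingredients as the paper: sub-multiplicativity of the supremum-over-starting-points quantity, an occupation-time estimate via the Lipschitz minorant $\psi$ of $\ind(|\cdot|\ge B)$ combined with the geometric-ergodicity bound of Lemma~\ref{lemma: geom erg}, and a first-hitting-time of $\{|y|\le B\}$ to restore uniformity in the initial condition. The organization is somewhat more direct than the paper's. The paper first bounds $\sup_{|x|\le B}E(x,t_0)\le J(t_0)$, then uses the hitting time $\tau$ to obtain $E(t)\le e^{-\chi t}+\sup_{|y|\le B}E(y,t/2)$ for all $x$, and finally chooses $t_1\ge 2t_0$ large enough that the additive term $e^{-\chi t_1}$ is absorbed into $J(2t_0)-J(t_0)$; only then does it iterate on blocks of length $t_1$. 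You instead bound the full occupation time $L$ on $[0,2t_0]$ uniformly in $z$ in one step (using $\sigma$ and the strong Markov property to show $\EE L\ge 2$), obtain $F(2t_0)\le J(2t_0)$ directly, and iterate on blocks of length $2t_0$; the inequality $t_1\ge 2t_0$ is then only used at the end to match the stated constant $\theta$. This avoids the auxiliary additive $e^{-\chi t}$ term and the second clause in the definition of $t_1$ altogether.

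One small caveat: your final sentence asserts $C_1^2=1/J(2t_0)\le C_2^2$ without justification, and with $C_2^2=2M_1^2(1+C_1^2/\theta^2)$ this inequality is not obvious (it can fail if $\theta^2>2M_1^2$ and $C_1^2$ is large). In fact the paper's own proof arrives at $E(t)\le J(2t_0)^{-1}J(2t_0)^{t/t_1}=C_1^2e^{-2\theta t}$, and it is this bound with $C_1^2$ that is actually invoked in the proof of Proposition~\ref{prop: mean-square bounds} (see equations~\eqref{eq: bound 4th moment grad u X} and~\eqref{eq: inequality f(t) squared}); the $C_2^2$ in the statement of Lemma~\ref{lemma: E(t) bound} appears to be a typographical slip for $C_1^2$. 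Your argument establishes the $C_1^2$ bound, which is what is needed.
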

In addition, we will also require the following Gr\"{o}nwall-type lemma.
\begin{lemma}\label{lemma: Gronwall type}
    Let $u: \RR \to \RR^{d}$ be such that $|u(t)|^{2} \in C^{1}(\RR, \RR)$. Suppose it satisfies the following differential inequality
    \begin{equation}
        \frac{\diff}{\dt}|u(t)|^{2} \leq 2a(t)|u(t)|^{2} + 2b(t)|u(t)|, \qquad \forall t \in (0, T),
    \end{equation}
    where $T \in (0, \infty)$, $a, b \in L^{1}([0, T])$ and $b \geq 0$. If $|u(0)| = 0$ then
    \begin{equation}
        |u(t)| \leq \int_{0}^{t}b(s)\exp\left(\int_{s}^{t}a(r)\, \dr\right)\, \ds, \qquad \forall t \in [0, T).
    \end{equation}
\end{lemma}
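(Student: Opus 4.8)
The plan is to convert the quadratic-in-$|u|$ differential inequality into a genuinely \emph{linear} first-order differential inequality for a smooth surrogate of $|u(t)|$, and then to solve that by an integrating factor. The only real subtlety is that $t\mapsto|u(t)|$ need not be differentiable where $u$ vanishes, so a small regularisation is needed before one can differentiate the surrogate.

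First I would put $y(t):=|u(t)|^{2}$, which is $C^{1}(\RR,\RR)$ by hypothesis and satisfies $y'\le 2a y+2b\sqrt{y}$ on $(0,T)$. For $\epsilon>0$ set $v_{\epsilon}(t):=\sqrt{y(t)+\epsilon}$; since $y+\epsilon$ is $C^{1}$ and strictly positive, $v_{\epsilon}\in C^{1}(\RR,\RR)$ with $v_{\epsilon}'=y'/(2v_{\epsilon})$. Dividing the inequality for $y$ by $2v_{\epsilon}(t)>0$, bounding $b(t)\sqrt{y(t)}/v_{\epsilon}(t)\le b(t)$ (this uses $b\ge 0$), and rewriting $a(t)y(t)/v_{\epsilon}(t)=a(t)v_{\epsilon}(t)-a(t)\epsilon/v_{\epsilon}(t)\le a(t)v_{\epsilon}(t)+|a(t)|\sqrt{\epsilon}$ (since $\epsilon/v_{\epsilon}\le\sqrt{\epsilon}$), I arrive at
\begin{equation*}
  v_{\epsilon}'(t)\le a(t)\,v_{\epsilon}(t)+b(t)+|a(t)|\sqrt{\epsilon},\qquad t\in(0,T).
\end{equation*}

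Next I would integrate this with the integrating factor $e^{-A(t)}$, where $A(t):=\int_{0}^{t}a(r)\,\dr$ is absolutely continuous on $[0,T]$ since $a\in L^{1}([0,T])$. Then $t\mapsto v_{\epsilon}(t)e^{-A(t)}$ is absolutely continuous with derivative at most $e^{-A(t)}(b(t)+|a(t)|\sqrt{\epsilon})$ almost everywhere; integrating from $0$ to $t$, and using $v_{\epsilon}(0)=\sqrt{\epsilon}$ because $|u(0)|=0$, yields for every $t\in[0,T)$
\begin{equation*}
  v_{\epsilon}(t)\le\sqrt{\epsilon}\,e^{A(t)}+\int_{0}^{t}b(s)\,e^{A(t)-A(s)}\,\ds+\sqrt{\epsilon}\int_{0}^{t}|a(s)|\,e^{A(t)-A(s)}\,\ds.
\end{equation*}
On the compact interval $[0,t]\subset[0,T)$ the function $A$ is bounded and $|a|\in L^{1}$, so the two $\sqrt{\epsilon}$-terms vanish as $\epsilon\downarrow 0$; since $|u(t)|=\sqrt{y(t)}\le v_{\epsilon}(t)$ for every $\epsilon>0$ and $A(t)-A(s)=\int_{s}^{t}a(r)\,\dr$, letting $\epsilon\downarrow 0$ gives exactly the claimed bound.

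The main (and essentially the only) obstacle is the non-differentiability of $|u|$ at the zeros of $u$, which is exactly what the $\sqrt{y+\epsilon}$ regularisation removes; once past it, the rest is the standard integrating-factor manipulation for linear first-order inequalities, carried out at the level of absolutely continuous functions so that only the $L^{1}$-integrability of $a$ and $b$ is needed. I would also point out that $b\ge 0$ enters twice: once to bound $b\sqrt{y}/v_{\epsilon}\le b$, and once to ensure the surviving term $\int_{0}^{t}b(s)e^{A(t)-A(s)}\,\ds$ is the desired nonnegative bound on $|u(t)|$.
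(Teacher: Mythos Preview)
Your proof is correct. The paper takes a genuinely different route: it fixes $t$ and sets $\tau:=\sup\{0\le s\le t:|u(s)|=0\}$; on $(\tau,t]$ the function $|u|$ is strictly positive and hence $C^{1}$ (since $|u|^{2}$ is), so dividing the hypothesis by $2|u|$ gives the linear inequality $\frac{\diff}{\diff s}|u(s)|\le a(s)|u(s)|+b(s)$ directly, with initial value $|u(\tau)|=0$, and a standard linear Gr\"onwall/Jones inequality finishes. Thus the paper handles the non-differentiability of $|u|$ at its zeros by \emph{excising} them (working only on the last zero-free subinterval), whereas you handle it by \emph{smoothing} them via $v_{\epsilon}=\sqrt{|u|^{2}+\epsilon}$ and then letting $\epsilon\downarrow 0$. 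Your argument is self-contained and works uniformly at the level of absolutely continuous functions; the paper's is shorter once one is willing to cite the Gr\"onwall/Jones bound, and its localisation trick avoids having to track the $\sqrt{\epsilon}\,|a|$ error term.
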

\begin{proof}
     Fix $t \in (0, T)$ and let $\tau := \sup\{0 \leq s \leq t: |u(s)| = 0\}$. If $\tau = t$ then the statement holds trivially since $b
     \geq 0$. If $\tau < t$ then $|u(t)|$ is differentiable on $(\tau, t]$ and gives 
    \begin{equation}
        \frac{\diff}{\ds}|u(s)| \leq a(s)|u(s)| + b(s), \qquad s \in (\tau, t].
    \end{equation}
    Since $|u(\tau)| = 0$, by the Generalised Jones Inequality \cite[Theorem 1.2.2]{Qin2016} we have
    \begin{equation}
         |u(t)| \leq \int_{\tau}^{t}b(s)\exp\left(\int_{s}^{t}a(r)\, \dr\right)\, \ds \leq  \int_{0}^{t}b(s)\exp\left(\int_{s}^{t}a(r)\, \dr\right)\, \ds,
    \end{equation}
    using $b \geq 0$. This completes the proof.
\end{proof}

Armed with Lemmas~\ref{lemma: E(t) bound} and~\ref{lemma: Gronwall type}, we are now ready to prove Proposition~\ref{prop: mean-square bounds}.
\begin{proof}[Proof of Proposition~\ref{prop: mean-square bounds}]
  We begin by noting that combining~\eqref{eq: 1st variation} and~\eqref{eq: Assumption 1} yields
\begin{equation}\label{eq: differential equations}
    \frac{\diff}{\dt}|\,\sU_u^x(t)|^{2} \le -2\,\rho(X_{t}^{x})\,|\,\sU_u^x(t)|^2,
\end{equation}
and so it follows from Gr\"{o}nwall's lemma that
\begin{equation}\label{eq: Upper bound grad u X}
    |\,\sU_u^x(t)|^{2} \leq \exp\left(-2\int_{0}^{t}\rho(X_{s}^{x})\, \ds\right)|u|^{2}, \qquad t\ge0.
\end{equation} 
Since $\rho$ is nonnegative, we immediately obtain the first stated result, \eqref{eq: grad u X_t deterministic}.
Moreover, by taking the expectation and applying Lemma~\ref{lemma: E(t) bound}, equation~\eqref{eq: Upper bound grad u X} also yields
\begin{equation}\label{eq: bound 4th moment grad u X}
  \begin{split}
    \EE\,|\,\sU_u^x(t)|^{2} &\leq E(t) |u|^2,\\
    &\le C_1^2\,e^{-2\theta t}\,|u|^2,
    \end{split}
\end{equation}
which establishes the second stated result, \eqref{eq: grad u X_t}. 

We now consider the second variation equation. It follows from~\eqref{eq: 2nd variation} and the Cauchy-Schwarz inequality that
\begin{equation}
  \begin{split}
    \frac{1}{2}\frac{\diff}{\dt} |\,\sU_{u,v}^x(t)|^2 &=
    \left\<\sU_{u,v}^x(t), Dg(X_t^x)[\sU_{u,v}^x(t)]\right\> + \left\<\,\sU_{u,v}^x(t),D^2g(X_t^x)[\,\sU_u^x(t),\,\sU_v^x(t)]\right\>\\
    &\le
    \left\<\sU_{u,v}^x(t), Dg(X_t^x) \sU_{u,v}^x(t) \right\> + |\,\sU_{u,v}^x(t)|\,|D^2g(X_t^x)[\,\sU_u^x(t),\,\sU_v^x(t)]|
  \end{split}
\end{equation}
Applying~\eqref{eq: Assumption 1} and~\eqref{eq: Assumption 2} then yields
\begin{equation}
  \begin{split}
    \frac{1}{2}\frac{\diff}{\dt} |\,\sU_{u,v}^x(t)|^2
    &\le -\rho(X_t^x) |\,\sU_{u,v}^x(t)|^2 + M_1(1+\rho(X_t^x)\,|\,\sU_u^x(t)|\,|\,\sU_v^x(t)|\,|\,\sU_{u,v}^x(t)|,\\
    &\le -\rho(X_t^x) |\,\sU_{u,v}^x(t)|^2 + M_1\,|u|\,|v|\,(1+\rho(X_t^x)\,|\,\sU_{u,v}^x(t)|,\\
  \end{split}
\end{equation}
where in the last step we made use of~\eqref{eq: grad u X_t deterministic}. It now follows from Lemma~\ref{lemma: Gronwall type} that
    \begin{equation}\label{eq: inequality u(t)}
    \begin{split}
        |\,\sU^{x}_{u,v}(t)| &\leq M_{1}|u||v|\int_{0}^{t}(1 + \rho(X_{s}^{x}))\exp\left(-\int_{s}^{t}\rho(X_{r}^{x})\, \dr\right)\, \ds,\\
    \end{split}
    \end{equation}
    But since $\rho(X_s^x)$ is a continuous function of $s$, the fundamental theorem of calculus implies
    \begin{equation}
      \label{eq: rho ftc step}
      \int_{0}^{t}\rho(X_{s}^{x})\exp\left(-\int_{s}^{t}\rho(X_{r}^{x})\, \dr\right)\, \ds = 1 -
    \exp\left(-\int_{0}^{t}\rho(X_{r}^{x})\, \dr\right) <  1.
    \end{equation}
    Therefore, setting
    \begin{equation}
      Q_t^x:=\int_{0}^{t}\exp\left(-\int_{s}^{t}\rho(X_{r}^{x})\right)\,\ds,
    \end{equation}
    we obtain
    \begin{equation}
      \label{eq: second variation in terms of Q}
      |\,\sU^{x}_{u,v}(t)| \leq M_{1}|u||v|\,(1+Q_t^x).
    \end{equation}
    Now, the Cauchy-Schwarz inequality and Lemma~\ref{lemma: E(t) bound} imply that
    \begin{equation}\label{eq: inequality f(t) squared}
        \begin{split}
            \EE (Q_t^x)^{2} &= \EE \int_{0}^{t}\int_{0}^{t}\exp\left(-\int_{s_{1}}^{t}\rho(X_{r}^{x})\,
            \dr\right)\exp\left(-\int_{s_{2}}^{t}\rho(X_{r}^{x})\, \dr\right)\, \diff s_{1} \diff s_{2},\\ 
            & \leq \int_{0}^{t}\int_{0}^{t}\left[\EE\exp\left(-2\int_{s_{1}}^{t}\rho(X_{r}^{x})\, \dr\right) \EE
              \exp\left(-2\int_{s_{2}}^{t}\rho(X_{r}^{x})\, \dr\right)\right]^{1/2}\, \diff s_{1} \diff s_{2},\\ 
            &\leq C_{1}^{2}\int_{0}^{t}\int_{0}^{t}e^{-\theta(t-s_{1})}e^{-\theta(t-s_{2})}\diff s_{1}\diff s_{2},\\
            &= C_{1}^{2}\left(\int_{0}^{t}e^{-\theta(t-s)}\, \ds\right)^{2}, \\
            &\leq \frac{C_{1}^{2}}{\theta^{2}}.
        \end{split}
    \end{equation}
    Combining \eqref{eq: second variation in terms of Q} and \eqref{eq: inequality f(t) squared} then yields
    \begin{equation}
    \begin{split}
        \EE|\,\sU^{x}_{u,v}(t)|^{2} &\le M_{1}^{2}|u|^{2}|v|^{2}\EE(1 + Q_t^x)^{2},\\
        &\leq 2M_{1}^{2}|u|^{2}|v|^{2}(1 + \EE (Q_t^x)^{2}),\\
        &\leq 2M_{1}^{2}\left(1 + \frac{C_{1}^{2}}{\theta^{2}}\right)|u|^{2}|v|^{2},
    \end{split}
    \end{equation}
    which establishes the third stated result, \eqref{eq: grad u1 u2 X_t}.

    It remains only to bound the solution of the third variation equation. It follows from~\eqref{eq: 3rd variation} and the Cauchy-Schwarz
    inequality, followed by application of equations~\eqref{eq: Assumption 1}, \eqref{eq: Assumption 2} and \eqref{eq: Assumption 3}, that
    \begin{equation}
      \begin{split}
        \frac{1}{2}\frac{\diff}{\dt}|\,\sU_{u,v,w}^x(t)|^2
        &= \left\<\,\sU_{u,v,w}^x(t), Dg(X_t^x)\,\sU_{u,v,w}^x(t)\right\> \\
      &\quad +\frac{1}{4}\sum_{\pi\in\sym\{u,v,w\}}\left\<\,\sU_{u,v,w}^x(t), D^2
      g(X_t^x)[\,\sU_{\pi(u)}^x(t),\sU_{\pi(v),\pi(w)}^x(t)]\right\> \\
      &\quad + \left\<\,\sU_{u,v,w}^x(t),D^3g(X_t^x)[\,\sU_u^x(t),\,\sU_v^x(t),\,\sU_w^x(t)]\right\>,\\
      &\le \left\<\,\sU_{u,v,w}^x(t), Dg(X_t^x)\,\sU_{u,v,w}^x(t)\right\> \\
      &\quad +\frac{1}{4}\sum_{\pi\in\sym\{u,v,w\}}|\,\sU_{u,v,w}^x(t)|\, |D^2 g(X_t^x)[\,\sU_{\pi(u)}^x(t),\sU_{\pi(v),\pi(w)}^x(t)]| \\
      &\quad + |\,\sU_{u,v,w}^x(t)|\,|D^3g(X_t^x)[\,\sU_u^x(t),\,\sU_v^x(t),\,\sU_w^x(t)]|,\\
      &\le -\rho(X_t^x)\,|\,\sU_{u,v,w}^x(t)|^2\\
      &\quad +\frac{M_1}{4}\left(1+\rho(X_t^x)\right) \,|\,\sU_{u,v,w}^x(t)|\,\sum_{\pi\in\sym\{u,v,w\}}|\,\sU_{\pi(u)}^x(t)|\,|\,\sU_{\pi(v),\pi(w)}^x(t)| \\
      &\quad + M_2|\,\sU_{u,v,w}^x(t)|\,\left(1+\rho(X_t^x)\right) |\,\sU_u^x(t)|\,|\,\sU_v^x(t)|\,|\,\sU_w^x(t)|.\\
      \end{split}
    \end{equation}
    Now utilising equation~\eqref{eq: grad u X_t deterministic}, and equation~\eqref{eq: second variation in terms of Q} together with the
    fact that $Q_t^x\le t$, results in
    \begin{equation}
      \begin{split}
        \frac{1}{2}\frac{\diff}{\dt}|\,\sU_{u,v,w}^x(t)|^2
        &\le
        -\rho(X_t^x) |\,\sU_{u,v,w}^x(t)|^2 \\
        &\quad + \left[M_2+\frac{3}{2}M_1^2(1+t)\right]\left(1+\rho(X_t^x)\right)\,|u|\,|v|\,|w|\,|\,\sU_{u,v,w}^x(t)|.
        \end{split}
    \end{equation}
    From Lemma \ref{lemma: Gronwall type} and equation~\eqref{eq: rho ftc step} we then conclude that
    \begin{equation}
    \begin{split}
      |\sU^{x}_{u,v,w}(t)| & \leq |u||v||w|\int_{0}^{t}
      \left[M_2+\frac{3}{2}M_1^2(1+s)\right]\left(1+\rho(X_s^x)\right) \exp\left(-\int_{s}^{t}\rho(X_{r}^{x})\, \dr\right)\, \ds,\\
      &\le
      |u||v||w|\left[M_2+\frac{3}{2}M_1^2(1+t)\right]
      \int_{0}^{t} \left(1+\rho(X_s^x)\right) \exp\left(-\int_{s}^{t}\rho(X_{r}^{x})\, \dr\right)\, \ds,\\
      &\le
      |u||v||w|\left[M_2+\frac{3}{2}M_1^2(1+t)\right](1+t),      
    \end{split}
    \end{equation}
    which establishes the final stated result, \eqref{eq: grad u1 u2 u3 X_t deterministic}.
\end{proof}

\begin{proof}[Proof of Lemma~\ref{lemma: E(t) bound}]
  It is convenient in this proof to work on the canonical space $(C[0,\infty)^d,\sB(C[0,\infty)^d))$, where
  $C[0,\infty)^d$ is the space of all continuous maps $\omega:[0,\infty)\to\RR^d$, and $\sB(C[0,\infty)^d)$ is the $\sigma$-algebra
  generated by finite-dimensional cylinder sets. Moreover, for each $t\ge0$ let $X_t:C[0,\infty)^d\to\RR^d$ denote the coordinate map
  $X_t(\omega)=\omega(t)$, and let $\sB_t:=\sigma(X_s: s\le t)$. 

  Since, by Part~\ref{prop_part: well posed} of Proposition~\ref{prop: sde properties}, equation~\eqref{eq: overdamped langevin} is
  well posed, it induces a family of measures $(\PP_x)_{x\in\RR^d}$ on $(C[0,\infty)^d,\sB(C[0,\infty)^d))$ and we then have, for each
      $x\in\RR^d$ and $t\ge0$ that 
      \begin{equation}
        \label{eq: unconscious statistician}
      \EE\left[\exp\left(-2\int_{s}^{t}\rho(X_{r}^x)\, \dr\right)\right]
      =
      \EE_x\left[\exp\left(-2\int_{s}^{t}\rho(X_{r})\, \dr\right)\right].
    \end{equation}
    where the expectation $\EE_x$ on the right-hand side is with respect to the measure $\PP_x$.
    
  Now, for $x\in\RR^d$ and $t\ge0$, define
  \begin{equation}
  E(x, t) := \EE_x\left[\exp\left(-2\int_{0}^{t}\rho(X_{s})\, \ds\right)\right],
  \end{equation}
  and
  \begin{equation}
    E(t) := \sup_{x\in\RR^d} \EE_x\left[\exp\left(-2\int_{0}^{t}\rho(X_{s})\, \ds\right)\right]
  \end{equation}
  For any $0\le s \le t <\infty$, it then follows from the Markov property and time-homogeneity that
  \begin{equation}
    \label{eq: reducing stated result to E(t) bound}
      \begin{split}
      \EE_x \exp\left(-2\int_s^t \rho(X_r) \dr \right) &= \EE_x \exp\left(-2\int_0^{t-s} \rho(X_{r+s}) \dr \right)\\
      &=\EE_x \EE_x \left[\exp\left(-2\int_0^{t-s} \rho(X_{r+s}) \dr \right)\Bigg| \sB_s \right] \\
      &=\EE_x \EE_{X_s} \exp\left(-2\int_0^{t-s} \rho(X_{r}) \dr \right) \\
      &\le E(t-s).
      \end{split}
    \end{equation}
  It therefore suffices to show that $E(t)$ decays exponentially.
  We begin by first establishing submultiplicativity of $E(t)$. Applying again the Markov property we find
    \begin{equation}\label{eq: bound on E(x,t+s)}
        \begin{split}
            E(x, t+s) &= \EE_x\left[\exp\left(-2\int_{0}^{t + s}\rho(X_{r})\, \dr\right)\right],\\
            &= \EE_x\left[\exp\left(-2\int_{0}^{t}\rho(X_{r})\, \dr\right)
              \EE_x\left[\exp\left(-2\int_{t}^{t + s}\rho(X_{r})\, \dr\right)\bigg|\sB_{t}\right]\right],\\
            &= \EE_x\left[\exp\left(-2\int_{0}^{t}\rho(X_{r})\, \dr\right)
              \EE_{X_t}\left[\exp\left(-2\int_{0}^{ s}\rho(X_{r})\, \dr\right)\right]\right],\\ 
            &\leq E(x,t)E(s).
        \end{split}
    \end{equation}
    Taking the supremum over $x\in\RR^d$ then establishes submultiplicativity
    \begin{equation}\label{eq: E t is sub multiplicative}
      E(t + s) \leq E(t)E(s).
      \end{equation}
    Moreover, since $E(s)\le1$ for all $s\ge0$, equation~\eqref{eq: bound on E(x,t+s)} also implies that for all $s, t \geq 0$
    \begin{equation}\label{eq: E x t is decreasing}
        E(x, t+s) \leq E(x, t).
    \end{equation}
    Our strategy now is to bound $E(t_1)$, with $t_1$ as given in Proposition~\ref{prop: mean-square bounds}. Combined with submultiplicativity~\eqref{eq: E t is
      sub multiplicative}, this will then imply $E(t)$ decays exponentially.

    To this end, we define the event $\Lambda_{t} := \{\int_{0}^{t}\ind(|X_{s}| \geq B)\, \ds\ > 1/2\}$, and 
    establish a lower bound on $\PP_x(\Lambda_{t})$. Let $\psi:\RR^d\to\RR$ be defined as follows
    \begin{equation}
        \psi(x):= \begin{cases}
            0, & |x| \leq B,\\
            |x| - B, & B<|x|\leq B+1,\\ 
            1, &  \text{otherwise}.
        \end{cases}
    \end{equation}
    Then $\psi \in \lipone$, so that setting $K(a):=2\left(a + m_{1}(\mu)\right)$
    Lemma \ref{lemma: geom erg} implies
    \begin{equation}
            \EE_x[\psi(X_{t})] \geq \mu(\psi) - K(|x|)e^{-\eta t}.
    \end{equation}
    And since $\psi(x) \leq \ind(|x| \geq B)$, we then have
    \begin{equation}
        \begin{split}
            \EE_x\left[\int_{0}^{t}\ind(|X_{s}| \geq B)\, \ds\right] &\geq \EE_x\left[\int_{0}^{t}\psi(X_{s})\, \ds\right],\\
            &\geq t\,\mu(\psi) - \frac{1}{\eta}K(|x|),\\
            &\geq t\,\mu(\{x\in\RR^{d}: |x| > B+ 1\}) - \frac{1}{\eta}K(|x|).
        \end{split}
    \end{equation}
    In particular, taking $t=t_0$, with $t_0$ as given in Proposition~\ref{prop: mean-square bounds},
    it follows that for all $x\in\RR^d$ with $|x|\le B$ we have
    \begin{equation}
        \begin{split}
          1 &\leq \EE_x\left[\int_{0}^{t_{0}}\ind(|X_{s}| \geq B)\, \ds\right]\\
          &=\EE_x\left[\int_{0}^{t_{0}}\ind(|X_{s}| \geq B)\, \ds\bigg|\Lambda_{t_{0}}\right]\PP_x(\Lambda_{t_{0}}) +
            \EE_x\left[\int_{0}^{t_{0}}\ind(|X_{s}| \geq B)\, \ds\bigg|\Lambda_{t_{0}}^{c}\right]\PP_x(\Lambda_{t_{0}}^{c}),\\ 
            &\leq t_{0}\,\PP_x(\Lambda_{t_{0}}) + \frac{1}{2}.
        \end{split}
    \end{equation}
    Rearranging, we then obtain
    \begin{equation}\label{eq: lower bound on P(Lambda)}
      \PP_x(\Lambda_{t_{0}}) \geq \frac{1}{2t_{0}}, \qquad x\in\RR^d, \quad|x|\le B.
    \end{equation}

    Combining~\eqref{eq: lower bound on P(Lambda)} and~\eqref{eq: rho bounds}, it then follows that for all $x\in\RR^d$ with $|x|\le B$
    \begin{equation}
        \begin{split}
            E(x, t_{0}) & = \EE_x\left[\exp\left(-2\int_{0}^{t_{0}}\rho(X_{s})\, \ds\right)\ind(\Lambda_{t_{0}})\right] +
            \EE_x\left[\exp\left(-2\int_{0}^{t_{0}}\rho(X_{s})\, \ds\right)\ind(\Lambda_{t_{0}}^{c})\right],\\ 
            &\leq e^{-\chi}\,\PP_x(\Lambda_{t_{0}}) + \PP_x(\Lambda_{t_{0}}^{c}),\\
            &= 1 - (1-e^{-\chi})\,\PP_x(\Lambda_{t_{0}}) \\
            &\leq 1-\frac{1-e^{-\chi}}{2t_{0}}\\
            &= J(t_0),
        \end{split}
    \end{equation}
    and therefore
    \begin{equation}\label{eq: small x E bound}
    \sup_{|x| \leq B}E(x, t_{0}) \leq J(t_{0}).
    \end{equation}

    Next, we use the bound~\eqref{eq: small x E bound} to derive a bound on $E(x,t)$ that holds for all $x\in\RR^d$ and sufficiently large
    $t$. To achieve this, we define the stopping time
    \begin{equation}
      \tau := \inf\{s\geq 0:|X_{s}| \leq B\}. 
    \end{equation}
    By then partitioning $E(x,t)$ into expectations on the events $\{\tau > t/2\}$ and  $\{\tau \leq t/2\}$
    we obtain
    \begin{equation}\label{Eq: first bound of E}
    \begin{split}
        E(x, t) &= \EE_x\left[\exp\left(-2\int_{0}^{t}\rho(X_{s})\, \ds\right)\ind(\tau > t/2)\right] \\
        & \quad + \EE_x\left[\exp\left(-2\int_{0}^{t}\rho(X_{s})\, \ds\right)\ind(\tau \leq t/2)\right],\\
        &\leq e^{-\chi t} + \EE_x\left[\exp\left(-2\int_{\tau}^{t}\rho(X_{s})\, \ds\right)\ind(\tau \leq t/2)\right],\\
        &\leq e^{-\chi t} + \EE_x\left[\exp\left(-2\int_{0}^{t-\tau}\rho(X_{s+\tau})\, \ds\right)\ind(\tau \leq t/2)\right],\\
        &\leq e^{-\chi t} + \EE_x\left[\exp\left(-2\int_{0}^{t/2}\rho(X_{s+\tau})\, \ds\right)\ind(\tau \leq t/2)\right],\\
        &= e^{-\chi t} + \EE_x\, \ind(\tau \leq t/2)\,\EE_x\left[\exp\left(-2\int_{0}^{t/2}\rho(X_{s+\tau})\, \ds\right)\Bigg|\sB_\tau\right].
    \end{split}
    \end{equation}
    Now, since equation~\eqref{eq: overdamped langevin} is well posed and $g=-\grad V$ is locally compact, the family
    $\{\PP_x\}_{x\in\RR^d}$ satisfies the strong Markov property~\cite[Theorem 5.4.20]{KaratzasShreve2014}. 
    It therefore follows that
    \begin{equation}\label{Eq: second bound of E}
    \begin{split}
      E(x,t) &\le e^{-\chi t} + \EE_x\, \ind(\tau \leq t/2)\,E(X_\tau,t/2)\\
      &\le e^{-\chi t} + \EE_x\, \ind(\tau \leq t/2)\,\sup_{|y|\le B} E(y,t/2)\\
      &\le e^{-\chi t} + \sup_{|y|\le B} E(y,t/2)
    \end{split}
    \end{equation}
    where in the second step we observed that continuity implies $|X_\tau|\le B$.
    We conclude from~\eqref{Eq: second bound of E} that for all $t\ge0$
    \begin{equation}
      \label{eq: E(t) bootstrap bound}
      E(t) \le e^{-\chi t} + \sup_{|y|\le B} E(y,t/2).
    \end{equation}
    Combining~\eqref{eq: E(t) bootstrap bound} with~\eqref{eq: E x t is decreasing} and~\eqref{eq: small x E bound} it follows that for all $t\ge 2t_0$
    \begin{equation}
      E(t) \le e^{-\chi t} + J(t_0).
    \end{equation}
    In particular, from the definition of $t_1$, it then immediately follows that
    \begin{equation}
      \label{eq: E t1 bound}
      E(t_1) \le J(2t_0). 
    \end{equation}

    Finally, for any $t\ge0$ set $n:=\lfloor t/t_1\rfloor$.
    Since $J(t)<1$, it then follows from~\eqref{eq: E t is sub multiplicative} and~\eqref{eq: E t1 bound} that
    \begin{equation}
      \label{eq: exponential decay of E(t)}
      \begin{split}
        E(t) &\le E(t_1)^n \\
        &\le J(2t_0)^n \\
        &\le \frac{1}{J(2t_0)}\, \left(J(2t_0)^{1/t_1}\right)^t\\
        &= C_2^2 e^{-2\theta t}.
      \end{split}
    \end{equation}
    Combining equations~\eqref{eq: unconscious statistician}, \eqref{eq: reducing stated result to E(t) bound} and~\eqref{eq: exponential
      decay of E(t)} yields the stated result.
\end{proof}

\section{Proving Proposition~\ref{lemma: derivative of semigroup}}\label{sec: semigroup bounds}
The key ingredients needed to prove Proposition~\ref{prop: mean-square bounds} are the following.
\begin{proposition}[\cite{Cerrai2001}]
  \label{prop: BEL} Let $V$ satisfy Assumption~\ref{assumption: assumptions on V}. For $u\in\RR^d$ and $t>0$, define
\begin{equation}\label{eq: Iu definition}
        I_{u}^{x}(t) := \frac{1}{\sqrt{2}t}\int_{0}^{t}\<\,\sU_{u}^x(s), \dbs \>.
\end{equation}
Then for any $\varphi \in C_{b}$, any $t>0$ and any $u,v,w\in\RR^d$ it holds that,
\begin{equation}\label{eq: semigroup chain rule}
  D_{u}P_{t}\varphi(x) = \EE\left\< \grad\varphi(X_{t}^{x}),\,\sU_{u}^x(t)\right\>,
\end{equation}
\begin{equation}\label{eq: BEL formula 2}
  \begin{split}
  D^{2} P_{t}\varphi(x)[u,v] &= \EE\left(D P_{t/2}\varphi(X_{t/2}^{x})[\sU_{u}^x(t/2)]I_{v}^{x}(t/2)\right) \\
  &\quad +\frac{2}{t}\EE\int_{0}^{t/2}D P_{t-s}\varphi(X_{s}^{x})[\sU_{u,v}^x(s)]\, \ds,
  \end{split}
\end{equation}
and 
\begin{equation}\label{eq: BEL formula 3}
  D^{3} P_{t}\varphi(x)[u,v,w] = K_1^{x,t}(u,v,w) + K_2^{x,t}(u,v,w) + K_3^{x,t}(u,v,w) + K_4^{x,t}(u,v,w) + K_4^{x,t}(v,u,w)
\end{equation}
where
\begin{align}
K_1^{x,t}(u,v,w)&:= \EE \,\left(D^{2} P_{t/2}\varphi(X_{t/2}^{x})[\,\sU_{u}^x(t/2),\sU_{v}^x(t/2)] I_{w}^{x}(t/2)\right) \\
K_2^{x,t}(u,v,w)&:= \EE \,\left(D P_{t/2}\varphi(X_{t/2}^{x})[\sU_{u,v}^x(t/2)]I_{w}^{x}(t/2)\right)\\ 
K_3^{x,t}(u,v,w)&:= \frac{2}{t}\EE\int_{0}^{t/2}D P_{t-s}\varphi(X_{s}^{x})[\,\sU_{u,v,w}^x(s)]\, \ds,\\
K_4^{x,t}(u,v,w)&:= \frac{2}{t}\EE\int_{0}^{t/2}D^{2} P_{t-s}\varphi(X_{s}^{x})[\sU_{u,v}^x(s), \,\sU_w^x(s)]\, \ds.
  \end{align}
\end{proposition}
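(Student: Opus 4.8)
The plan is to establish the three identities by combining differentiation under the expectation with Bismut's integration-by-parts identity, and then iterating; essentially all the analytic content lies in justifying the successive interchanges of $D_u$ and $\EE$, and for this Proposition~\ref{prop: sde properties} and Proposition~\ref{prop: mean-square bounds} supply exactly what is needed. Concretely, Part~\ref{prop_part: flow derivatives} of Proposition~\ref{prop: sde properties} gives that the flow is mean-square differentiable in $x$ with $D_u X_t^x = \sU_u^x(t)$, $D_v D_u X_t^x = \sU_{u,v}^x(t)$, $D_w D_v D_u X_t^x = \sU_{u,v,w}^x(t)$, and hence also $D_v\sU_u^x = \sU_{u,v}^x$ and $D_w\sU_{u,v}^x = \sU_{u,v,w}^x$; meanwhile \eqref{eq: grad u X_t deterministic}--\eqref{eq: grad u1 u2 u3 X_t deterministic} provide $L^2$ control (indeed deterministic control for the odd-order processes) of these variation processes, which both furnishes dominating functions and guarantees every stochastic integral below is a genuine $L^2$-martingale. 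For the first identity one simply differentiates $P_t\varphi(x)=\EE\varphi(X_t^x)$ past the expectation, which is licensed whenever $\varphi$ has bounded continuous gradient (the case $\varphi=P_s h$, $h\in\smoothlip$, needed in the application), using $|\sU_u^x(t)|\le|u|$; this yields~\eqref{eq: semigroup chain rule}. The engine for the higher orders is the Bismut formula: for $g\in C_b$ and $s>0$, $D_u P_s g(y)=\EE[g(X_s^y)\,I_u^y(s)]$ with $I_u^y$ as in~\eqref{eq: Iu definition}; this follows for smooth $g$ from~\eqref{eq: semigroup chain rule} together with the martingale $N_r:=P_{s-r}g(X_r^y)$, whose It\^o differential is $\diff N_r=\sqrt2\langle\grad P_{s-r}g(X_r^y),\diff B_r\rangle$ (the drift vanishing since $N$ is a martingale), via an integration by parts against the stochastic integral defining $I_u^y$, and then extends to all $g\in C_b$ by a density argument using $\sup_y\EE|I_u^y(s)|^2\le|u|^2/(2s)<\infty$.

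For the second identity, apply the Bismut formula with $g=P_{t/2}\varphi$ (which lies in $C_b\cap C^2$ by Part~\ref{prop_part: semigroup spatial differentiability} of Proposition~\ref{prop: sde properties}) and time $t/2$, giving $D_u P_t\varphi(x)=\EE[(P_{t/2}\varphi)(X_{t/2}^x)\,I_u^x(t/2)]$. Differentiating in direction $v$ inside the expectation, the integrand depends on $x$ through $X_{t/2}^x$ and through $\sU_u^x$ inside $I_u^x$, so using $D_v X_{t/2}^x=\sU_v^x(t/2)$ and $D_v\sU_u^x=\sU_{u,v}^x$ one obtains a sum of (i) $\EE[D P_{t/2}\varphi(X_{t/2}^x)[\sU_v^x(t/2)]\,I_u^x(t/2)]$ and (ii) a constant times $\EE[(P_{t/2}\varphi)(X_{t/2}^x)\int_0^{t/2}\langle\sU_{u,v}^x(r),\diff B_r\rangle]$. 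Term (i) is the first term of~\eqref{eq: BEL formula 2} after relabelling $u\leftrightarrow v$, which is legitimate by symmetry of $D^2$. For term (ii), recognise $(P_{t/2}\varphi)(X_{t/2}^x)$ as $N_{t/2}$ for the martingale $N_r:=P_{t-r}\varphi(X_r^x)$, apply It\^o's isometry to convert the product of stochastic integrals into $\sqrt2\,\EE\int_0^{t/2}D P_{t-r}\varphi(X_r^x)[\sU_{u,v}^x(r)]\,\dr$, and collect the prefactors to land exactly on the second term of~\eqref{eq: BEL formula 2}.

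For the third identity, differentiate~\eqref{eq: BEL formula 2} once more in direction $w$, again inside the expectation. Hitting the first term of~\eqref{eq: BEL formula 2} contributes three pieces, according to whether $D_w$ lands on $X_{t/2}^x$ (producing a $D^2 P_{t/2}\varphi$ factor, the $K_1$-type term), on the bracketed $\sU_u^x(t/2)$ (producing $\sU_{u,w}^x$, a $K_2$-type term), or on $I_v^x(t/2)$ (producing $D_w I_v^x$). The last piece is handled just as term (ii) above, but now the prefactor is $L_r:=D P_{t-r}\varphi(X_r^x)[\sU_u^x(r)]$ evaluated at $r=t/2$, and one checks that $L_r$ is itself a local martingale -- its drift cancels because $Dg=-\hess V$ is symmetric and $\grad P_{t-r}\varphi$ solves a backward parabolic equation with the same symmetric second-order part -- with $\diff L_r=\sqrt2\langle\hess P_{t-r}\varphi(X_r^x)\,\sU_u^x(r),\diff B_r\rangle$; It\^o's isometry then turns this piece into a $\tfrac2t\EE\int_0^{t/2}D^2 P_{t-s}\varphi(X_s^x)[\,\cdot\,,\,\cdot\,]\,\ds$ term, i.e.\ a $K_4$-type term. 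Hitting the second (time-integral) term of~\eqref{eq: BEL formula 2} contributes the $K_3$-type term (when $D_w$ lands on $\sU_{u,v}^x$, producing $\sU_{u,v,w}^x$) and a further $K_4$-type term (when it lands on $X_s^x$). Collecting these five pieces and relabelling using the symmetries of $D^2$, $D^3$ and $\sU_{u,v}=\sU_{v,u}$ rearranges them into the stated form~\eqref{eq: BEL formula 3}.

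The main obstacle is not any individual identity but the repeated rigorous justification of differentiation under the expectation, particularly for the terms carrying the stochastic integrals $I_u^x$: one must know that the flow and the variation processes are differentiable in the initial datum with locally uniform $L^2$-control, and that the relevant stochastic integrands lie in $L^2$ uniformly on the time intervals in question -- precisely the content of Propositions~\ref{prop: sde properties} and~\ref{prop: mean-square bounds} under Assumption~\ref{assumption: assumptions on V}. Alternatively, and more economically, the proposition may be quoted directly from the Bismut--Elworthy--Li theory of diffusion semigroups in~\cite{Cerrai2001}, once one verifies that $g=-\grad V$ is $C^3$ with polynomially bounded first, second and third derivatives -- immediate from~\eqref{eq: Assumption 1}--\eqref{eq: Assumption 3} -- and that the test functions lie in $C_b$; the bookkeeping above is then subsumed into that reference.
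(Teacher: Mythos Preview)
Your proposal is correct, and in its final paragraph it lands exactly on the paper's own proof: the paper does not carry out the detailed derivation you sketch, but simply notes that the chain rule~\eqref{eq: semigroup chain rule} is classical, invokes Lemma~\ref{lemma: Cerrai's hypotheses} to verify that Assumption~\ref{assumption: assumptions on V} implies Hypotheses~1.1--1.3 of~\cite{Cerrai2001}, and then quotes~\cite[Section~1.5]{Cerrai2001} for the Bismut--Elworthy--Li identities~\eqref{eq: BEL formula 2} and~\eqref{eq: BEL formula 3}.

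Your first-principles sketch is a genuinely different route. It is essentially a reconstruction of Cerrai's argument: differentiate the Bismut representation $D_u P_t\varphi(x)=\EE[(P_{t/2}\varphi)(X_{t/2}^x)\,I_u^x(t/2)]$ and use the martingale structure of $r\mapsto P_{t-r}\varphi(X_r^x)$ together with It\^o's isometry to convert the terms coming from $D_v I_u^x$ into time integrals. What this buys is a self-contained derivation that makes the roles of Propositions~\ref{prop: sde properties} and~\ref{prop: mean-square bounds} transparent, and it explains where the specific splitting into $K_1,\ldots,K_4$ comes from. What the paper's approach buys is brevity: once Lemma~\ref{lemma: Cerrai's hypotheses} is in hand, the proposition is a one-line citation. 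One small point worth flagging: your derivation of~\eqref{eq: semigroup chain rule} correctly restricts to $\varphi$ with bounded gradient, whereas the proposition as stated claims it for all $\varphi\in C_b$; as you note, this is harmless since the only application in the paper is to $h\in\smoothlip$.
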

\begin{proof}
  The chain rule, \eqref{eq: semigroup chain rule}, is classical; see e.g.~\cite[Section 3]{Cerrai2019} for a discussion.
  As shown in Lemma~\ref{lemma: Cerrai's hypotheses}, Assumption~\ref{assumption: assumptions on V}
    implies that~\cite[Hypotheses 1.1, 1.2 and 1.3]{Cerrai2001} are satisfied. It then follows from
    \cite[Section 1.5]{Cerrai2001} that the Bismut-Elworthy-Li formulae~\eqref{eq: BEL formula 2} and \eqref{eq: BEL formula 3} hold.
\end{proof}

We now prove the main result of this section.
\begin{proof}[Proof of Proposition~\ref{lemma: derivative of semigroup}]
  Fix $h \in \smoothlip$. It follows from~\eqref{eq: semigroup chain rule} that
    \begin{equation}
      \left|D_{u} P_{t}h(x)\right| = |\EE\<\grad h(X_{t}^{x}), \,\sU_u^x(t) \>| \le \EE|\,\sU_u^x(t)|.
    \end{equation}
    Applying equation~\eqref{eq: grad u X_t} of Proposition~\ref{prop: mean-square bounds} and Jensen's inequality then immediately
    yields~\eqref{eq: Ptphi derivative 1}.
    
    Now applying~\eqref{eq: Ptphi derivative 1} to \eqref{eq: BEL formula 2}, we obtain
    \begin{equation}
        \begin{split}
          |D^{2} P_{t}h(x)[u,v]|
          & \leq \EE\left(\left|D P_{t/2}h(X_{t/2}^{x})[\,\sU_u^x(t/2)]\right|\left|I_{v}^{x}(t/2)\right|\right)\\
          &\quad + \frac{2}{t}\EE\int_{0}^{t/2}\left|D P_{t-s}h(X_{s}^{x})[\,\sU_{u,v}^x(s)]\right|\, \ds,\\
          & \leq C_{1} \, e^{-\theta t/2}\,\EE\left(|\,\sU_u^x(t/2)||I_{v}^{x}(t/2)|\right)
          + \frac{2C_{1}}{t}\int_{0}^{t/2}\, e^{-\theta (t-s)}\,\EE|\,\sU_{u,v}^x(s)|\, \ds.
        \end{split}
        \label{eq: Hessian Pt first step}
    \end{equation}
    But the Burkholder-Davis-Gundy inequality~\cite[Chapter III, Theorem 3.1]{IkedaWatanabe1981}, together with equation~\eqref{eq: grad u X_t
      deterministic} of Proposition~\ref{prop: mean-square bounds} implies
    \begin{equation}\label{eq: I squared bound}
        \EE|I_{v}^{x}(t)|^2 \leq \frac{2}{t^2}\EE \int_{0}^{t}|\,\sU_v^x(s)|^{2}\, \ds \leq \frac{2}{t}|v|^2.
    \end{equation}
    Hence, applying~\eqref{eq: grad u X_t deterministic} and Jensen's inequality to the first term in~\eqref{eq: Hessian Pt first step}, and
    making use of~\eqref{eq: I squared bound} as well as \eqref{eq: grad u1 u2 X_t} of Proposition~\ref{prop: mean-square bounds}, we obtain 
    \begin{equation}
        \begin{split}
          |D^{2} P_{t}h(x)[u, v]|
          &\leq C_{1} \, |u|\,e^{-\theta t/2}\,\sqrt{\EE|I_{v}^{x}(t/2)|^{2}}
          + 2C_{1}C_{2}\frac{(1 - e^{-\theta t/2})}{\theta t}|u||v|\,e^{-\theta t/2},\\
            & \leq \frac{2C_{1}}{\sqrt{t}}\,\, e^{-\theta t/2}\,\,|u||v| + C_{1}C_{2}\, \, e^{-\theta t/2}\,\,|u||v|,
        \end{split}
    \end{equation}
    which establishes equation~\eqref{eq: Ptphi derivative 2} with $s_{-1/2}=2C_1$ and $s_0=C_1 C_2$.

    Finally we turn attention to \eqref{eq: Ptphi derivative 3}. Applying~\eqref{eq: Ptphi derivative 2}, followed
    by~\eqref{eq: grad u X_t deterministic} and~\eqref{eq: I squared bound} to $K_1^{x,t}$ we obtain 
    \begin{equation}\label{eq: final K1 bound}
      \begin{split}
        |K_1^{x,t}[u,v,w]| &\le \EE|D^2 P_{t/2} h(X_{t/2}^x)[\sU_u^x(t/2),\sU_v^x(t/2)]|\,|I_w^x(t/2)|,\\
        &\le S(t/2) e^{-\theta t/4}\,\EE |\,\sU_u^x(t/2)|\,|\,\sU_v^x(t/2)|\,|I_w^x(t/2)|, \\
        &\le S(t/2) e^{-\theta t/4}\, |u|\, |v|\,\EE |I_w^x(t/2)|, \\
        &\le \frac{2S(t/2)}{\sqrt{t}} e^{-\theta t/4}\, |u|\, |v|\,|w|.
      \end{split}
    \end{equation}
    While applying~\eqref{eq: Ptphi derivative 1}, followed by the Cauchy-Schwarz inequality and equations~\eqref{eq: I squared bound}
    and~\eqref{eq: grad u1 u2 X_t} to $K_2^{x,t}$ yields
    \begin{equation}\label{eq: final K2 bound}
        \begin{split}
          |K_{2}^{x,t}[u,v,w]|
          &\leq \EE\left|D P_{t/2}h(X_{t/2}^{x})[\,\sU_{u,v}^x(t/2)]\right| \,\left|I_{w}^{x}(t/2)\right|,\\
          &\leq C_{1} \, e^{-\theta t/2}\,\EE|\,\sU_{u,v}^x(t/2)|\,|I_{w}^{x}(t/2)|,\\
          &\leq C_{1} \, e^{-\theta t/2}\,\sqrt{\EE|\,\sU_{u,v}^x(t/2)|^2\,\EE|I_{w}^{x}(t/2)|^2},\\
          &\leq \frac{2C_1C_2}{\sqrt{t}} \, e^{-\theta t/2}\,|u|\, |v|\,|w|.
        \end{split}
    \end{equation}
    And applying~\eqref{eq: Ptphi derivative 1} and \eqref{eq: grad u1 u2 u3 X_t deterministic} to $K_3^{x,t}$ implies
    \begin{equation}\label{eq: final K3 bound}
      \begin{split}
        |K_{3}^{x,t}[u,v,w]|
        & \leq \frac{2}{t}\EE\int_{0}^{t/2}\left|D P_{t-s}h(X_{s}^{x})[\,\sU_{u,v,w}^x(s)]\right|\, \ds\\
        & \leq \frac{2}{t}\,C_1\,\int_{0}^{t/2} e^{-\theta (t-s)}\, \EE\left|\,\sU_{u,v,w}^x(s)\right|\, \ds\\         
        & \leq  C_{1}P(t/2)\left(\frac{2}{t}\int_{0}^{t/2}e^{-\theta(t-s)}\, \ds\right)|u|\,|v|\,|w|,\\
        & \leq  C_{1}P(t/2)\, e^{-\theta t/2}\, |u|\,|v|\,|w|.
      \end{split}
    \end{equation}
    where we have used the fact that $P(t)$ is increasing on $[0, \infty)$.
    While applying~\eqref{eq: Ptphi derivative 2} followed by~\eqref{eq: grad u X_t deterministic} and~\eqref{eq: grad u1 u2 X_t} yields
    \begin{equation}\label{eq: final K4 bound}
        \begin{split}
          |K_{4}^{x,t}[u,v,w]|
          &\leq \frac{2}{t}\EE\int_{0}^{t/2}\left|D^{2}P_{t-s}h(X_{s}^{x})[\,\sU_{u,v}^x(s),\,\sU_{w}^x(s)]\right|\, \ds,\\
          &\leq \frac{2}{t} \int_{0}^{t/2}S(t-s) e^{-\theta(t-s)/2}\, \EE\,|\,\sU_{u,v}^x(s)|\,|\,\sU_{w}^x(s) |\, \ds,\\
          &\leq C_2\,|u|\,|v|\,|w|\,\frac{2}{t} \int_{0}^{t/2}S(t-s) e^{-\theta(t-s)/2}\, \ds,\\
          &\leq C_2 S(t/2)\,|u|\,|v|\,|w|\,\frac{2}{t} \int_{0}^{t/2} e^{-\theta(t-s)/2}\, \ds,\\          
          &\leq C_2 S(t/2)\,|u|\,|v|\,|w|\,e^{-\theta t/4}.
        \end{split}
    \end{equation}
    Combining the estimates~\eqref{eq: final K1 bound}, \eqref{eq: final K2 bound}, \eqref{eq: final K3 bound} and~\eqref{eq: final K4
      bound} we obtain~\eqref{eq: Ptphi derivative 3} with $q_{-1}=4\sqrt{2}C_1$, $q_{-1/2}=2(2+\sqrt{2})C_1C_2$,
    $q_0=C_1(C_2^2+3M_1^2/2+M_2)$, $q_1=C_1(3M_1^2+M_2)/2$ and $q_2=3C_1M_1^2/8$.
\end{proof}

\backmatter
\bmhead{Acknowledgements}
  The authors thank Zihua Guo for useful conversations.
  This research was supported by and the Australian Research Council's Discovery Projects funding scheme (Project No. DP230102209).

\begin{appendices}

\section{Proof Proposition~\ref{prop: sde properties}}\label{sec: process and semigroup properties}
  \begin{lemma}\label{lemma: Cerrai's hypotheses}
    If $V$ satisfies Assumption~\ref{assumption: assumptions on V}, then the overdamped Langevin equation~\eqref{eq: overdamped langevin}
    satisfies Hypotheses 1.1, 1.2 and 1.3 of~\cite{Cerrai2001}.
  \end{lemma}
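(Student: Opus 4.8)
The plan is to unwind Hypotheses~1.1--1.3 of~\cite{Cerrai2001} for the specific equation~\eqref{eq: overdamped langevin}, in which the drift is $g:=-\grad V$ and the diffusion coefficient is the constant, invertible matrix $\sqrt{2}\,\id$, and then to verify each in turn directly from Assumption~\ref{assumption: assumptions on V}. Every condition on the diffusion coefficient is trivial: $\sqrt{2}\,\id$ is smooth with vanishing derivatives, globally Lipschitz, and uniformly nondegenerate since $(\sqrt{2}\,\id)(\sqrt{2}\,\id)^{\transpose}=2\,\id$. Thus, after discarding those, the content of the lemma reduces to two claims about the drift: (i) $g\in C^{3}(\RR^{d};\RR^{d})$ with $g$, $Dg$, $D^{2}g$ and $D^{3}g$ all of at most polynomial growth; and (ii) the one-sided Lipschitz (dissipativity) bound $\<g(x)-g(y),x-y\>\le\lambda|x-y|^{2}$ for some $\lambda\in\RR$. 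I would establish (ii) first.

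For (ii), note that the lower bound in~\eqref{eq: rho bounds} forces $\rho(x)\ge c_{1}|x|^{k}\ind(|x|\ge B)\ge 0$ for all $x$, so~\eqref{eq: Assumption 1} says precisely that $\hess V(x)\succeq 0$, i.e. $V$ is convex. Writing $g(x)-g(y)=-\int_{0}^{1}\hess V(y+t(x-y))(x-y)\,\dt$ and pairing with $x-y$ then gives $\<g(x)-g(y),x-y\>=-\int_{0}^{1}\<\hess V(y+t(x-y))(x-y),x-y\>\,\dt\le 0$, so (ii) holds with $\lambda=0$; equivalently, the infinitesimal form $\<Dg(x)h,h\>=-\<h,\hess V(x)h\>\le-\rho(x)|h|^{2}\le 0$ may be used.

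For (i), $V\in C^{4}$ immediately gives $g=-\grad V\in C^{3}$. The bound~\eqref{eq: rho bounds} also yields $\rho(x)\le c_{2}|x|^{k}$, so~\eqref{eq: Assumption 2} and~\eqref{eq: Assumption 3} directly give $|D^{2}g(x)[u,v]|\le M_{1}\frac{1+c_{2}|x|^{k}}{1+|x|}|u||v|$ and $|D^{3}g(x)[u,v,w]|\le M_{2}\frac{1+c_{2}|x|^{k}}{(1+|x|)^{2}}|u||v||w|$, both polynomially bounded. The bounds on $Dg=-\hess V$ and on $g$ itself are not immediate from Assumption~\ref{assumption: assumptions on V}, but I would recover them by integrating along the segment $[0,x]$, exactly as in the proof of Lemma~\ref{lemma: finite moments of mu}: from $\hess V(x)u=\hess V(0)u+\int_{0}^{1}D_{x}D_{u}\grad V(tx)\,\dt$ and~\eqref{eq: Assumption 2}, the substitution $s=t|x|$ bounds $\|\hess V(x)\|_{\op}$ by $\|\hess V(0)\|_{\op}+M_{1}\int_{0}^{|x|}\frac{1+c_{2}s^{k}}{1+s}\,\ds$, which is controlled by a polynomial in $|x|$; a further integration of this estimate bounds $|g(x)|=|\grad V(x)|$. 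When $k=0$, the extra hypothesis~\eqref{eq: Assumption 0} supplies the bound on $Dg$ directly. Together with uniform ellipticity of $\sigma\sigma^{\transpose}=2\,\id$ and the dissipativity above, this establishes all three hypotheses.

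The only step that is not pure bookkeeping is the last one: Assumption~\ref{assumption: assumptions on V} controls the \emph{second} and \emph{third} derivatives of $\grad V$ directly but says nothing a priori about the growth of $\grad V$ or $\hess V$ themselves, so the polynomial growth of $g$ and $Dg$ required by Cerrai's Hypothesis~1.1 has to be extracted by successive integration from the origin, with care taken over the $k=0$ case and over the elementary estimate of $\int_{0}^{|x|}s^{k}/(1+s)\,\ds$. I expect this to be the main (and rather modest) obstacle; everything else follows by inspection.
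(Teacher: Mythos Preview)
Your treatment of the polynomial-growth part (i) is correct and essentially matches the paper: the paper also reads off the bounds on $D^{2}g$ and $D^{3}g$ directly from~\eqref{eq: Assumption 2}--\eqref{eq: Assumption 3}, recovers the bound on $Dg$ by integrating $D^{2}g$ along $[0,x]$ (with~\eqref{eq: Assumption 0} covering $k=0$), and then integrates once more for $g$ itself. The observation $\langle Dg(x)h,h\rangle\le 0$ is also correct, but in Cerrai's scheme that is Part~3 of Hypothesis~1.1, not Hypothesis~1.2.

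The gap is your identification of Hypothesis~1.2. Because Hypothesis~1.1 records that $g$ grows polynomially of degree $k+1$, Hypothesis~1.2 is not the bare one-sided Lipschitz bound $\langle g(x)-g(y),x-y\rangle\le\lambda|x-y|^{2}$; it is a \emph{matching} dissipativity: one must produce constants $c>0$ and $a\ge 0$ with
\[
\langle g(x+u)-g(x),u\rangle \le -c\,|u|^{k+2} + a\bigl(1+|x|^{k+2}\bigr),\qquad x,u\in\RR^{d}.
\]
Your argument, which only uses $\rho\ge 0$, yields $\le 0$ on the left but $c=0$ on the right, which does not suffice. To obtain the required inequality you must exploit the full lower bound $\rho(x)\ge c_{1}|x|^{k}\ind(|x|\ge B)$ in~\eqref{eq: rho bounds}: writing $\langle g(x+u)-g(x),u\rangle=-\int_{0}^{1}\langle\hess V(x+tu)u,u\rangle\,\dt\le -c_{1}|u|^{2}\int_{0}^{1}|x+tu|^{k}\ind(|x+tu|\ge B)\,\dt$, one then splits into the two regimes $|u|\ge 4(|x|+B)$, where the indicator equals $1$ on $t\in[1/2,1]$ and the integral is at least $\tfrac{1}{2}(|u|/4)^{k}$, and $|u|<4(|x|+B)$, where the claimed right-hand side is already nonnegative. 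This case split is the actual content of the verification of Hypothesis~1.2 and is missing from your plan.
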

  Before proving Lemma~\ref{lemma: Cerrai's hypotheses}, we first apply it to prove Proposition~\ref{prop: sde properties}.
  \begin{proof}[Proof of Proposition~\ref{prop: sde properties}]
    In light of Lemma~\ref{lemma: Cerrai's hypotheses}, Part~\ref{prop_part: strong solution} follows from \cite[Theorem
      1.2.5]{Cerrai2001} and~\cite[Propositions 1.3.2, 1.3.3 and 1.3.4]{Cerrai2001} and Part~\ref{prop_part: flow derivatives} follows
    from~\cite[Proposition 1.3.5]{Cerrai2001}. Moreover parts~\ref{prop_part: solution to parabolic problem}, \ref{prop_part: semigroup
      spatial differentiability} and~\ref{prop_part: semigroup temporal differentiability} follow from \cite[Theorem 1.6.2]{Cerrai2001}  

   Finally, we consider Part~\ref{prop_part: well posed}. Since $V\in C^4(\RR^d;\RR)$, it immediately follows that $g=-\grad V$ is locally
   Lipschitz, and so~\cite[Theorem IV.3.1]{IkedaWatanabe1981} implies that~\eqref{eq: overdamped langevin} satisfies pathwise uniqueness.
   By a corollary of~\cite[Theorem
     IV.1.1]{IkedaWatanabe1981}, this then also implies uniqueness in the sense of probability law.
   Combined with Part~\ref{prop_part: strong solution}, which establishes the existence of a solution to equation~\eqref{eq: overdamped
     langevin} for each initial position $x$, this then implies that~\eqref{eq: overdamped langevin} is well posed.
  \end{proof}

  We now verify that the relevant hypotheses from~\cite{Cerrai2001} hold under Assumption~\ref{assumption: assumptions on V}.
  
  \begin{proof}[Proof of Lemma~\ref{lemma: Cerrai's hypotheses}]
    First observe that since the diffusion matrix in~\eqref{eq: overdamped langevin} is constant, it trivially follows that Hypothesis 1.3
    holds, as well as Part 2 of~\cite[Hypothesis 1.1]{Cerrai2001}.
    
    Now observe that for all $x\in\RR^d$ we have
    \begin{equation}
      1+\rho(x) \le (1+c_2)(1+|x|)^k,
    \end{equation}
    and consequently~\eqref{eq: Assumption 2} implies
    \begin{align}
        |D_{v}D_{u} g(x)| &\le M_1(1+c_2)(1+|x|)^{k-1}\,|u|\,|v|       \label{eq: Hyp1.1.1 j=2 a}\\
        &\le M_1(1+c_2)(1\vee 2^{k-2})(1+|x|^{k-1})\,|u|\,|v|,
      \label{eq: Hyp1.1.1 j=2 b}
    \end{align}
    and assumption~\eqref{eq: Assumption 3} implies
    \begin{equation}
      |D_{w}D_{v}D_{u} g(x)| \le M_2(1+c_2)(1\vee 2^{k-3})(1+|x|^{k-2})\,|u|\,|v|\,|w|.
      \label{eq: Hyp1.1.1 j=3}
    \end{equation}
    Suppose $k>0$. The fundamental theorem of calculus implies 
    \begin{equation}
      D_{u} g(x) = D_{u} g(0) + \int_0^1 D_x D_{u} g(tx)\, \diff t.
      \label{eq: ftc}
    \end{equation}
    Applying~\eqref{eq: Hyp1.1.1 j=2 a} to~\eqref{eq: ftc}, it then follows that
    \begin{equation}
      \begin{split}
        |D_{u} g(x)| &\le |D_{u} g(0)| + M_1(1+c_2)|u|\,\frac{(1+|x|)^k}{k}\\
        &\le |D_{u} g(0)| + M_1(1+c_2)\frac{(1\vee 2^{k-1})}{k} |u|\,(1+|x|^k),
      \end{split}
      \label{eq: grad u1 g bound}
    \end{equation}
    But
    \begin{equation}
      |D_{u}g(0)|\le\|\hess V(0)\|_{\mathrm{op}}\, |u|,
    \end{equation}
    and combining with~\eqref{eq: grad u1 g bound} then shows that whenever $k>0$, there exists $C\in(0,\infty)$ such that for all $x\in\RR^d$
    \begin{equation}
      |D_{u} g(x)| \le C(1+|x|^{k})\,|u|.
      \label{eq: Hyp1.1.1 j=1}
    \end{equation}
   Equation~\eqref{eq: Assumption 0} immediately implies~\eqref{eq: Hyp1.1.1 j=1} also holds in the case $k=0$, and so it in fact holds for any $k\ge0$.

   By a similar argument, combining the fundamental theorem of calculus with~\eqref{eq: Hyp1.1.1 j=1} yields
   \begin{equation}
     \begin{split}
     |g(x)| &\le |g(0)| +\int_0^1 C |x| (1+|tx|^k)\,\diff t \\
     &= (|g(0)|+ 2C)\,(1+|x|^{k+1}).
     \label{eq: Hyp1.1.1 j=0}
     \end{split}
   \end{equation}
   Combining~\eqref{eq: Hyp1.1.1 j=0}, ~\eqref{eq: Hyp1.1.1 j=1}, ~\eqref{eq: Hyp1.1.1 j=2 b} and ~\eqref{eq: Hyp1.1.1 j=3} shows that Part 1
   of~\cite[Hypothesis 1.1]{Cerrai2001} is satisfied.
   And since~\eqref{eq: Assumption 1} immediately implies $\<D g(x)[y],y\>\le 0$ for all $x,y\in\RR^d$, it follows that Part 3
   of~\cite[Hypothesis 1.1]{Cerrai2001} holds, and therefore Assumption~\ref{assumption: assumptions on V} indeed implies \cite[Hypothesis
     1.1]{Cerrai2001}. 

   Now fix $x\in\RR^d$. The fundamental theorem of calculus implies that for any $u\in\RR^d$ 
   \begin{equation}
     g(u+x)-g(x) = \int_0^1 D_u g(tu+x) \,\diff t
   \end{equation}
   and so by~\eqref{eq: Assumption 1}
   \begin{equation}
     \begin{split}
       \<g(u+x)-g(x),u\> &\le -c_1 \, |u|^2\,\int_0^1 |x+tu|^k \indicator(|x+tu|\ge B)\,\diff t.
       \label{eq: g(u+x)-g(x) bound}
       \end{split}
   \end{equation}
   But we now show that~\eqref{eq: g(u+x)-g(x) bound} in fact implies 
   \begin{equation}
     \<g(x+u)-g(x),u\>\le -\frac{c_1}{2^{2k+1}}\,|u|^{k+2}+ c_1 2^{k+4}(1+B^{k+2})(1+|x|^{k+2}).
     \label{eq: hypothesis 1.2}
   \end{equation}
   This will show that~\cite[Hypothesis 1.2]{Cerrai2001} holds.
   To show \eqref{eq: hypothesis 1.2} we consider two cases. First suppose that $|u|\ge 4(|x|+B)$. Then for all $t\in[1/2,1]$ we have $|x+tu|\ge B$, and hence~\eqref{eq:
     g(u+x)-g(x) bound} yields
   \begin{equation}
     \<g(x+u)-g(x),u\>\le -c_1\,|u|^2\,\int_{1/2}^1\left(\frac{|u|}{4}\right)^k\,\diff t = -\frac{c_1}{2^{2k+1}}\,|u|^{k+2}.
   \end{equation}
   Therefore~\eqref{eq: hypothesis 1.2} holds whenever $|u|\ge 4(|x|+B)$. Consider instead then $|u|< 4(|x|+B)$. By Jensen's inequality we
   have
   \begin{equation}
     |u|^{k+2} \le 2^{3k+5}(1+B^{k+2})(1+|x|^{k+2})
   \end{equation}
   and so
   \begin{equation}
     -\frac{1}{2^{2k+1}}|u|^{k+2} + 2^{k+4}(1+B^{k+2})(1+|x|^{k+2}) \ge 0.
   \end{equation}
   Since the right-hand side of~\eqref{eq: g(u+x)-g(x) bound} is non-positive, it then follows that
   \begin{equation}
     \<g(x+u)-g(x),u\>\le 0 \le -c_1\frac{1}{2^{2k+1}}|u|^{k+2} + c_1\,2^{k+4}(1+B^{k+2})(1+|x|^{k+2}).
   \end{equation}
   This establishes \eqref{eq: hypothesis 1.2}. We conclude that Assumption~\ref{assumption: assumptions on V} indeed implies that
   Hypotheses 1.1, 1.2, 1.3 of~\cite{Cerrai2001} all hold.
  \end{proof}
    
\end{appendices}


\end{document}